\newtheorem{theorem}{Theorem}[section]
\newtheorem{lemma}[theorem]{Lemma}
\newtheorem{corollary}[theorem]{Corollary} 
\newtheorem{proposition}[theorem]{Proposition}
\newtheorem{intthm}{Theorem}
\theoremstyle{definition}
\newtheorem{definition}[theorem]{{Definition}}
\newtheorem{example}[theorem]{Example}
\newtheorem{remark}[theorem]{Remark}
\newtheorem{chunk}[theorem]{\hspace*{-.77ex}\bf}
\newtheorem{fact}[theorem]{\bf Fact}
\newtheorem*{chunk*}{}
\numberwithin{equation}{theorem}
\def\p{{\frak p}}
\def\m{{\frak m}}
\def\H{\operatorname{H}}
\def\Hom{\operatorname{Hom}}
\def\ker{\operatorname{ker}}
\def\Coker{\operatorname{Coker}}
\newcommand{\Spec}{\operatorname{Spec}}
\def\Cl{\operatorname{Cl}}
\def\rad{\operatorname{rad}}
\def\dim{\operatorname{dim}}
\def\nl{\operatorname{nil}}
\def\ann{\operatorname{ann}}
\def\ker{\operatorname{ker}}
\def\Coker{\operatorname{Coker}}
\def\p{\mathfrak{p}}
\newcommand{\ol}{\overline}
\newcommand{\Ht}{\operatorname{ht}}
\newcommand{\pd}{\operatorname{pd}}
\newcommand{\fp}{\mathfrak p}
\newcommand{\n}{\mathfrak n}
\newcommand{\fa}{\mathfrak a}
\newcommand{\fm}{\mathfrak m}
\newcommand{\supp}{\operatorname{Supp}}
\newcommand{\abarhat}{\widehat{\bar A}}
\newcommand{\dual}[2]{#1^{#2}}
\newcommand{\ddual}[2]{#1^{#2#2}}
\newcommand{\pdual}[2]{(#1)^{#2}}
\newcommand{\pddual}[2]{(#1)^{#2#2}}
\newcommand{\bidual}[2]{\sigma^{#1}_{#2}}
\newcommand{\cl}[1]{[#1]}
\newcommand{\fb}{\mathfrak b}
\newcommand{\vf}{\varphi}
\renewcommand{\ker}{\operatorname{Ker}}
\newcommand{\into}{\hookrightarrow}
\begin{document}
\bibliographystyle{amsplain}

\title
{Torsion in kernels of induced maps on divisor class groups}

\author[S.\ Sather-Wagstaff]{Sean Sather-Wagstaff}
\address{Sean Sather-Wagstaff, Department of Mathematics,
NDSU Dept \# 2750,
PO Box 6050,
Fargo, ND 58108-6050
USA}

\email{sean.sather-wagstaff@ndsu.edu}

\urladdr{http://www.ndsu.edu/pubweb/\~{}ssatherw/}

\author{Sandra Spiroff}
\address{Sandra Spiroff, Department of Mathematics, 
305 Hume Hall,
P.~O.~Box 1848,
University of Mississippi, University, MS  38677-1848, USA}
\email{spiroff@olemiss.edu}
\urladdr{http://home.olemiss.edu/\~{ }spiroff/}

\begin{abstract}  We investigate torsion elements in the kernel of the map on divisor class groups of excellent local normal domains $A$ and $A/I$, for an ideal $I$ of finite projective dimension.  The motivation for this work is a result of Griffith-Weston which applies when $I$ is principal.
\end{abstract}

\keywords{Divisor class groups, purity of branch locus, $S_2$-ification}

\subjclass[2010]{13B22, 13B40, 13C20, 13D05, 13F40}

\thanks{Sean Sather-Wagstaff was supported in part by a grant from the NSA}

\maketitle



\section*{Introduction}

In this section, let $A\to B$ be a homomorphism of noetherian normal integral domains.
Under certain circumstances, one can show that such a map 
induces a group homomorphism 
$\Cl(A) \to \Cl(B)$ on divisor class groups given by $\cl\fa\mapsto\cl{(\frak a \otimes_A B)^{BB}}$.
(See Section~\ref{sec01} for definitions and background material.) 
For instance, if $A$ is excellent and local, $t$ is a prime element of $A$ such that $A/tA$ satisfies the regularity condition $(R_1)$,
and $B$ is the integral closure of $A/tA$, then it is well-known that the natural map $A\to B$ induces such a homomorphism. 
In this setting, the kernel of this induced map has been studied by V.I.~Danilov, P.~Griffith, J.~Lipman, C.~Miller,  S.~Spiroff, D.~Weston, and others. 
For instance, Griffith and Weston prove the following:

\begin{intthm}  \label{GW} \cite[Theorem 1.2]{GW} Let $(A, \frak m)$ be an excellent, local, normal domain and let $t$ be a principal prime in $\frak m$ such that $A/tA$ satisfies the condition $(R_1)$.  Let $B$ denote the integral closure of $A/tA$ and let $e >1$ be an integer which represents a unit in $A$.  Then the kernel of the homomorphism $\Cl(A) \to \Cl(B)$ contains no element of order $e$.
\end{intthm}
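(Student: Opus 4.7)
The plan is to use the classical technique of cyclic covers associated to torsion divisor classes. Suppose for contradiction that $\alpha \in \ker(\Cl(A)\to\Cl(B))$ is an element of order exactly $e$. Write $\alpha = \cl{\mathfrak a}$ for a divisorial (height-one) ideal $\mathfrak a$ of $A$, and choose a principal generator $\mathfrak a^{(e)} = fA$; the kernel hypothesis then yields $(\mathfrak a \otimes_A B)^{BB} = gB$ for some $g \in B$.

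First I would construct the associated cyclic cover $R = \bigoplus_{i=0}^{e-1} \mathfrak a^{(i)} s^i$ of $A$, with multiplication rule $s^e = f$. Because $A$ is normal, the ideal $\mathfrak a$ is invertible at every height-one prime, and a direct computation at each such prime shows that $A \to R$ is \'etale in codimension one once $e\in A^{\times}$ is used. Standard arguments then give that $R$ is a finite, normal $A$-algebra, and the fact that $\alpha$ has order \emph{exactly} $e$ ensures that $R$ is an integral domain rather than a product of smaller cyclic covers; thus $A \to R$ is a nontrivial $\mu_e$-cover.

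Next I would base change this cover along $A \to B$. Using the principalization $(\mathfrak a\otimes_A B)^{BB} = gB$, the relation $s^e = f$ translates, up to a unit in $B$, into $s^e = u\,g^e$, so after rescaling $s$ by $g$ the reflexive pullback becomes $B[y]/(y^e - u)$ for a unit $u \in B$. The goal is then to argue that $u$ is an $e$-th power in $B$, which would exhibit the pulled-back cover as a trivial $\mu_e$-torsor, i.e., a product of $e$ copies of $B$.

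The main obstacle is the descent step: extracting from a trivialization of the cover over $B$ the conclusion that the original cover $A\to R$ must already be trivial, which would contradict $\alpha$ having order $e$. For this I would combine several ingredients: that $t$ lies in the Jacobson radical of the local ring $A$, the hypothesis that $A/tA$ satisfies $(R_1)$ (making $B$ a finite birational extension of $A/tA$), Kummer theory for $\mu_e$ (available since $e\in A^{\times}$), and a Nagata- or Hensel-type lifting argument to transport information across the principal prime $t$. Producing the correct descent machinery---making precise how a splitting of the cyclic cover after reduction modulo $t$ forces splitting of the cover itself---is where the real work lies and where the specific assumptions on $t$ and on $A/tA$ must be brought to bear.
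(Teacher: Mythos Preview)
This statement is Theorem~A in the paper, and it is \emph{cited} from Griffith--Weston rather than proved here. Indeed, the introduction explicitly points out that the published Griffith--Weston argument contains an unverified step: after forming the cyclic cover $S=\bigoplus_{i=0}^{e-1}\mathfrak a^{(i)}$ and passing to the integral closure of $S/tS$, they tacitly assume this integral closure is \emph{local} in order to run the purity-of-branch-locus and rank-comparison endgame. The present paper does not repair that gap in full generality; it recovers only the special case where $A/tA$ itself is normal (Corollary~\ref{cor0401}), as a consequence of Theorem~\ref{thmtwo}, whose proof establishes locality of the $S_2$-ification via the Hochster--Huneke connectedness criteria (Fact~\ref{HH} and Lemmas~\ref{lem1}--\ref{prop1}).

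Your proposal shares the opening moves with both Griffith--Weston and this paper: form the cyclic cover $R$, check it is a normal domain \'etale over $A$ in codimension one. After that, however, you head in a different direction. The Griffith--Weston/paper strategy is to take the integral closure (equivalently, $S_2$-ification) of $\bar R=R/tR$, show it is free over $\bar A$ and \'etale in codimension one, invoke purity of branch locus (which needs the integral closure to be a \emph{local} domain) to get an honest \'etale extension, and finally use a separably closed residue field to force the rank $e$ to equal $1$. You instead base change to $B$, obtain $B[y]/(y^e-u)$ for a unit $u$, and propose to (a) show $u$ is an $e$-th power, hence the cover splits over $B$, and then (b) descend the splitting to $A$.

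Both of these steps are genuine gaps, not just unfinished details. For (a), nothing in the hypotheses makes $u$ an $e$-th power; all you know is that $[\mathfrak a]$ dies in $\Cl(B)$, which is already encoded in the fact that $u$ is a \emph{unit}---its being an $e$-th power is additional and unproved. For (b), even if the cover did split over $B$, the descent you envision amounts to injectivity of the map on $\mu_e$-torsors induced by $A\to B$, which is a cohomological restatement of the very theorem you are trying to prove; Kummer theory and Hensel-type lifting across the prime $t$ do not supply this on their own. In particular, your outline never confronts the issue the paper isolates as the crux---whether the reduced cover is local---and the ``real work'' you flag at the end is not a technicality but the entire content of the result.
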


A careful analysis of their intricate argument reveals an overlooked detail. Specifically, they suppose that $\cl\fa$ is an element of order $e$ in 
$\ker(\Cl(A) \to \Cl(B))$, and let $a\in A$ be such that $\fa^{(e)}=aA$. 
They describe an
$A$-algebra structure on the direct sum $S=\oplus_{i=0}^{e-1}\fa^{(i)}$ that makes $S$ isomorphic to the integral closure of
$A[T]/(T^e-a)$.  They then show that the integral closure of $S/tS$ is \'etale over $B$ and conclude that $e=1$.
However, in their proof of the last step, they assume that the integral closure of $S/tS$ is local. 
While this may be true in general, we are only certain of it when $A/t A$ is normal, by a result of M.~Hochster and C.~Huneke~\cite[(3.9) Proposition c)]{HH},
once one interprets the integral closure of $S/tS$ as the $S_2$-ification of $S/tS$.

Using the point of view of $S_2$-ifications, we obtain the theorem below. Its proof is the content of Section~\ref{sec130110a}.
This result allows us to obtain the special case of Theorem~\ref{GW} where $A/tA$ is normal; see Corollary~\ref{cor0401}.

\begin{intthm}  \label{thmtwo} Let $(A, \frak m)$ be an excellent, local, normal domain and $I$ a prime ideal of $A$ with finite projective dimension such that:
\begin{enumerate}
\item[(i)] $\bar A = A/I$ is normal; 
\item[(ii)] $I$ is a complete intersection on the punctured spectrum of $A$, i.e.,
for each prime ideal $\frak p \neq \frak m$, the localization $I_{\frak p}$ is either equal to $A_{\frak p}$ or generated by a regular sequence in $A_{\fp}$; and
\item[(iii)] $\mu(I) \leq \dim A-2$, where $\mu(I)$ is the minimal number of generators of $I$.
\end{enumerate}
Then for any positive integer $e > 1$ which represents a unit in $A$, the kernel of the homomorphism $\Cl(A) \to \Cl(\bar A)$ contains no elements of order $e$.
\end{intthm}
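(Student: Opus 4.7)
The plan is to adapt the Griffith--Weston strategy to higher codimension, using the $S_2$-ification viewpoint advocated in the introduction. Assume toward contradiction that $\cl{\fa}$ has order exactly $e>1$ in $\ker(\Cl(A)\to\Cl(\bar A))$ with $e$ a unit in $A$, and fix $a\in A$ with $\fa^{(e)}=aA$. The first step is to form the cyclic cover $S:=\bigoplus_{i=0}^{e-1}\fa^{(i)}$, equipped with the $A$-algebra structure coming from divisorial multiplication of powers of $\fa$ (using the identification $\fa^{(e)}=aA$ to reduce indices modulo $e$). Standard Kummer-style arguments, available because $e$ is a unit in $A$, realize $S$ as the integral closure of $A[T]/(T^e-a)$, hence as a normal, module-finite, reflexive $A$-algebra that is generically Galois of degree $e$ with group $\mathbb Z/e$.

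Next I would study the reduction $R:=S/IS$ and its $S_2$-ification $C$ inside the total ring of fractions of $R$. Hypotheses (i)--(iii) are designed to transfer to $IS\subseteq S$: finite projective dimension of $I$ over $A$ yields finite projective dimension of $IS$ over $S$, the punctured-spectrum complete intersection condition is preserved by the module-finite extension $A\to S$, and $\mu(I)\leq\dim A-2$ gives $\dim \bar A\geq 2$ (hence also $\dim C \geq 2$ on each component). Since $\bar A$ is normal, the result of Hochster--Huneke \cite[(3.9)c)]{HH} identifies $C$ with the integral closure of $R$ and exhibits $C$ as a finite product of normal local domains module-finite over $\bar A$.

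The heart of the argument is to show $\bar A\to C$ is \'etale. The kernel hypothesis forces $(\fa\bar A)^{\bar A\bar A}$ to be principal, say equal to $\bar b\bar A$; taking $e$-th symbolic powers then yields $\bar a=u\bar b^e$ for some unit $u\in\bar A$. Consequently the Kummer polynomial becomes $T^e-\bar a=T^e-u\bar b^e$, which is separable over $\bar A$ (since $e$ is a unit) and splits completely after an unramified extension of $\bar A$ obtained by adjoining a $u^{1/e}$ together with the $e$-th roots of unity. Tracing this splitting through the module decomposition of $S/IS$ and the $S_2$-ification, I would conclude that $\bar A\to C$ is \'etale; in fact it pulls back to the trivial cover $\bar A^{\oplus e}$ after a finite \'etale base change.

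Finally, one derives the contradiction. The map $A\to S$ is unramified at every height-one prime of $A$ not containing $a$. Combining this with the \'etaleness of $\bar A\to C$ along the closed subscheme $V(I)\subset\Spec A$, one invokes a purity of branch locus argument, analogous to the one in \cite{GW} but supported by the hypotheses that $I$ has finite projective dimension and $\mu(I)\leq\dim A-2$, to propagate \'etaleness from $\bar A\to C$ back to $A\to S$ at every height-one prime, including those containing $a$. Purity then forces $A\to S$ to be globally \'etale, which is incompatible with the nontrivial Kummer ramification unless $e=1$. The main obstacle I anticipate is Step 3: verifying rigorously that the generic Kummer factorization is faithfully recorded by the $S_2$-ification $C$ after base change, and this is precisely where the finite-projective-dimension and punctured-spectrum complete intersection hypotheses should be used to ensure compatibility of reflexive hulls with reduction mod $I$.
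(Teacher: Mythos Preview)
Your outline follows the Griffith--Weston shape, but there are two genuine gaps.

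First, the decisive issue is whether the $S_2$-ification $C$ of $S/IS$ is \emph{local}. You allow $C$ to be a finite product of normal local domains, but then your contradiction evaporates: if $C\cong\bar A^{\,e}$ as rings there is nothing wrong with $\bar A\to C$ being \'etale. The paper spends Section~\ref{sec130110a} on exactly this point. After reducing (Lemmas~\ref{Acomplete}--\ref{ksepclosed}) to $A$ complete with separably closed residue field and containing a primitive $e$-th root of unity, one must show that $\Spec^{\circ}(\bar R_{\bar P})$ is connected for every prime $\bar P$ of height $\geq 2$ in $\bar R=S/IS$. This is where hypotheses (ii) and (iii) are actually used: for non-maximal $P$, condition (ii) makes $IR_P$ generated by part of a system of parameters of $R_P$, so Hochster--Huneke's connectedness theorem \cite[Theorem 3.9(c)]{HH} applies; for maximal $P$, condition (iii) gives $\mu(IR)\leq\dim R-2$, so \cite[Theorem 3.3]{HH} applies. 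Your sentence that hypotheses (i)--(iii) ``transfer to $IS\subseteq S$'' gestures at this but does not carry it out, and your invocation of \cite[(3.9)c)]{HH} is misplaced: that result is the connectedness criterion, not an identification of $C$ with an integral closure. You also locate the role of (ii)--(iii) in ``compatibility of reflexive hulls with reduction mod $I$,'' but that is not where they enter; they enter solely through connectedness.

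Second, your final contradiction is not valid. There is no purity theorem that ``propagates \'etaleness from $\bar A\to C$ back to $A\to S$'' across a closed subscheme of positive codimension, and in any case $A\to S$ is \emph{already} \'etale at every height-one prime, including those containing $a$ (Lemma~\ref{AtoRetalecodimone}; this uses no kernel hypothesis). Purity for $A\to S$ would require $S$ to be $A$-free, i.e.\ $\cl\fa$ trivial, which is what you are trying to prove. The paper's contradiction stays entirely on the $\bar A$ side: once $C$ is local, one shows $C$ is $\bar A$-free of rank $e$ (this is where the kernel hypothesis enters, Lemma~\ref{barRfree}), \'etale in codimension one over $\bar A$, hence \'etale by purity (Fact~\ref{purity130108a}); since $C$ is local and the residue field of $\bar A$ is separably closed, the residue-field extension forces $e=1$. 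The reduction to separably closed residue field, which you omit, is essential for this last step.
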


A significant tool in the proof of Theorem \ref{thmtwo} is the next result, the proof of which is the content of Section~\ref{sec02}.

\begin{intthm}  \label{thmone} Let $(A, \frak m)$ be a complete, local, normal domain such that $A/\frak m$ is separably closed and let $I$ be a prime ideal of $A$ with finite projective dimension such that $\bar A = A/I$ is normal.  Let $e >1$ be an integer which represents a unit in $A$ and assume that $A$ contains a primitive $e$-th root of unity. Let $[\frak a]$ be an element in $\Cl(A)$ with order $e$.   Set $\displaystyle{R = A \oplus \frak a \oplus \frak a^{(2)} \oplus \dots \oplus \frak a^{(e-1)} }$.  If any of the equivalent conditions of Fact \ref{HH} hold for $R/IR$, then $[\frak a] \notin \ker(\Cl(A) \to \Cl(\bar A))$. 
\end{intthm}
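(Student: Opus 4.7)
The plan is to argue by contradiction, following the corrected Griffith--Weston strategy outlined in the introduction, with $S_2$-ifications replacing integral closures. Suppose $[\mathfrak a]\in\ker(\Cl(A)\to\Cl(\bar A))$ has order exactly $e>1$. Since $A$ is a normal local domain, $\mathfrak a^{(e)}$ is principal; fix $a\in A$ with $\mathfrak a^{(e)}=aA$. Because $e$ is a unit in $A$ and $A$ contains a primitive $e$-th root of unity, the standard Kummer construction equips $R=\bigoplus_{i=0}^{e-1}\mathfrak a^{(i)}$ with a $\mathbb Z/e$-graded cyclic Galois $A$-algebra structure, identifying $R$ with the normalization of $A[X]/(X^e-a)$. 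In particular $R$ is a normal domain, finite over $A$, free of rank $e$ on the locus where $a$ is a unit, and $A\to R$ is étale in codimension one.

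The heart of the argument is the reduction modulo $I$. Let $S$ denote the $S_2$-ification of $R/IR$. The equivalent conditions of Fact~\ref{HH} are invoked here in two ways: first, to give a workable description of $S$ as a finite $\bar A$-module with good base-change properties (the finite projective dimension of $I$ combined with these conditions is what allows $S_2$-ification to commute with the relevant reflexive hulls), and second, to ensure that $S$ is \emph{local}. That local hypothesis is exactly the ingredient that was missing in the original Griffith--Weston argument.

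With $S$ local in hand, I would next show $S$ is étale over $\bar A$ of rank $e$. Since $[\mathfrak a]$ lies in the kernel, the reflexive hull of $\mathfrak a\otimes_A\bar A$ is principal in $\bar A$; the same then holds for each $\mathfrak a^{(i)}\otimes_A\bar A$, giving $S\cong\bar A^{\,e}$ as a $\bar A$-module. On the locus where $\bar a$ is a unit, the reduction $X^e-\bar a$ of the Kummer equation is separable (its discriminant $\pm e^e\bar a^{\,e-1}$ is a unit, since $e$ is a unit in $A$), so $\Spec S\to\Spec\bar A$ is étale in codimension one. Because $\bar A$ is normal, Zariski--Nagata purity of the branch locus upgrades this to étaleness on all of $\Spec\bar A$. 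Finally, $\bar A$ is a complete local normal domain with separably closed residue field, so every finite connected étale $\bar A$-algebra is isomorphic to $\bar A$ itself (by the triviality of the étale fundamental group, via Hensel's lemma). Since $S$ is local and étale of rank $e$ over $\bar A$, this forces $S=\bar A$ and $e=1$, contradicting $e>1$.

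The step I expect to be the main obstacle is the identification of $S$ as an étale $\bar A$-algebra of rank $e$: one has to coordinate the finite projective dimension of $I$ (to control how reflexivity and $S_2$-ification interact with reduction modulo $I$), the codimension-one freeness of $R$ over $A$, and the local structure of $S$ provided by Fact~\ref{HH}, in order to legitimately invoke purity of the branch locus over a $\bar A$ that need not be Cohen--Macaulay. Once that is in place, the contradiction via the separably closed residue field is essentially formal.
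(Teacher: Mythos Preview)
Your outline is correct and matches the paper's strategy step for step: contradiction, Kummer cover $R$, $S_2$-ification $S$ of $\bar R=R/IR$ (local by hypothesis), identification $S\cong\bar A^{\,e}$ via the kernel assumption, \'etaleness in codimension one, purity (in the Auslander--Buchsbaum form for free module-finite extensions, Fact~\ref{purity130108a}), and the residue-field contradiction.

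One technical point deserves care. Your sketch of ``\'etale in codimension one'' only treats height-one primes where $\bar a$ is a unit, but there can be height-one primes of $\bar A$ (or of $S$) containing $\bar a$. The paper handles this upstream: using the Approximation Theorem it produces $u=b^e/a$ with $v_{\fp_i}(u)=0$ at the height-one primes $\fp_i$ in the primary decomposition of $\fa$, so that at \emph{every} regular localization $A_\fp$ one of $T^e-a$ or $T^e-u$ is separable and $A_\fp\to R_\fp$ is \'etale (Lemma~\ref{AtoRetalecodimone}). Then finite projective dimension of $I$ forces $A_\fp$ to be regular whenever $\bar A_{\bar\fp}$ is, so base change gives $\bar A_{\bar\fp}\to\bar R_{\bar\fp}$ \'etale for all height-one $\bar\fp$ (Lemma~\ref{barAtobarRetalecodimone}). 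You should route your argument through this rather than through the locus $\{\bar a\text{ unit}\}$ alone.
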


Note that the $A$-algebra structure on $R$ is described explicitly in paragraph~\ref{assumptions}, and that Lemma~\ref{barRequidim} shows that $R/IR$ satisfies the
hypotheses of Fact \ref{HH}. Also, the proofs of Theorems~\ref{thmtwo} and~\ref{thmone} use ideas from \cite[Theorem 1.2]{GW}, but with some key changes.  For completeness and clarification, we provide the details.  Because of their length and technicality, the proofs are presented in a series of lemmas.  
We conclude the paper with a few corollaries and an example in Section~\ref{sec130110b}.

Finally, we mention that a thorough discussion of the background material and list of terminology is given Section~\ref{sec01} for the reader's convenience.  We provide useful definitions and results on divisor class groups,  $S_2$-ifications, and unramified/\'etale extensions.
Undefined terms can be found in~\cite{M}.

\section{Background} \label{sec01}

{\it Throughout this paper, we assume that all rings are commutative and noetherian.}


\subsection*{Divisor Class Groups}

\

We begin with our working definition of the divisor class group of a  normal domain.
It  can be found in J. Lipman \cite[\S 0]{L} and is 
equivalent to the classical additive definition of the divisor class 
group appearing in N. Bourbaki \cite[VII \S 1]{B} and R. Fossum \cite[\S 6]{F}.  A  discussion of this equivalence appears in \cite[2.10]{SSW}.  

\begin{definition} \label{defn0101}
Let $A$ be a  normal domain and $M$ a
finitely generated $A$-module.  The \emph{dual} of $M$ is 
$\dual{M}{A}=\Hom_A(M, A)$ and the \emph{double dual} of $M$ is 
$\ddual{M}{A} = \pdual{\dual{M}{A}}{A}$.  The natural \emph{biduality map}
$\bidual{A}{M}\colon M \to \ddual{M}{A}$ is the $A$-module homomorphism 
given by $\bidual{A}{M}(m)(g)=g(m)$  for all $m \in M$ and all $g \in \dual{M}{A}$.  
We say that $M$ is \emph{reflexive} if $\bidual{A}{M}$ is an isomorphism.  
\end{definition}

\begin{remark} \label{rmk0101}  
As in our previous work \cite{SWS}, we use the notation $\dual{M}{A}$ for the dual of an $A$-module $M$ in order to avoid ambiguity when we work with
two or more rings simultaneously.  This replaces the notation $M^*$ from the classical literature.
\end{remark}
 
\begin{definition} \label{defn0102}The \emph{divisor class group} of a normal domain $A$, denoted $\Cl(A)$, is the group 
of isomorphism classes of reflexive $A$-modules of rank one, or equivalently, non-zero reflexive ideals of $A$.  
An element $\cl\fa \in \Cl(A)$ is called a \emph{divisor class}, and multiplication is
defined by $\cl\fa \cdot \cl\fb = \cl{\pddual{\fa \otimes_A \fb}{A}}$.
The identity element is $\cl A$, and the inverse of
$\cl{\fa}$ is $\cl{\dual{\fa}{A}}$.

As $A$ is normal, it satisfies the Serre conditions $(R_1)$ and $(S_2)$.  Thus, each reflexive  ideal $\fa$ can be written uniquely as the primary 
decomposition $\fa=\bigcap_{j=1}^s \fp_j^{(e_j)}$, where the $\fp_j$ are 
the height one prime ideals containing $\fa$.  For each positive integer $d$, we set
$\fa^{(d)}=\bigcap_{j=1}^s \fp_j^{(e_jd)}$. In $\Cl(A)$, this definition implies that $d\cl{\fa}=\cl{\fa^{(d)}}$.
\end{definition}

\begin{fact}
Let $A\to B$ be a  homomorphism of finite flat dimension between normal domains.
(For instance, if $I$ is a prime ideal of $A$ with finite projective dimension such that $A$ and $A/I$ are normal domains,
then we can consider the natural surjection $A\to A/I$.)
Then~\cite[Theorem A]{SSW}  provides a well-defined group homomorphism  $\Cl(A) \to \Cl(B)$ given by $[\frak a] \mapsto [(\frak a \otimes_A B)^{BB}]$. 
\end{fact}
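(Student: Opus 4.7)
The plan is to verify three items: that $(\fa\otimes_A B)^{BB}$ represents a well-defined element of $\Cl(B)$ for each $[\fa]\in\Cl(A)$; that the construction is functorial on isomorphism classes and sends $[A]\mapsto[B]$; and that it respects the product in divisor class groups. The first two points are relatively direct. Since $B$ is normal, the double $B$-dual of any finitely generated $B$-module is automatically reflexive over $B$, so $(\fa\otimes_A B)^{BB}$ is a reflexive $B$-module. Rank one is preserved under tensor product and double dualization, as one sees by localizing at the generic point of $B$, giving a well-defined class in $\Cl(B)$. Functoriality of tensor product and of double dualization implies independence of the choice of representative ideal in its isomorphism class, and $(A\otimes_A B)^{BB} = B^{BB} \cong B$ gives the identity.

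The substantive step is multiplicativity: for rank-one reflexive $A$-modules $\fa$ and $\fb$, one must exhibit a natural isomorphism
\[
\bigl(((\fa\otimes_A\fb)^{AA})\otimes_A B\bigr)^{BB} \cong \bigl((\fa\otimes_A B)^{BB}\otimes_B(\fb\otimes_A B)^{BB}\bigr)^{BB}.
\]
Both sides are rank-one reflexive, hence $(S_2)$, $B$-modules, so it suffices to check that their associated Weil divisors agree on $\Spec B$---equivalently, that their localizations agree at every height-one prime $\fq$ of $B$. At such a $\fq$, $B_\fq$ is a DVR by the $(R_1)$-property of the normal domain $B$, both sides become free of rank one, and equality can then be read off from valuations.

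The main obstacle is controlling the interaction between double dualization and base change. Without the finite-flat-dimension hypothesis, tensoring with $B$ can destroy reflexivity even in codimension one, invalidating the comparison above. The finite flat dimension assumption ensures that $-\otimes_A B$ preserves the essential homological content, so that localization at each height-one prime of $B$ yields the expected free modules and so that the double-dualization on each side commutes, up to canonical isomorphism, with the relevant extension of scalars. Beyond this key point, the proof reduces to careful bookkeeping with biduality maps and valuations, precisely in the style of the argument given in \cite[Theorem A]{SSW}.
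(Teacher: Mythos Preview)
The paper does not prove this statement at all: it is recorded as a \emph{Fact} and attributed wholesale to \cite[Theorem~A]{SSW}. There is thus no argument in the present paper to compare your proposal against. Your sketch is a reasonable outline of how one would establish such a result, and indeed it parallels the shape of the argument in the cited reference: reduce multiplicativity to a comparison of reflexive rank-one modules at height-one primes of $B$, and use the finite-flat-dimension hypothesis to control base change there. If anything, your final paragraph is somewhat impressionistic about exactly how finite flat dimension is used; in the reference the key point is that at a height-one prime $\fq$ of $B$ one can arrange the relevant $A$-modules to behave well after localization and base change (so that biduality commutes with $-\otimes_A B$ up to the expected isomorphism), but your sketch already flags this as the crux. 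For the purposes of this paper, the correct ``proof'' is simply the citation.
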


We complement this with the following.

\begin{lemma}
Let $A$ be a normal domain, and let $I$ be a prime ideal of $A$ such that $\bar A=A/I$ is excellent and satisfies $(R_1)$.
If $B$ is the integral closure of $\bar A$, then  the natural ring homomorphism $\vf\colon A\to B$
induces a well-defined group homomorphism  $\Cl(A) \to \Cl(B)$ given by $[\frak a] \mapsto [(\frak a \otimes_A B)^{BB}]$. 
\end{lemma}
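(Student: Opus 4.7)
The plan is to verify the three standard requirements: that $B$ is a noetherian normal domain (so that $\Cl(B)$ is defined), that the assignment $[\fa] \mapsto [(\fa \otimes_A B)^{BB}]$ yields a well-defined set map $\Cl(A) \to \Cl(B)$, and that this map respects the group law.

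First I would verify that $B$ is a noetherian normal domain. Since $\bar A$ is excellent, it is a Nagata ring, so the integral closure $B$ is module-finite over $\bar A$ and hence noetherian. As the integral closure of a domain in its fraction field, $B$ is a normal domain.

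Next, for a reflexive rank-one ideal $\fa$ of $A$, I would show that $(\fa \otimes_A B)^{BB}$ is a reflexive rank-one $B$-module, hence represents a class in $\Cl(B)$. Reflexivity is automatic from double-dualization over the normal noetherian domain $B$. The rank-one condition is the key technical point: the $(R_1)$ hypothesis on $\bar A$ ensures that at each height-one prime $\q$ of $B$ one has $B_\q = \bar A_{\q \cap \bar A}$, a DVR, and that the height-one primes of $B$ correspond bijectively to those of $\bar A$. That the class $[(\fa \otimes_A B)^{BB}]$ depends only on $[\fa]$ is immediate from the functoriality of $\otimes_A B$ and $(-)^{BB}$.

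Finally, for the group homomorphism property, I would verify the identity
\[
[((\fa \otimes_A \fb)^{AA} \otimes_A B)^{BB}] = [((\fa \otimes_A B)^{BB} \otimes_B (\fb \otimes_A B)^{BB})^{BB}]
\]
in $\Cl(B)$. Since $B$ is normal, this reduces to a check at height-one primes of $B$, where reflexive rank-one modules are invertible and the comparison is straightforward. The main obstacle is the rank-one verification for the image in the previous step; the argument is modeled on \cite[Theorem A]{SSW}, but must be adapted to the present setting where $A \to B$ need not have finite flat dimension, with the $(R_1)$ hypothesis on $\bar A$ and the identification of $B$ with $\bar A$ at height-one primes substituting for that absent condition.
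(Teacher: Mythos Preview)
Your plan has a genuine gap in the rank-one step. Observing that $B_\q\cong\bar A_{\q\cap\bar A}$ is a DVR for a height-one prime $\q$ of $B$ tells you only that $((\fa\otimes_A B)^{BB})_\q$ is free over $B_\q$; it does not tell you the rank is one. Unwinding, $(\fa\otimes_A B)_\q\cong\fa_P/I_P\fa_P$ where $P=\q\cap A$, so what you actually need is that $\fa_P$ is principal over $A_P$. Nothing in your outline addresses $A_P$ or $\fa_P$; the $(R_1)$ condition concerns $\bar A_{\bar P}$ and $B_\q$, not $A_P$. This is exactly where the paper's argument does its work: having shown $\bar A_{\bar P}$ is regular, it descends regularity to $A_P$ via \cite[Theorem~6.1(1)]{AFHa}, so $A_P$ is a UFD and $\fa_P\cong A_P$. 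Your closing remark that the $(R_1)$ hypothesis ``substitutes'' for finite flat dimension is not accurate as stated; rather, $(R_1)$ is an input to a descent-of-regularity argument that transports information back to $A$.

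Two further differences. First, the paper does not carry out your three separate verifications; it invokes \cite[Theorem~1.10]{SWS}, which reduces the entire ``well-defined group homomorphism'' conclusion to the single condition that $A_P$ is a UFD for every such $P$, and then checks that condition. Second, your implicit claim that $\Ht(\q\cap\bar A)\leq 1$ is not free: incomparability for integral extensions gives only $\Ht(\q\cap\bar A)\geq\Ht\q$, and the paper cites \cite[Theorems~4.8.6 and B.5.1]{HS}, using excellence of $\bar A$, to obtain the equality $\Ht(\q\cap\bar A)=\Ht\q$.
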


\begin{proof}
Note that the fact that $\bar A$ is excellent implies that $B$ is noetherian.
Let $Q$ be a height one prime ideal of $B$. Then $\bar P:=Q\cap \bar A=P/I$ for some prime ideal $P$ of $A$.
By~\cite[Theorem 1.10]{SWS}, it suffices to show that $A_P$ is a unique factorization domain, e.g., that it is regular.
By~\cite[Theorems 4.8.6 and B.5.1]{HS}, we know that $\Ht \bar P=\Ht Q $; this also uses 
the fact that $\bar A$ is excellent.
Since $\bar A$ satisfies $(R_1)$, it follows that $\bar A_{\bar P}$ is regular.
Since the induced map $A_P\to \bar A_{\bar P}$ has finite projective dimension, we conclude from~\cite[Theorem 6.1(1)]{AFHa}
that $A_P$ is regular.
\end{proof}

\subsection*{$S_2$-ifications}

\

We present here an exposition on $S_2$-ifications, taking much of our content from a paper by M.~Hochster and C.~Huneke \cite{HH}.  This topic comes to the fore in the guise of the integral closure when one assumes that the domain in question satisfies the regularity condition $(R_1)$, but is not necessarily normal.  This allows one to consider a broader class of rings when studying divisor class groups.

\begin{definition} 
Let $(A,\m)$ be a local ring, and let $E$ be the injective hull of $A/\frak m$ over $A$.
A \textit{canonical module} for $A$ is a finitely generated $A$-module $\omega$ such that
$\Hom_A(\omega,E)\cong\operatorname{H}^{\dim(A)}_{\m}(A)$.
\end{definition}

\begin{fact}
Let $A$ be a local ring.
If $A$ is a homomorphic image of a Gorenstein local ring, e.g., $A$ is complete, then it has a canonical module.
\end{fact}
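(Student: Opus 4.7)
My plan is to address the statement in two parts, reducing the general assertion to the main claim. First I would dispatch the parenthetical ``e.g.'' by Cohen's structure theorem: every complete noetherian local ring is a homomorphic image of a regular local ring (a formal power series ring over either a field or a Cohen ring), and regular local rings are Gorenstein. This reduces matters to the main claim that a quotient of a Gorenstein local ring admits a canonical module.

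For the main claim, suppose $A = S/I$ with $(S,\n,k)$ Gorenstein local, and set $n = \dim S$, $d = \dim A$, and $c = n - d$. I would propose
\[
\omega := \operatorname{Ext}^c_S(A, S)
\]
as the candidate canonical module: it is a finitely generated $S$-module annihilated by $I$, hence a finitely generated $A$-module. To verify the defining isomorphism $\Hom_A(\omega, E) \cong \H^d_{\fm}(A)$ with $E = E_A(k)$, I would invoke local duality over $S$. Since $S$ is Gorenstein, $S$ itself is a normalized dualizing module up to the cohomological shift by $n$, yielding a natural isomorphism $\H^i_{\n}(M) \cong \Hom_S(\operatorname{Ext}^{n-i}_S(M,S), E_S(k))$ for any finitely generated $S$-module $M$. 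Specializing to $M = A$ and $i = d$ gives
\[
\H^d_{\fm}(A) = \H^d_{\n}(A) \cong \Hom_S(\omega, E_S(k)),
\]
where the first equality holds because $\fm$-torsion and $\n$-torsion of an $A$-module coincide.

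The final step will be to rewrite this $S$-Hom as an $A$-Hom. Since $\omega$ is killed by $I$, Hom-tensor adjunction supplies $\Hom_S(\omega, E_S(k)) \cong \Hom_A(\omega, \Hom_S(A, E_S(k)))$, and I would identify the $I$-torsion submodule $\Hom_S(A, E_S(k))$ of $E_S(k)$ with $E_A(k)$: it visibly contains $k$, is essential over $k$, and is injective as an $A$-module by adjunction. The main point that requires care is precisely this comparison between Matlis duality over $S$ and over $A$; once it is in hand, the argument is a direct application of local duality over a Gorenstein ring.
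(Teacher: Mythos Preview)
Your argument is correct and is the standard proof of this well-known result. However, the paper does not actually prove this statement: it is recorded as a \emph{Fact} without proof, serving as background material. There is therefore nothing to compare against; you have supplied a valid justification where the paper simply cites the result.
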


\begin{definition} \label{j-ideal} Let $A$ be a local ring.  Denote by $j(A)$ the largest ideal which is a submodule of $A$ of dimension smaller than $\dim A$.  Specifically, 
$$j(A) = \{a \in A : \dim(A/ \ann_A(a)) < \dim A \}.$$
\end{definition}

\begin{fact}  \label{fact1} The following items are from~\cite[(2.2f), (2.1)]{HH}. 
\begin{enumerate}[(a)]
\item If $A$ is local with canonical module $\omega$, then $j(A) = \ker(A \to \Hom_A(\omega, \omega))$.
\item A local ring $A$ is equidimensional and (0) has no embedded primes if and only if $j(A)= (0)$.
In particular, if $A$ is a local domain, then $j(A)=0$.
\end{enumerate}
\end{fact}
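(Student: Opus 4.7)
The plan for (a) is to observe that the map $A \to \Hom_A(\omega, \omega)$ is simply the action of $A$ on $\omega$, so its kernel equals $\ann_A(\omega)$, reducing the claim to the identity $j(A) = \ann_A(\omega)$. I would attack both sides using an irredundant primary decomposition $(0) = \bigcap_i \q_i$ of $A$, with $\p_i = \sqrt{\q_i}$ and $d = \dim A$. Unpacking the definition of $j(A)$ via $\supp_A(Aa) = \bigcup_{a \notin \q_i} V(\p_i)$ yields
\[
j(A) \;=\; \bigcap_{i :\, \dim A/\p_i = d} \q_i,
\]
so it suffices to prove that $\ann_A(\omega)$ equals the same intersection.

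For that, I would invoke the defining isomorphism $\Hom_A(\omega, E) \cong \H^{d}_{\m}(A)$ of the canonical module---reducing to the complete case if necessary---and apply Matlis duality to conclude $\ann_A(\omega) = \ann_A(\H^{d}_{\m}(A))$. The crucial structural input is that $A/\ann_A(\H^{d}_{\m}(A))$ is the unmixed dimension-$d$ quotient of $A$, namely $A / \bigcap_{i :\, \dim A/\p_i = d} \q_i$. Matching this with the displayed formula for $j(A)$ then establishes (a).

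Part (b) follows quickly from the same formula for $j(A)$. If $A$ is equidimensional with no embedded primes in $(0)$, then every $\p_i$ in the decomposition satisfies $\dim A/\p_i = d$, so the intersection collapses to $\bigcap_i \q_i = (0)$. Conversely, if some $\p_i$ has $\dim A/\p_i < d$---either because $A$ fails to be equidimensional at the minimal primes, or because $\p_i$ is embedded---irredundancy of the primary decomposition produces a nonzero element of $\bigl(\bigcap_{j \neq i} \q_j\bigr) \setminus \q_i$, and this element lies in $j(A)$. The domain case is immediate, since $(0)$ is the unique minimal prime and $\dim A/(0) = d$.

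The main obstacle is the structural claim used in (a): showing that $\ann_A(\H^{d}_{\m}(A))$ is not merely the intersection of its attached primes (which would yield only the radical) but the precise intersection of the corresponding primary components of $(0)$. This rests on the theory of unmixed hulls together with the faithfulness of a canonical module over an unmixed local ring; once these ingredients from~\cite{HH} are in hand, the remainder of the argument is a routine exercise in primary decomposition.
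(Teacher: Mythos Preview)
The paper does not prove this statement at all: it is recorded as a \emph{Fact} and attributed directly to~\cite[(2.2f), (2.1)]{HH}, with no argument given. So there is no ``paper's own proof'' to compare against; what you have written is an outline of how one would actually establish the result that the authors are content to quote.

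Your sketch is sound. The formula
\[
j(A) \;=\; \bigcap_{i:\,\dim A/\p_i = d} \q_i
\]
follows exactly as you say from $\supp_A(Aa)=\bigcup_{a\notin\q_i}V(\p_i)$, and part~(b) drops out of it immediately (your handling of the converse via irredundancy is correct; note that in a \emph{local} ring any embedded prime $\p_i$ strictly contains a minimal prime and hence satisfies $\dim A/\p_i<d$, which is what makes that direction work). For part~(a), identifying $\ker(A\to\Hom_A(\omega,\omega))$ with $\ann_A(\omega)$ is clear, and you have correctly isolated the one nontrivial step: that $\ann_A(\omega)$ (equivalently $\ann_A(\H^d_{\m}(A))$ after Matlis duality in the complete case) equals the intersection of the $\q_i$ with $\dim A/\p_i=d$, not merely its radical. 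That is precisely the content borrowed from~\cite{HH}, so your proposal is an accurate unpacking of what the citation is doing rather than an alternative route.
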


\begin{definition}  \label{S2def} \cite[(2.3)]{HH}  
Let $A$ be a local ring.
\begin{enumerate}[(a)]
\item
If $j(A) = 0$, then a subring $S$ of the total  ring of quotients of $A$ is an {\it $S_2$-ification} of $A$ if:
\begin{enumerate}
\item[$\bullet$] $S$ is module finite over $A$;
\item[$\bullet$] $S$ satisfies the Serre condition $(S_2)$ over $A$; and
\item[$\bullet$] $\Coker(A \to S)$ has no prime ideal of $A$ of height less than two in its support.  
\end{enumerate}
\item
When $A$ is equidimensional but possibly $j(A) \neq 0$, then by an {\it $S_2$-ification} of $A$, we mean 
 an $S_2$-ification of $A/j(A).$
\end{enumerate}
\end{definition}

\begin{fact}  \label{fact2} \cite[(2.2), (2.7)]{HH} A local ring $A$ has an {\it $S_2$-ification} if it has a canonical  module $\omega$.
Specifically, $\Hom_A(\omega, \omega)$ is a commutative $A$-algebra that satisfies $(S_2)$
both as a ring and an $A$-module, and it is an $S_2$-ification of $A$.  
\end{fact}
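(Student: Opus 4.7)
The plan is to show that $B:=\Hom_A(\omega,\omega)$ satisfies each bullet of Definition~\ref{S2def}(a); the case $j(A)\neq 0$ then reduces by passing to $A/j(A)$, since $j(A)$ annihilates $\omega$ by Fact~\ref{fact1}(a) and thus $\Hom_A(\omega,\omega)=\Hom_{A/j(A)}(\omega,\omega)$. From this point on I would assume $j(A)=0$, making $A\to B$ (sending $a$ to multiplication by $a$ on $\omega$) injective by Fact~\ref{fact1}(a).

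Module-finiteness of $B$ over $A$ is automatic from $\omega$ being finitely generated over the noetherian ring $A$. For commutativity, I would use that $j(A)=0$ forces $A$ to have no embedded primes (Fact~\ref{fact1}(b)), so $\Ass(A)=\Min(A)$, and moreover $\Ass_A(B)\subseteq\Ass_A(\omega)\subseteq\Ass(A)$ once $\omega$ is known to satisfy $(S_2)$. Hence an element of $B$ vanishes if and only if it vanishes at each minimal prime. At a minimal prime $\fp$, $A_\fp$ is artinian and $\omega_\fp$ is an injective hull of its residue field, so Matlis duality yields $B_\fp=\Hom_{A_\fp}(\omega_\fp,\omega_\fp)\cong A_\fp$. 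Commutativity of $B$ and injectivity of $A\to B$ into the total quotient ring (which is the product of these $A_\fp$) follow.

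For the $(S_2)$ property, the essential input is the local-duality fact that $\Hom_R(M,\omega_R)$ has depth at least $\min(2,\dim R)$ for any finitely generated $M$ over a local ring $R$ with canonical module $\omega_R$. Applied with $M=\omega_\fp$ at an arbitrary prime $\fp$, this gives $\dpth_{A_\fp}B_\fp\geq\min(2,\dim A_\fp)$, which is Serre's $(S_2)$ for $B$ as an $A$-module; the corresponding condition on $B$ as a ring follows since $A\to B$ is module-finite. Finally, to see that $\Coker(A\to B)$ is supported in codimension at least two, I would check that at height-$0$ primes the Matlis argument above already gives $A_\fp\cong B_\fp$, while at height-$1$ primes, equidimensionality together with $j(A)=0$ makes $A_\fp$ a one-dimensional Cohen--Macaulay local ring, for which the natural map $A_\fp\to\Hom_{A_\fp}(\omega_\fp,\omega_\fp)$ is a classical isomorphism.

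The main obstacle is the depth inequality $\dpth\Hom_R(M,\omega_R)\geq\min(2,\dim R)$; this is the substantive input of local duality and is where the canonical module really earns its keep. Everything else amounts to bookkeeping with $j(A)$ and with Matlis duality in the artinian case.
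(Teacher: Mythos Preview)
The paper does not prove this statement at all; it is recorded as a Fact with a citation to Hochster--Huneke~\cite[(2.2), (2.7)]{HH}, so there is no in-paper argument to compare against. Your sketch is essentially the argument given in that reference and is sound in outline: the reduction to $j(A)=0$ via Fact~\ref{fact1}(a), commutativity by reduction to the Artinian localizations at minimal primes (where Matlis duality gives $\Hom_{A_\fp}(\omega_\fp,\omega_\fp)\cong A_\fp$), the $(S_2)$ property from the depth bound $\dpth\Hom(M,N)\geq\min(2,\dpth N)$ applied with $N=\omega$, and the codimension-two support of the cokernel from the isomorphism $A_\fp\cong\Hom_{A_\fp}(\omega_\fp,\omega_\fp)$ in height $\leq 1$.

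One small caution: in your step for $(S_2)$ and for height-one primes you invoke $\omega_\fp$ as the canonical module of $A_\fp$. This localization property is not automatic for arbitrary local rings with a canonical module; it holds when $A$ is equidimensional and a homomorphic image of a Gorenstein ring (the setting of~\cite{HH}), or more directly one can bypass it for the $(S_2)$ step by using only that $\omega$ itself is $(S_2)$ over $A$ together with the elementary depth inequality $\dpth_{A_\fp}\Hom_{A_\fp}(M,\omega_\fp)\geq\min(2,\dpth_{A_\fp}\omega_\fp)$. For the height-one step you do need $\omega_\fp$ to be a canonical module of the one-dimensional Cohen--Macaulay ring $A_\fp$, and here the equidimensionality coming from $j(A)=0$ (Fact~\ref{fact1}(b)) is what makes that work. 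None of this is a gap so much as a place where the hypotheses implicit in~\cite{HH} are doing quiet work.
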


\begin{lemma}  \label{domainfact} Assume that $A$ is a local ring with a canonical module that satisfies $(R_1)$, such that $j(A)=0$.
Then the $S_2$-ification $S$ of $A$ coincides with the integral closure of $A$ in its total ring of quotients. 
\end{lemma}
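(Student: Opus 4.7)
The plan is to establish $S = B$, where $B$ denotes the integral closure of $A$ in its total quotient ring $Q(A)$, by proving both inclusions. By Fact~\ref{fact2} I may take $S = \Hom_A(\omega,\omega)$, which is module-finite over $A$, sits inside $Q(A)$, and satisfies $(S_2)$ both as a ring and as an $A$-module. Since module-finite implies integral, the inclusion $S \subseteq B$ is immediate. For the reverse inclusion, my strategy is to show that $S$ is normal in Serre's sense---that is, $S_Q$ is a normal domain for every prime $Q$ of $S$---so that $S$ becomes integrally closed in $Q(S) = Q(A)$; any $b \in B$, being integral over $A \subseteq S$ and hence over $S$, would then lie in $S$.

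First I note that $A$ is reduced: Fact~\ref{fact1} gives $\Ass(A) = \Min(A)$, and $(R_1)$ forces $A_{\p_i}$ to be a field at each minimal prime $\p_i$, so $Q(A) = \prod_i K_i$ is a finite product of fields. Since $S$ already satisfies $(S_2)$, Serre's criterion reduces the task to verifying $(R_1)$ for $S$. For a prime $Q$ of $S$ with $\Ht_S(Q) \leq 1$, set $\p := Q \cap A$; the key reduction is that once $\Ht_A(\p) \leq 1$ is known, the support hypothesis on $\Coker(A \to S)$ forces $S_\p = A_\p$. As this is local, $Q$ is the unique prime of $S$ over $\p$, and $S_Q = S_\p = A_\p$ is regular by $(R_1)$ of $A$.

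The main obstacle is therefore the height-descent claim $\Ht_S(Q) \leq 1 \Rightarrow \Ht_A(Q \cap A) \leq 1$, which I plan to prove by contradiction using the $(S_2)$-property of $S$ as an $A$-module. Suppose $\Ht_A(\p) \geq 2$; then $\operatorname{depth}_{A_\p}(S_\p) \geq 2$. Since $A_\p \to S_\p$ is a finite extension whose maximal ideals correspond to the primes of $S$ over $\p$, and since $\sqrt{\p S_\p}$ equals the Jacobson radical of $S_\p$, the standard identity $\operatorname{depth}_{A_\p}(S_\p) = \min_{Q'\text{ over }\p}\operatorname{depth}(S_{Q'})$ gives $\operatorname{depth}(S_Q) \geq 2$, hence $\Ht_S(Q) \geq 2$---a contradiction. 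Once $(R_1)$ for $S$ is in hand, $S$ is normal; in a normal Noetherian ring each prime contains a unique minimal prime, so $\Spec(S)$ decomposes into clopen components and $S = \prod_i S_i$ as a finite product of normal domains $S_i \subseteq K_i$. Each $S_i$ contains $A/\p_i$ and is integrally closed in its fraction field $K_i$, hence contains $\overline{A/\p_i}$, so $B = \prod_i \overline{A/\p_i} \subseteq \prod_i S_i = S$, completing the proof.
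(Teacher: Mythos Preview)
Your proof is correct and follows essentially the same strategy as the paper: show that $S$ satisfies Serre's $(R_1)$ and $(S_2)$ as a ring, hence is integrally closed in $Q(S)=Q(A)$, and conclude that $S$ coincides with the integral closure $B$ of $A$. The key step in both arguments is the $(R_1)$ verification, which reduces to showing that $\Ht_S(Q)\leq 1$ forces $\Ht_A(Q\cap A)\leq 1$, after which the cokernel-support condition gives $S_Q\cong A_{Q\cap A}$.

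The one substantive difference is how you obtain this height descent. The paper simply invokes \cite[Proposition~3.5(a)]{HH}, which gives the equality $\Ht(Q\cap A)=\Ht(Q)$ directly. You instead give a self-contained argument: from $(S_2)$ of $S$ as an $A$-module and faithfulness of $S_\p$ over $A_\p$ you get $\operatorname{depth}_{A_\p}(S_\p)\geq 2$ when $\Ht(\p)\geq 2$, and then use the semilocal identity $\operatorname{depth}_{A_\p}(S_\p)=\min_{Q'\,|\,\p}\operatorname{depth}(S_{Q'})$ to force $\dim S_Q\geq 2$. This is a valid and more elementary route, avoiding the external citation. Your final product decomposition $S=\prod S_i$ to conclude $B\subseteq S$ is also correct but heavier than necessary: once $S$ is integrally closed in $Q(S)=Q(A)$ and module-finite over $A$, the double inclusion $S\subseteq B\subseteq S$ is immediate without decomposing. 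The paper handles this last step more tersely by appealing to the uniqueness of $S_2$-ifications.
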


\begin{proof}
Since the $S_2$-ification of $A$ is unique up to isomorphism, by~\cite[(2.5)]{HH}, it suffices to show that $S$ is integrally closed
in its total ring of fractions. It suffices to show that $S$ satisfies $(R_1)$ and $(S_2)$ as a ring.
Let $P$ be a prime ideal of $S$, and set $\p=P\cap A$.
From~\cite[Proposition 3.5(a)]{HH}, we know that $\Ht \p=\Ht P$.
Hence, the fact that $S$ satisfies $(S_2)$ as a ring follows from the fact that it satisfies $(S_2)$ as an $A$-module, by~\cite[Corollaire (5.7.11)]{Gr4-2}.

To verify that $S$ satisfies $(R_1)$, assume  that $\Ht P \leq 1$.
Since $A$ satisfies $(R_1)$, the ring $A_\p$ is regular. Since $\Coker(A \to S)$ has no prime ideal of $A$ of height less than two in its support,
we conclude that $A_\p\cong S_\p$, hence $A_\p\cong S_P$ is regular.
\end{proof}

\begin{fact} \label{fact4}  When $(A, \frak m)$ is an excellent local domain
and $t$ is an element of $\frak m$ such that $A/tA$ satisifes $(R_1)$, then the integral closure of $A/tA$  in its total ring of quotients is local. (See A. Grothendieck \cite[$\S$XIII]{Gr2}, or \cite[Proposition 3.9]{HH}.)   It follows from~\cite[Corollary 37.6]{N} that $t$ is a
prime element. 
\end{fact}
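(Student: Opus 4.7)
The plan is to separate the two assertions of Fact~\ref{fact4}: first, that the integral closure $B$ of $A/tA$ in its total ring of fractions is local; and second, that $t$ is then prime in $A$.

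For the first step, I would begin by noting that $(R_1)$ implies $(R_0)$, so $A/tA$ is reduced; moreover, by Krull's Hauptidealsatz every minimal prime of $tA$ has height one, so $A/tA$ is equidimensional of dimension $\dim A - 1$. Thus $j(A/tA)=0$ by Fact~\ref{fact1}(b), and since $A$ (hence $A/tA$) is excellent, Fact~\ref{fact2} together with Lemma~\ref{domainfact} identifies $B$ with the $S_2$-ification of $A/tA$. In particular $B$ is module-finite over $A/tA$, hence semilocal. The substantive content is to show that $\Spec(B)$ is connected. Here the plan is to invoke a connectedness theorem in codimension one: the punctured spectrum of $A/tA$ is connected (as follows, for instance, from Grothendieck's version of the Hartshorne connectedness theorem in SGA~2, Exp.~XIII, applied to the local domain $A$ and the principal element $t$), and this connectedness survives passage to the $S_2$-ification. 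A connected semilocal ring satisfying $(R_1)$ and $(S_2)$ is a normal domain, and in particular local; the explicit statement I would cite is~\cite[Proposition~3.9]{HH}.

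For the second step, given that $B$ is local, I would use the structural fact that a reduced noetherian ring $R$ with minimal primes $\mathfrak p_1,\ldots,\mathfrak p_r$ has integral closure isomorphic to $\prod_{i=1}^r \overline{R/\mathfrak p_i}$ inside its total ring of fractions. Locality of the product rules out nontrivial idempotents and forces $r=1$. Applied to $R=A/tA$, this yields a unique minimal prime $\mathfrak p/tA$. Since $A/tA$ is reduced, $tA$ equals the intersection of its minimal primes, so $tA=\mathfrak p$, and $t$ generates a prime ideal of $A$; this matches the conclusion of~\cite[Corollary~37.6]{N}.

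The main obstacle is the connectedness step: it is a genuinely global statement, not a consequence of the Serre conditions alone, and any honest proof must input a purity-style or Hartshorne--Grothendieck connectedness theorem applied to the principal hyperplane section of a local domain. Once that is granted, both the identification of $B$ with the $S_2$-ification and the passage from locality of $B$ to primality of $t$ are essentially formal.
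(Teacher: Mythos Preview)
The paper does not prove Fact~\ref{fact4}; it records the statement with pointers to Grothendieck's SGA~2, Exp.~XIII and \cite[Proposition~3.9]{HH} for the locality of the integral closure, and to \cite[Corollary~37.6]{N} for the primality of $t$. Your outline is exactly the natural unpacking of those citations---identify the integral closure with the $S_2$-ification via Lemma~\ref{domainfact}, invoke a Hartshorne--Grothendieck connectedness theorem for the hypersurface section of a local domain to see that the $S_2$-ification is local, and then read off a unique minimal prime---so in spirit your approach and the paper's coincide.

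There is, however, a real gap. The step ``$(R_1)$ implies $(R_0)$, so $A/tA$ is reduced'' is not valid: Serre's criterion requires $(R_0)$ \emph{and} $(S_1)$ for reducedness, and $(R_1)$ alone does not supply $(S_1)$. Fact~\ref{fact4} assumes only that $A$ is an excellent local domain; this makes $A$ satisfy $(S_1)$ but not necessarily $(S_2)$, so $A/tA$ need not satisfy $(S_1)$, and embedded primes of height $\geq 2$ in $A/tA$ are not excluded by $(R_1)$. This unproved reducedness is load-bearing in two places: your appeal to Fact~\ref{fact1}(b) for $j(A/tA)=0$ requires the absence of embedded primes, and your primality argument (integral closure as a product indexed by minimal primes, together with $tA$ equal to the intersection of its minimal primes) requires $A/tA$ reduced. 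The gap closes immediately if $A$ is $(S_2)$---in particular normal, which is the hypothesis in every place the paper actually uses Fact~\ref{fact4}---since then $A/tA$ is $(S_1)$ and hence reduced. To handle the stated generality you would instead work with the $S_2$-ification of $A/tA$ in the sense of Definition~\ref{S2def}(b), i.e., the $S_2$-ification of $(A/tA)/j(A/tA)$, which is what the Hochster--Huneke machinery addresses directly.
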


The fact above, that the $S_2$-ification of $A/tA$ is local, does not easily generalize to $A/I$ for a non-principal ideal $I$ of $A$, even under the assumptions that $A/I$ is complete, local, and equidimensional.  One would need to know, for example, that the canonical module of $A/I$ is indecomposable; or more importantly, that any of the equivalent conditions below hold for the ring $A/I$.

\begin{fact} \cite[Theorem 3.6]{HH} \label{HH} For $(T, \frak n)$ a complete local equidimensional ring, the following conditions are equivalent:
\begin{enumerate}[(i)]
\item $H^{\dim T}_{\frak n}(T)$ is indecomposable;
\item The canonical module 
of $T$ is indecomposable;
\item The $S_2$-ification of $T$ is local;
\item For every ideal $J$ of height at least two, $\Spec(T) -V(J)$ is connected;
\item Given any two distinct minimal primes $\frak p, \frak q$ of $T$, there is a sequence of minimal primes $\frak p = \frak p_0, \dots, \frak p_r = \frak q$ such that for $0 \leq i < r$, $\Ht(\frak p_i + \frak p_{i+1}) = 1$.
\end{enumerate}
\end{fact}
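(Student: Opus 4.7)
The plan is to prove the equivalences along two axes: the algebraic implications $(i)\Leftrightarrow(ii)\Leftrightarrow(iii)$ via local duality and the $\Hom_T(\omega,\omega)$ description of the $S_2$-ification, and the topological equivalence $(iv)\Leftrightarrow(v)$ by a direct combinatorial argument on minimal primes. The two halves are then bridged by $(iii)\Leftrightarrow(iv)$, which relates idempotents in the $S_2$-ification to disconnections of the codimension-at-most-one locus of $\Spec(T)$.

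For $(i)\Leftrightarrow(ii)$, since $T$ is complete and equidimensional it has a canonical module $\omega$, and Grothendieck local duality identifies $H^{\dim T}_{\frak n}(T)$ with the Matlis dual of $\omega$. Matlis duality is an anti-equivalence between finitely generated and Artinian $T$-modules, so $\Hom_T(\omega,\omega)$ is anti-isomorphic to $\Hom_T(H^{\dim T}_{\frak n}(T), H^{\dim T}_{\frak n}(T))$; the two endomorphism rings share an idempotent lattice, so $\omega$ is indecomposable iff $H^{\dim T}_{\frak n}(T)$ is. For $(ii)\Leftrightarrow(iii)$, after reducing to the case $j(T)=0$ as in Definition~\ref{S2def}(b), Fact~\ref{fact2} identifies the $S_2$-ification $S$ with $\Hom_T(\omega,\omega)$; this is a commutative module-finite extension of $T$, hence semilocal, so $S$ is local iff it has no nontrivial idempotents, which matches the indecomposability of $\omega$.

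For $(iv)\Leftrightarrow(v)$, I would argue directly on the combinatorics of minimal primes. Any partition of the minimal primes into disjoint nonempty sets $P,Q$ yields ideals $I_P=\bigcap_{\frak p\in P}\frak p$ and $I_Q=\bigcap_{\frak q\in Q}\frak q$ with $V(I_P)\cup V(I_Q)=\Spec(T)$ and $V(I_P)\cap V(I_Q)=V(I_P+I_Q)$, and $\Ht(I_P+I_Q)\geq 2$ exactly when no height-one prime of $T$ contains both some $\frak p\in P$ and some $\frak q\in Q$. So a failure of (v) produces an ideal $J$ of height $\geq 2$ for which $\Spec(T)-V(J)$ is disconnected, and conversely any disconnection of $\Spec(T)-V(J)$ with $\Ht J\geq 2$ partitions the minimal primes into classes contradicting (v).

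The remaining equivalence $(iii)\Leftrightarrow(iv)$ is the main obstacle. The idea is to match idempotents of $S$ with disconnections of $\Spec(T)-V(J)$ for height-$\geq 2$ ideals $J$, using that the annihilator of $\Coker(T\to S)$ has height $\geq 2$ by Definition~\ref{S2def}(a). In one direction, a nontrivial idempotent $e\in S$ gives a decomposition $\Spec(S)=\Spec(eS)\sqcup\Spec((1-e)S)$ that restricts to a disconnection of the common open $\Spec(T)-V(J)$. The reverse is the delicate step: a disconnection of $\Spec(T)-V(J')$ with $\Ht J'\geq 2$ yields an idempotent in the ring of sections over that open set, and one must show this extends uniquely to a global idempotent of $S$. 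This is exactly where the $(S_2)$ hypothesis on $S$ enters, since sections of an $(S_2)$ module extend across any closed subset of codimension $\geq 2$, together with the height-preservation property from \cite[Proposition~3.5(a)]{HH} needed to ensure $\Ht(J'S)\geq 2$ inside $S$.
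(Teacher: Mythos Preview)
The paper does not prove this statement at all: it is recorded as a \emph{Fact}, attributed to Hochster and Huneke \cite[Theorem~3.6]{HH}, and used as a black box. So there is no ``paper's own proof'' to compare against; any proof you supply is necessarily going beyond what the paper does.

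That said, your sketch is a reasonable outline of the Hochster--Huneke argument. The equivalences $(i)\Leftrightarrow(ii)$ via local duality and $(ii)\Leftrightarrow(iii)$ via $S=\Hom_T(\omega,\omega)$ are exactly right, as is the observation that $(iv)\Leftrightarrow(v)$ is a combinatorial statement about partitions of $\min(T)$. One point to be careful about in $(iii)\Rightarrow(iv)$: you want to argue that a disconnection of $\Spec(T)-V(J)$ produces a nontrivial idempotent of $S$ by extending sections across codimension~$\geq 2$. For this you need not only that $S$ is $(S_2)$ but that $S$ agrees with $T$ in codimension~$\leq 1$ (the third bullet of Definition~\ref{S2def}(a)), so that the open sets $\Spec(T)-V(J)$ and $\Spec(S)-V(JS)$ coincide up to a codimension-$\geq 2$ discrepancy; you gesture at this via \cite[Proposition~3.5(a)]{HH}, but the argument should make explicit that the idempotent you obtain on the punctured locus is a section of the sheaf associated to $S$, not merely to $T$. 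Also, in reducing to $j(T)=0$ for $(ii)\Leftrightarrow(iii)$, you should note that passing from $T$ to $T/j(T)$ does not disturb conditions (i), (iv), (v) either, since $j(T)\subseteq\nl(T)$ when $T$ is equidimensional.
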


We apply this result in Section 4.  (See Lemmas \ref{lem1}, \ref{lem2}, and \ref{prop1}.)

\subsection*{Unramified and \'etale extensions, and isomorphisms in codimension one.}

\

For the remainder of this section, let  $R$ be a ring, and let $S$ be an $R$-algebra.  

\begin{definition}  For a prime ideal $P$ of $S$ and $\frak p = P \cap R$, we say that $P$ is {\it unramified over $R$} if $\frak p S_P = P S_P$ and  $S_P/\frak p S_P$ is a separable field extension of $R_{\frak p}/ \frak p R_{\frak p}$. 
\end{definition}

\begin{definition}  A prime ideal $\p$ of $R$ is {\it unramified in $S$} if (1) every prime lying over $\p$ is unramified over $R$, and (2)
there are only finitely many prime ideals of $S$ lying over $\fp$.
(Note that if $R\into S$ is module finite, then condition (2)  is automatic.)  
\end{definition}

\begin{definition} For $S$ to be {\it unramified over} $R$ means that (1) every prime ideal of $S$ is unramified over $R$, and (2)
for every $\p\in\Spec(R)$ there are only finitely many prime ideals of $S$ lying over $\fp$.
(Note that if $R\into S$ is module finite, then condition (2)  is automatic.)  
\end{definition}

\begin{definition} We say that $S$ is {\it unramified in codimension $i$} (or more accurately, in codimension less than or equal to $i$) over $R$ if every prime ideal of $S$ with height less than or equal to $i$ is unramified over $R$.
\end{definition}

\begin{definition} For $S$ to be {\it \'etale} over $R$ means that $S$ is unramified and flat over $R$.
\end{definition}

\begin{definition}  We say that $S$ is {\it \'etale in codimension one} (or more accurately, in codimension less than or equal to one) over $R$ if, for every prime ideal $P$ of $S$ with height less than or equal to one and $\frak p = P \cap R$, the ring $S_P$ is \'etale over $R_{\frak p}$.
\end{definition}

The next fact is a version of ``purity of branch locus'' from Auslander and Buchsbaum~\cite[Corollary 3.7]{AB} that is very useful for the proof of Theorem~\ref{thmtwo}.

\begin{fact} \label{purity130108a}
Assume that $R$ is local and that $R \hookrightarrow S$ be a module finite extension of normal domains which is unramified in codimension one.  If $S$ is free as a $R$-module, then $S$ is unramified over $R$.
\end{fact}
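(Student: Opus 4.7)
The plan is to reduce the problem to showing that the discriminant ideal $\mathfrak{d}(S/R) \subseteq R$ equals the unit ideal. Since $S$ is free of finite rank over $R$, one defines $\mathfrak{d}(S/R)$ as the principal ideal generated by $\det\bigl([\operatorname{Tr}_{S/R}(s_i s_j)]_{i,j}\bigr)$ for any $R$-basis $s_1, \dots, s_n$ of $S$; this ideal is independent of the choice of basis. Because $S$ is flat over $R$ (being free), the standard characterization of étaleness via nondegeneracy of the trace pairing shows that $V(\mathfrak{d}(S/R)) \subseteq \Spec(R)$ is precisely the set of primes $\fp$ such that some prime $P$ of $S$ lying over $\fp$ is ramified. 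Hence it suffices to show $V(\mathfrak{d}(S/R))$ is empty.

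First, applying the unramified-in-codimension-one hypothesis at the zero prime of $S$ shows that the generic fibre $S \otimes_R K$, where $K$ denotes the fraction field of $R$, is a finite separable field extension of $K$, so the discriminant is a nonzero element of $R$. Next, I would show that $V(\mathfrak{d}(S/R))$ contains no height-one prime of $R$. Let $\fp$ be such a prime and let $P$ be a prime of $S$ lying over $\fp$. Since $R$ is a normal domain and $R \hookrightarrow S$ is a module-finite extension of integral domains, the combination of going-up and the going-down theorem (the latter using normality of $R$) yields $\Ht P = \Ht \fp = 1$. The hypothesis that $S$ is unramified in codimension one then implies that $R_{\fp} \hookrightarrow S_P$ is unramified for every prime $P$ over $\fp$, so $\fp \notin V(\mathfrak{d}(S/R))$.

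Finally, $\mathfrak{d}(S/R) = (d)$ is a nonzero principal ideal of $R$ whose vanishing locus contains no height-one prime. By Krull's principal ideal theorem, any nonzero non-unit of a Noetherian ring is contained in some height-one prime, so $d$ must be a unit and $\mathfrak{d}(S/R) = R$. Therefore $S$ is unramified over $R$.

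The main obstacle is cleanly justifying that $V(\mathfrak{d}(S/R))$ really coincides with the ramification locus in $\Spec(R)$; this rests on the classical link between étaleness of a finite flat extension and nondegeneracy of the trace pairing, and on the fact that $S$ is flat over $R$ by freeness. Everything else is routine once one has the dimension formula for finite extensions of Noetherian normal domains together with Krull's principal ideal theorem.
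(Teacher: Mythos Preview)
The paper does not actually prove this statement; it records it as a Fact and cites Auslander--Buchsbaum~\cite[Corollary 3.7]{AB} for the proof. So there is no in-paper argument to compare against.

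Your argument is correct and is essentially the classical discriminant proof of purity of branch locus in the free case, which is the approach in the cited reference. The freeness of $S$ over $R$ is exactly what makes the discriminant $\mathfrak{d}(S/R)$ a \emph{principal} ideal of $R$; combined with the height computation (via going-up and going-down for a module-finite extension of domains with $R$ normal) and the unramified-in-codimension-one hypothesis, Krull's principal ideal theorem forces the discriminant to be a unit. One small point worth making explicit: your identification of $V(\mathfrak{d}(S/R))$ with the ramification locus in $\Spec(R)$ uses that for a finite free extension the fibre $S\otimes_R\kappa(\fp)$ is \'etale over $\kappa(\fp)$ if and only if the trace form is nondegenerate modulo $\fp$; this is standard (e.g., \cite[Proposition 2.2]{AB} or the separable-algebra characterization in \cite{DI}) but is the one place where a reader might want a reference.
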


\begin{definition}  \label{codimdef} Let $M$, $N$ be finitely generated modules over a ring $R$.  The map $\varphi : M \to N$ is said to be an {\it isomorphism in codimension one} if for each prime ideal $\p$ in $R$ of height less than or equal to one, the induced homomorphism $\varphi_{\p}\colon M_{\p}\to N_{\p}$ is an isomorphism.
\end{definition}

The next fact follows with a bit of work from a result of Auslander and Buchsbaum~\cite[Proposition 3.4]{AB}.
It is a useful tool in establishing many of the ring isomorphisms in our main argument.

\begin{fact} \label{AG} Let $A$ be 
a  normal domain.  If $M$ 
is a reflexive $A$-module, $N$ is a torsion-free $A$-module, and 
$\psi\colon M \to N$ is an $A$-module homomorphism, then $\psi$ is an isomorphism if and only if
it is an isomorphism in codimension one.
\end{fact}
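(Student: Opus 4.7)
The plan is as follows. The ``only if'' direction is immediate (localization preserves isomorphisms), so I focus on the ``if'' direction. Assume $\psi$ is an isomorphism in codimension one. For injectivity, the kernel $K=\ker\psi$ satisfies $K_{\fp}=0$ for every prime $\fp$ of height at most one; in particular $K_{(0)}=0$. Since $M$ is reflexive it is torsion-free, so $K$ embeds in $K\otimes_A\mathrm{Frac}(A)=K_{(0)}=0$, forcing $K=0$. I may thus identify $M$ with its image $\psi(M)\subseteq N$, and then the cokernel $N/M$ is supported in codimension $\geq 2$.

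For surjectivity, my plan is to compare both $M$ and $N$ with the double dual $\ddual{N}{A}$. Since $N$ is torsion-free, the biduality map $\bidual{A}{N}\colon N\to\ddual{N}{A}$ is injective, and $\ddual{N}{A}$ is automatically reflexive. For any prime $\fp$ of $A$ of height at most one, $A_{\fp}$ is either the fraction field of $A$ or a DVR (using that $A$ is a normal domain), so the finitely generated torsion-free $A_{\fp}$-module $N_{\fp}$ is free, hence reflexive; consequently $\bigl(\ddual{N}{A}/N\bigr)_{\fp}=0$, and $\ddual{N}{A}/N$ is supported in codimension $\geq 2$. The cokernel $C$ of the composite injection $M\hookrightarrow N\hookrightarrow\ddual{N}{A}$ then fits into a short exact sequence $0\to N/M\to C\to \ddual{N}{A}/N\to 0$, so $C$ is itself supported in codimension $\geq 2$.

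The main step is to show $C=0$; together with the inclusions $M\subseteq N\subseteq\ddual{N}{A}$ this forces $N=M$, and so $\psi$ is an isomorphism. Here I invoke the standard fact that a finitely generated reflexive module over a normal domain satisfies Serre's condition $(S_2)$. Suppose for contradiction that $C\neq 0$ and pick $\fp$ minimal in $\supp(C)$; then $\Ht\fp\geq 2$, and normality of $A$ yields $\dpth A_{\fp}\geq 2$. The $(S_2)$ property then gives $\dpth M_{\fp}\geq 2$ and $\dpth \bigl(\ddual{N}{A}\bigr)_{\fp}\geq 2$, while minimality of $\fp$ forces $\supp(C_{\fp})=\{\fp A_{\fp}\}$, hence $\dpth C_{\fp}=0$. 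On the other hand, the depth lemma applied to $0\to M_{\fp}\to\bigl(\ddual{N}{A}\bigr)_{\fp}\to C_{\fp}\to 0$ yields $\dpth C_{\fp}\geq\min\bigl(\dpth M_{\fp}-1,\;\dpth\bigl(\ddual{N}{A}\bigr)_{\fp}\bigr)\geq 1$, a contradiction. The main obstacle is invoking the $(S_2)$ characterization of reflexive modules over a normal domain; once this standard fact is in hand, everything else is formal depth bookkeeping.
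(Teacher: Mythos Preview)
Your argument is correct. The paper does not actually supply a proof of this fact; it is stated as a \emph{Fact} with the remark that it ``follows with a bit of work from a result of Auslander and Buchsbaum~[Proposition 3.4]'', so there is no detailed proof in the paper to compare against. Your approach---reducing to the vanishing of the cokernel $C$ of $M\hookrightarrow\ddual{N}{A}$, observing that $C$ is supported in codimension $\geq 2$, and then deriving a contradiction from the depth lemma together with the $(S_2)$ property of reflexive modules over a normal domain---is a standard and complete way to carry out that ``bit of work''. One small point you leave implicit: to conclude $\dpth M_{\fp}\geq 2$ and $\dpth(\ddual{N}{A})_{\fp}\geq 2$ from $(S_2)$ you need $\fp$ in the support of these modules, which holds because nonzero torsion-free modules have full support; the degenerate case $M=0$ (hence $N=0$) is trivial.
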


\section{Proof of Theorem \ref{thmone}}  \label{sec02}

\begin{chunk} \label{assumptions}
Let $(A, \frak m)$ be a complete, local, normal domain such that $A/\frak m$ is separably closed; let $I$ be a prime ideal of $A$ with finite projective dimension such that $\bar A = A/I$ is normal. Let $e >1$ be an integer which represents a unit in $A$ and assume that $A$ contains a primitive $e$-th root of unity. Let $\frak a$ be a non-zero reflexive ideal of $A$ such that $[\ddual{(\frak a \otimes_A \bar A)}{\bar A}]$ is trivial in $\Cl(\bar A)$, and suppose that the order of $[\frak a]$ in $\Cl(A)$ is $e$.  Thus, $\frak a^{(e)} = aA$ for some $a \in \frak a$.  Set $K$ to be the fraction field of $A$.  
\end{chunk}

Since $A$ is a normal ring, it is the intersection over all height one primes $\p$ of the family of discrete valuation rings $\{A_{\frak p}\}$, each of which is contained in $K$, with corresponding valuations $v_{\frak p}$.  There are height one primes $\frak p_i$ of $A$ and positive integers $n_i$ such that $\frak a = \frak p_1^{(n_1)} \cap \dots \cap \frak p_r^{(n_r)}$.  Then $\frak a^{(e)} = \frak p_1^{(en_1)} \cap \dots \cap \frak p_r^{(en_r)} = aA$ and $v_{\frak p_i}(a) = en_i$.  By the Approximation Theorem \cite[Theorem 12.6]{M}, there is an element $b \in A$ such that $v_{\frak p_i}(b) = n_i$ for $i = 1, \dots, r$ and $v_{\frak q}(b) \geq 0$ for all other height one primes $\frak q$ of $A$.  Set $u = b^e/a\in K$.
Then $v_{\frak p_i} (u) = en_i - en_i =0$  for $i=1, \dots, r$ and $v_{\frak q}(u) = v_{\frak q}(b^e) \geq 0$ for all other $\frak q$.  

Let $\sqrt[e]{u}$ be a fixed $e$-th root of $u$ in an algebraic closure of $K$.
Then $\sqrt[e]{a}:=b/\sqrt[e]{u}$ is an $e$-th root of $a$.
Since $au = b^e$, we have $K[\root e \of a]=K[\root e \of u].$  Moreover, the ring 
\begin{equation} \label{eq121019a}
R = A \oplus \left[\frak a \cdot \frac{\root e \of u}{b}\right] \oplus \left[\frak a^{(2)} \cdot \frac{\root e \of {u^2}}{b^2}\right]\oplus \dots \oplus \left[\frak a^{(e-1)} \cdot \frac{\root e \of {u^{e-1}}}{b^{e-1}}\right]
\end{equation}
is the integral closure of $A$ in the field extension $K[\root e \of a]$, as per \cite[Theorem 2.4]{G3}.
In particular, $R$ is a domain.  
It is worth describing the ring structure on $R$:
for $a_i\in\fa^{(i)}$ and $a_j\in\fa^{(j)}$, we have
$$\left(a_i\frac{\root e \of {u^i}}{b^i}\right)\left(a_j\frac{\root e \of {u^j}}{b^j}\right)
=\begin{cases}
\displaystyle a_ia_j\frac{\root e \of {u^{i+j}}}{b^{i+j}} &\text{if $i+j<e$} \\
\displaystyle a_ia_j\frac{\root e \of {u^{i+j}}}{b^{i+j}}=\frac{a_ia_j}{a}\frac{\root e \of {u^{i+j-e}}}{b^{i+j-e}} &\text{if $i+j\geq e$}.
\end{cases}$$

\begin{lemma}  \label{AtoRetalecodimone}
Let $P \in \Spec(R)$ and set $\p=P\cap A$.  Then $\Ht \fp = \Ht P$.  Furthermore, if $A_{\p}$ is regular, then the extensions $A_{\p}\to R_{\p}$ and $A_{\p}\to R_P$ are \'etale.  In particular, the extension $A \hookrightarrow R$ is \'etale in codimension one.
\end{lemma}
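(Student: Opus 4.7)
The plan is to prove the three assertions in order, with the ``in particular'' clause following from the first two. For $\Ht\fp = \Ht P$, the extension $A\hookrightarrow R$ is module-finite between domains and $A$ is normal, so going-up, going-down (Cohen--Seidenberg, using that $R$ is a domain), and incomparability hold simultaneously, matching chains in $R$ below $P$ with chains in $A$ below $\fp$.

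For the \'etale statements, I first verify unramifiedness at every height-$\le 1$ prime $\mathfrak{q}$ of $A$. From \eqref{eq121019a} we have $R\cong\bigoplus_{i=0}^{e-1}\fa^{(i)}$ as $A$-modules, so after localizing at $\mathfrak{q}$---where $A_\mathfrak{q}$ is a field or a DVR---each summand $\fa^{(i)}_\mathfrak{q}$ is principal. If $\mathfrak{q}=\fp_j$ for some $j$, then $v_\mathfrak{q}(b^i)=in_j$ matches the generator of $\fa^{(i)}_\mathfrak{q}$ and collapses $\fa^{(i)}_\mathfrak{q}\cdot(\sqrt[e]{u^i}/b^i)$ to $A_\mathfrak{q}\cdot(\sqrt[e]{u})^i$; thus $R_\mathfrak{q}\cong A_\mathfrak{q}[T]/(T^e-u)$, and since $v_\mathfrak{q}(u)=0$, both $u$ and $e$ are units in $A_\mathfrak{q}$, making this Kummer extension \'etale. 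Otherwise $\fa_\mathfrak{q}=A_\mathfrak{q}$, and the substitution $x=\sqrt[e]{u}/b$ presents $R_\mathfrak{q}\cong A_\mathfrak{q}[T]/(aT^e-1)$; since $a\notin\mathfrak{q}$, $a$ is a unit in $A_\mathfrak{q}$ and $R_\mathfrak{q}$ is again \'etale.

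Now assume $A_\fp$ is regular. By Auslander--Buchsbaum, $A_\fp$ is a UFD, so every reflexive rank-one $A_\fp$-module is principal; in particular each $\fa^{(i)}_\fp$ is free of rank one, and $R_\fp$ is free of rank $e$ over $A_\fp$. The inclusion $A_\fp\hookrightarrow R_\fp$ is module-finite between normal domains (as $R_\fp$ is a localization of the normal domain $R$), unramified in codimension one by the previous paragraph, and has $R_\fp$ free over $A_\fp$; Fact~\ref{purity130108a} then yields that $A_\fp\hookrightarrow R_\fp$ is unramified, and \'etale by flatness. Localizing further at $P$ preserves flatness and unramifiedness, so $A_\fp\to R_P$ is \'etale as well. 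For the ``in particular'' clause, any height-$\le 1$ prime $P$ of $R$ contracts to a prime $\fp$ of $A$ of height $\le 1$ (by the height equality), making $A_\fp$ regular (since $A$ is normal), and the statement just proved gives \'etale-ness of $A_\fp\to R_P$.

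The main obstacle I expect is the codimension-one calculation, where the valuation bookkeeping in the two cases must be carried out carefully to identify $R_\mathfrak{q}$ as a Kummer-type extension with unit base (and hence unit discriminant). Once that is in hand, Auslander--Buchsbaum and purity of branch locus assemble the rest in a standard way.
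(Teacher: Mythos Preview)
Your proof is correct, and your structure differs from the paper's in one substantive way that is worth recording.

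For the codimension-one step, your argument and the paper's are essentially the same: both split on whether $\mathfrak q$ is one of the $\fp_j$ (equivalently, whether $a\in\mathfrak q$), identify $R_{\mathfrak q}$ with a Kummer extension $A_{\mathfrak q}[T]/(T^e-u)$ or $A_{\mathfrak q}[T]/(T^e-a)$ (your $A_{\mathfrak q}[T]/(aT^e-1)$ is the same thing up to a unit change of variable), and conclude \'etaleness since the relevant constant is a unit. Your valuation bookkeeping is clean and correct.

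The genuine difference is in the passage from ``\'etale at height $\le 1$ primes'' to ``\'etale at every regular $A_\fp$.'' The paper attempts the same two-case analysis for arbitrary regular $A_\fp$, asserting that if $a\in\fp$ then $\fp=\fp_i$; but that implication only holds when $\Ht\fp=1$, so the case $a\in\fp$ with $\Ht\fp\ge 2$ is not directly covered by the written argument. You bypass this entirely: once you know $R_\fp$ is free over $A_\fp$ (from the UFD property), you feed freeness and unramifiedness in codimension one into purity of branch locus (Fact~\ref{purity130108a}) to obtain unramifiedness, hence \'etaleness, at every regular $A_\fp$. This is a legitimate alternative and in fact tidier, since it reduces the delicate computation to codimension one and lets a structural theorem do the rest. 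The cost is an appeal to purity that the paper's intended direct argument would avoid; the benefit is that the case $a\in\fp$, $\Ht\fp\ge 2$ is handled without further work.

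One small point to make explicit in your write-up: when you invoke Fact~\ref{purity130108a}, note that $R_\fp$ is a normal domain because it is a localization of the normal domain $R$, and that ``unramified in codimension one'' for $A_\fp\hookrightarrow R_\fp$ follows from your height-one computation together with the height equality $\Ht(Q\cap A_\fp)=\Ht Q$ for primes $Q$ of $R_\fp$.
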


\begin{proof} Since $A$ is normal and $R$ is a domain that is module finite over $A$, the theorems of going-up and going-down apply, and hence $\Ht \fp = \Ht P$.

Now assume that $A_{\fp}$ is regular.  There are two cases to consider:

Suppose $a \notin \frak p$.  Then $a$ is a unit in $A_{\frak p}$.  By \cite[Theorem III.4.4, p.~113]{DI}, the polynomial $T^e - a$ is separable in $A_{\frak p}[T]$.  Thus, $A_{\frak p}[T]/(T^e-a)A_{\frak p}[T]$ is a separable $A_{\frak p}$-algebra, by definition, cf.~\cite[p.~109]{DI}.  Since $a \notin \frak p$, we have $\fa_{\fp}=A_{\fp}$,
so $R_{\fp}\cong R \otimes_A A_{\frak p} \cong A_{\frak p} \oplus A_{\frak p} \oplus \dots \oplus A_{\frak p}$ ($e$ copies); i.e., $R_{\frak p} \cong A_{\frak p}[T]/(T^e-a)A_{\frak p}[T]$.  In other words, $R \otimes_A A_{\frak p}$ is a separable $A_{\frak p}$-algebra.  Set $R_{\frak p} = R \otimes_A A_{\frak p}.$  By definition \cite[p.~40]{DI}, $R_{\frak p}$ is a projective $(R_{\frak p} \otimes_{A_{\frak p}} R_{\frak p})$-module.  In addition, it is the integral closure of $A_{\frak p}$ in $K[\root e \of a] = K[\root e \of u]$.  Thus, by \cite[Proposition 2.2]{AB}, $R_{\frak p}$ is unramified over $A_{\frak p}$.  

On the other hand, if $a \in \frak p$, then $\frak p = \frak p_i$ for some $i$.  Let $S$ denote the complement in $A$ of $\frak p_1, \dots, \frak p_r$, the height one primes in the decomposition of $\frak a$ i.e., $S = A - (\frak p_1\cup \cdots\cup \frak p_r)$.  Note that $u$ is a unit in $A_S$ because $v_{\frak p_i}(u) = 0$ for all $i = 1, \dots, r$.  As above, 
$T^e - u$ is separable in $A_S[T]$.  Thus, $A_S[T]/(T^e - u) A_S[T]$ is a separable $A_S$-algebra, by definition, \cite[p. 109]{DI}.  Since $u \in S$, we have $R \otimes_A A_S \cong \oplus^e A_S$ and the proof proceeds as in the case above.  In conclusion, for each $i$, $A_{\frak p_i} \to R_{\frak p_i}$ is unramified.  

The fact that $A_{\p}$ is regular implies that $R_{\p}$ is free because $\fa A_{\p} \cong A_{\p}$.  Thus, $R_{\p}$ is flat over $A_{\p}$.  In particular, the extension $A_{\p}\to R_{\p}$ is \'etale.  Moreover, it follows that $R_P$ is also flat over $A_{\p}$, and hence the extension $A_{\p}\to R_P$ is \'etale.  Finally, since $A_{\p}$ is regular for any prime $\p = P \cap A$, where $\Ht_R P \leq 1$, it follows that $A \to R$ is \'etale in codimension  one. 
\end{proof}

\begin{lemma} \label{Rlocal}
The ring $R$ is a complete local normal domain.
\end{lemma}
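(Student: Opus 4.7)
The plan is to deduce each of the three claimed properties (normal, local, complete) from standard structure theorems about module-finite algebras over a complete local ring, using only the explicit description of $R$ in equation~\eqref{eq121019a}.

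First, I would observe that $R$ is module finite over $A$: each summand $\fa^{(i)}\cdot(\root e \of {u^i}/b^i)$ in~\eqref{eq121019a} is an $A$-submodule of $K[\root e \of a]$ isomorphic to the finitely generated $A$-module $\fa^{(i)}$, and there are only $e$ such summands. In particular $R$ is Noetherian. Normality of $R$ is then immediate from standard properties of integral closures: paragraph~\ref{assumptions} has already recorded that $R$ is a domain and that $R$ is the integral closure of $A$ in the finite extension $K[\root e \of a]$ of $K$, whose fraction field is $K[\root e \of a]$ itself. Hence $R$ is integrally closed in its fraction field, that is, a normal domain.

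For locality and completeness, my approach is to invoke the well-known fact that every module-finite algebra $R$ over a complete local ring $(A,\fm)$ decomposes as a finite direct product $R\cong R_1\times\cdots\times R_n$ of complete local rings, the factors being obtained by lifting the primitive idempotents of $R/\fm R$ to orthogonal idempotents of $R$. Since $R$ is a domain, this decomposition must be trivial, so $n=1$ and $R$ itself is a complete local ring whose maximal ideal contracts to $\fm$.

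The main obstacle is really just organizational rather than substantive: once module-finiteness and the domain property are on the table, normality follows from the definition of integral closure, and locality and completeness are forced by the idempotent-lifting decomposition for module-finite algebras over complete local rings. The one minor point worth spelling out in the write-up is the identification of the fraction field of $R$ with all of $K[\root e \of a]$, since this is what justifies appealing to the phrase ``integrally closed in its fraction field''; this follows from the standard fact that every element of a finite extension of $K$ has the form $\alpha/d$ with $\alpha$ integral over $A$ and $d\in A\setminus\{0\}$.
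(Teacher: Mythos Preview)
Your argument is correct, and for normality and completeness it matches the paper's proof. For locality, however, you take a genuinely different and more direct route. The paper first passes to the intermediate subring $A[\sqrt[e]{a}]\subseteq R$, shows explicitly that $A[\sqrt[e]{a}]$ is local (by proving that the natural map $A/\fm\to A[\sqrt[e]{a}]/J$ is an isomorphism, where $J$ is the Jacobson radical), and then invokes Nagata's result \cite[Corollary~37.6]{N} that the number of maximal ideals of the integral closure equals the number of minimal primes of the completion; since $A[\sqrt[e]{a}]$ is already a complete domain, $R$ is local. Your approach bypasses both the intermediate ring and Nagata's theorem by appealing directly to the structure theorem for module-finite algebras over a complete local ring (e.g., \cite[Theorem~8.15]{M}): such an algebra is a finite product of complete local rings, and a domain admits no nontrivial such decomposition. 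This is shorter and more conceptual; the paper's argument, on the other hand, makes the locality visible through an explicit computation and isolates the role of $A[\sqrt[e]{a}]$. One small remark: your worry about identifying $\operatorname{Frac}(R)$ with $K[\sqrt[e]{a}]$ is unnecessary for normality, since any ring that is the integral closure of $A$ in a field $L$ is automatically integrally closed in $L$ and hence in its own fraction field, regardless of whether the two coincide.
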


\begin{proof}  
Since $R$ is the integral closure of $A$ in $K[\root e \of a]$,
it is normal.  Moreover, $R$ is complete since it is a module finite extension of the complete ring $A$.  It remains to show that $R$ is local.

Since $R$ is the integral closure of $A$ in $K[\sqrt[e]{a}]$, it follows that
$R$ is the integral closure of $A[\sqrt[e]{a}]$ in $K[\sqrt[e]{a}]$; that is, in the field of
fractions of $A[\sqrt[e]{a}]$.  Note that $A[\sqrt[e]{a}]$ is an integral domain and finitely generated as an $A$-module. Therefore, since $A$ is complete,
it follows that $A[\sqrt[e]{a}]$ is also complete, both in the $\m$-adic topology and in
the $J$-adic topology, where $J$ is the Jacobson radical of $A[\sqrt[e]{a}]$. 
We will show that $A[\sqrt[e]{a}]$ is local and then apply \cite[Corollary 37.6]{N}, which states
that the number of maximal
ideals in $R$, the integral closure of $A[\sqrt[e]{a}]$, is equal to the number of minimal prime ideals in
the completion of $A[\sqrt[e]{a}]$. Since $A[\sqrt[e]{a}]$ is complete, $R$ must be local.

To show that $A[\sqrt[e]{a}]$ is local, it suffices to show that $J$ is maximal.
Since $A[\sqrt[e]{a}]$ is module finite over $A$, each of its maximal ideals
of contracts to $\m$ and $A[\sqrt[e]{a}]$ has only finitely many maximal
ideals. It follows that $J\cap A=\m$. Consider the induced homomorphism
$\psi\colon A/\m \to A[\sqrt[e]{a}]/J$, which we claim is bijective.  Injectivity is satisfied because $J \cap A = \frak m$.
For surjectivity, consider an element $\displaystyle{\xi=\sum_{i=0}^{e-1}a_i (\sqrt[e]{a})^i}$ in $A[\sqrt[e]{a}]$.
Since $(\sqrt[e]{a})^e=a\in\m\subseteq J$, we have $\sqrt[e]{a}\in J$, and hence  $a_i(\sqrt[e]{a})^i\in J$, for $1 \leq i \leq e-1$. 
In $A[\sqrt[e]{a}]/J$, we then have
$$\ol\xi=\sum_{i=0}^{e-1}\ol{a_i}\ol{(\sqrt[e]{a})^i}=\ol{a_0}=\psi(\ol{a_0}).$$
Thus, $\psi$ is surjective, hence an isomorphism, and so $A[\sqrt[e]{a}]$ is local.
\end{proof}

\begin{lemma}  \label{barRequidim}
Set $\bar R=R \otimes_A \bar A=R/IR$.  Let $\bar P\subset \bar R$ be a prime ideal  and set
$\bar\p=\bar P\cap \bar A$. Then $\Ht \bar\p=\Ht \bar P $. Furthermore, the ring $\bar R$ 
is equidimensional, complete, and local. In particular, Fact~\ref{HH} applies with $T=\ol R$.
\end{lemma}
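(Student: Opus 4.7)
The plan is to verify the easy structural properties of $\bar R$ first, then establish equidimensionality via going-down, and finally deduce the height equality using catenary-ness.

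First I would check that $\bar R = R/IR$ is local and complete: $R$ is local by Lemma~\ref{Rlocal}, so its quotient $\bar R$ is local, and $R$ is complete, so $\bar R$ is complete in the adic topology. Moreover, $R$ is module finite over $A$ (as it is the integral closure of $A$ in a finite field extension of $K$, and $A$ is a complete local, hence Nagata, ring), so $\bar R = R \otimes_A \bar A$ is module finite over $\bar A$; in particular $\dim \bar R \leq \dim \bar A$.

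The main step, which I expect to be the technical heart of the argument, is to show that every minimal prime $P$ of $IR$ contracts to $I$ in $A$. The tool is the going-down theorem for the integral extension $A \subseteq R$, which applies because $A$ is a normal domain and $R$ is a domain. If $\p = P \cap A$ strictly contained $I$ were false, that is if $I \subsetneq \p$, then going-down would produce a prime $Q \subsetneq P$ with $Q \cap A = I$, whence $IR \subseteq Q \subsetneq P$ contradicts the minimality of $P$ over $IR$. Consequently $P \cap A = I$, and $R/P$ is a local domain that is a finite integral extension of $\bar A = A/I$, giving $\dim(R/P) = \dim \bar A$. Since every minimal prime of $\bar R$ is of the form $P/IR$ for such a $P$, each has coheight $\dim \bar A$, so $\bar R$ is equidimensional of dimension $\dim \bar A$.

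For the height equality, both $A$ and $R$ are complete local rings, hence universally catenary by Ratliff's theorem, so their quotients $\bar A$ and $\bar R$ are catenary. In a catenary local equidimensional ring $S$ one has $\Ht(Q) + \dim(S/Q) = \dim S$ for every prime $Q$, by picking a minimal prime of $S$ below $Q$ of maximal coheight and applying the dimension formula for a chain through $Q$. Applying this to both $\bar R$ (catenary, equidimensional) and $\bar A$ (catenary, a domain) and using that the finite extension $\bar A \hookrightarrow \bar R$ satisfies going-up so $\dim(\bar R/\bar P) = \dim(\bar A/\bar \p)$, I obtain
\[
\Ht(\bar P) = \dim \bar R - \dim(\bar R/\bar P) = \dim \bar A - \dim(\bar A/\bar \p) = \Ht(\bar \p).
\]
The hypotheses of Fact~\ref{HH} then hold with $T = \bar R$, since $\bar R$ has been shown to be complete, local, and equidimensional.
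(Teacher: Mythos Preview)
Your proposal is correct, and it reaches the same conclusions as the paper with essentially the same ingredients (going-up for the finite map $\bar A\to\bar R$, going-down for the normal integral extension $A\subseteq R$), but you organize them in the reverse order. The paper first proves the height equality directly: $\Ht\bar\p\geq\Ht\bar P$ from going-up, and $\Ht\bar\p\leq\Ht\bar P$ by lifting a saturated chain $0=\bar\p_0\subsetneq\cdots\subsetneq\bar\p_h=\bar\p$ to a chain $I=\p_0\subsetneq\cdots\subsetneq\p_h=\p$ in $A$, then using going-down for $A\to R$ to produce a chain $P_0\subsetneq\cdots\subsetneq P_h=P$ in $R$ with $P_0\supseteq IR$, hence a chain of length $h$ below $\bar P$ in $\bar R$. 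Equidimensionality then falls out immediately from the case $\Ht\bar P=0$. You instead establish equidimensionality first (minimal primes of $IR$ contract to $I$ by going-down) and then invoke the catenary property of complete local rings to get the height formula $\Ht Q+\dim(S/Q)=\dim S$ on both sides. Your route is arguably slicker once one is willing to use Ratliff/Cohen structure theory for catenarity; the paper's explicit chain-lifting is more elementary and self-contained. One small point worth making explicit in your write-up: the injectivity of $\bar A\to\bar R$ (which you use implicitly when writing $\bar A\hookrightarrow\bar R$) follows because $A$ sits in $R$ as the degree-zero direct summand, so this splitting survives tensoring with $\bar A$.
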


\begin{proof}  
By Lemma~\ref{Rlocal}, the ring $\bar R$ is local and complete.  Since $A$ embeds in $R$ as the degree zero
direct summand, we see that $\bar A$ embeds in $\bar R$ as the degree zero direct summand.
It follows that the induced map $\bar A\to\bar R$ is a module finite monomorphism.  Hence, going-up holds, and as a result $\Ht \bar \p \geq\Ht \bar P$.

We consider the following commutative diagram of local ring homomorphisms,
$$\xymatrix{
A\ar[r]\ar[d]_{\tau} & R \ar[d]^{\pi} \\
\bar A\ar[r] & \bar R}$$
where the vertical maps are the natural surjections,
and the horizontal maps are the inclusions. Set $P=\pi^{-1}(\bar P)$
and $\p=\tau^{-1}(\bar \p)=P\cap A$.

Note that the map $A\to R$ 
satisfies going-down because $A$ is integrally closed and $R$ is an integral domain.
Let $0=\bar{\p}_0\subset\bar{\p}_1\subset\cdots\subset\bar{\p}_h=\bar\p$ be a chain of 
prime ideals of $\bar A$ such that $h=\Ht \bar \p$. It follows that
$I=\p_0\subset\p_1\subset\cdots\subset\p_h=\p$ is a chain of prime ideals of $A$.
By going-down, there are prime ideals
$P_0\subset P_1\subset\cdots\subset P_h=P$ of $R$ such that
$P_i\cap A=\p_i$ for each $i$. In particular, we have $I=\p_0=P_0\cap A$ and so
$IR\subseteq P_0$. It follows that the ideals
$\bar{P_i}=P_i\bar R\subset\bar R$ are prime and form a chain
$\bar{P_0}\subset\bar{P_1}\subset\cdots\subset\bar{P_h}=\bar P$
and thus $\Ht \bar\p=h\leq\Ht \bar P$, as desired.

To show that $\bar R$ is equidimensional, assume that $\bar P$ is a minimal prime
of $\bar R$.  Then $\Ht \bar\p=\Ht \bar P=0$ and so $\bar\p=0$. Thus, the induced
map $\bar A\to \bar R/\bar P$ is a module finite monomorphism.
It follows that $\dim(\bar R/\bar P)=\dim \bar A=\dim \bar R$.
\end{proof}

\begin{lemma}  \label{barAtobarRetalecodimone}
For any prime ideal $\bar P\subset\bar R$ of height less than or equal to one and $\bar \p = \bar P \cap \bar A$, the extensions $\bar{A}_{\bar\p}\to\bar R_{\bar \p}$ and $\bar{A}_{\bar\p}\to\bar R_{\bar P}$ are \'etale.  In particular, the extension $\bar A\to \bar R$ is \'etale in codimension one.  Moreover, the ring $\bar R$ satisfies the Serre condition $(R_1)$.
\end{lemma}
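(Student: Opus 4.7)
My plan is to reduce to Lemma~\ref{AtoRetalecodimone} by lifting the primes to $R$ and $A$ and then base-changing along the surjection $A\to\bar A$, exploiting the fact that étale extensions are stable under base change.

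First I would set up the lifts. Let $\bar P\subset\bar R$ be a prime ideal with $\Ht \bar P\leq 1$, and set $\bar\p=\bar P\cap\bar A$. Let $P\subset R$ be the preimage of $\bar P$ under $R\to\bar R=R/IR$, and $\p=P\cap A$, which is also the preimage of $\bar\p$ under $A\to\bar A$. By Lemma~\ref{barRequidim}, $\Ht \bar P=\Ht\bar\p\leq 1$.

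The key input is the regularity of $A_\p$. Since $\bar A$ is normal, it satisfies $(R_1)$, so $\bar A_{\bar\p}$ is regular (it is a field if $\Ht\bar\p=0$, and a DVR if $\Ht\bar\p=1$). The localized map $A_\p\to\bar A_{\bar\p}$ has finite projective dimension because $\pd_A \bar A<\infty$. Applying~\cite[Theorem 6.1(1)]{AFHa}, we deduce that $A_\p$ is regular. Now Lemma~\ref{AtoRetalecodimone} applies, yielding that $A_\p\to R_\p$ and $A_\p\to R_P$ are étale.

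Next I would base change. Since localization commutes with quotient, we have
\[
\bar R_{\bar\p}= R_\p\otimes_{A_\p}\bar A_{\bar\p}=R_\p/IR_\p,\qquad
\bar R_{\bar P}= R_P\otimes_{A_\p}\bar A_{\bar\p}=R_P/IR_P.
\]
Étale morphisms are preserved by arbitrary base change, so from the étaleness of $A_\p\to R_\p$ and $A_\p\to R_P$ we obtain that $\bar A_{\bar\p}\to\bar R_{\bar\p}$ and $\bar A_{\bar\p}\to \bar R_{\bar P}$ are étale, as desired. This gives the étale-in-codimension-one conclusion immediately.

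Finally, for the Serre condition $(R_1)$ on $\bar R$: take any $\bar P$ with $\Ht\bar P\leq 1$. Then $\bar A_{\bar\p}$ is regular by the argument above, and since $\bar A_{\bar\p}\to\bar R_{\bar P}$ is étale (in particular, flat with regular fibers), $\bar R_{\bar P}$ is regular. Hence $\bar R$ satisfies $(R_1)$. The only step requiring any care is invoking~\cite[Theorem 6.1(1)]{AFHa} to pass regularity from $\bar A_{\bar\p}$ back up to $A_\p$; the rest is a direct application of Lemma~\ref{AtoRetalecodimone} and the base-change stability of étale maps, so I do not anticipate a genuine obstacle.
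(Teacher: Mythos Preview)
Your proof is correct and follows essentially the same approach as the paper: lift $\bar P$ and $\bar\p$ to $P$ and $\p$, use Lemma~\ref{barRequidim} for the height equality, invoke~\cite[Theorem 6.1(1)]{AFHa} to get regularity of $A_\p$ from that of $\bar A_{\bar\p}$, apply Lemma~\ref{AtoRetalecodimone}, and then base-change to conclude. The only cosmetic difference is that you spell out the tensor identifications $\bar R_{\bar\p}\cong R_\p\otimes_{A_\p}\bar A_{\bar\p}$ and $\bar R_{\bar P}\cong R_P\otimes_{A_\p}\bar A_{\bar\p}$ explicitly, which the paper leaves implicit.
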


\begin{proof}  
Let $\bar P\subset\bar R$ be a prime of height less than or equal to one.
Let $P$, $\bar \p$ and $\p$ be as in the proof of 
Lemma~\ref{barRequidim}, hence $\Ht \bar\p=\Ht \bar P$. Since $\bar A$ is a normal domain,
it follows that $\bar A_{\bar\p}$ is regular.   The mapping $A_{\frak p} \to \bar A_{\frak p} = \bar A_{\bar{\frak p}}$ has finite projective dimension since, by hypothesis, $A \to \bar A$ has finite projective dimension.  It now follows from \cite[Theorem 6.1(1)]{AFHa} that $A_{\frak p}$ is regular.  Next, by Lemma~\ref{AtoRetalecodimone}, the extensions $A_{\p} \to R_{\p}$ and $A_{\p}\to R_{P}$ are \'etale. It follows by base-change 
that the extensions $\bar{A}_{\bar\p}\to\bar R_{\bar \p}$ and $\bar{A}_{\bar\p}\to\bar R_{\bar P}$ are \'etale.
Thus, the extension $\bar A\to \bar R$ is \'etale in codimension one.
Moreover, the extension $\bar{A}_{\bar\p}\to\bar R_{\bar P}$ has a 
regular closed fibre (in fact, the closed fibre is a field because the extension
is \'etale).  Since the extension is also flat, as it is \'etale, and $\bar A_{\bar\p}$ is regular, it follows that $\bar R_{\bar P}$ is regular.  Therefore, $\bar R$ satisfies $(R_1)$.
\end{proof}

\begin{lemma}  \label{kernelbidualmap}
For the  biduality map $\sigma \colon \bar R \to \ddual{\bar R}{\bar A}$,
we have $\ker \sigma =  \nl(\bar R)  =  j(\bar R)$; see Definition~\ref{j-ideal}.
\end{lemma}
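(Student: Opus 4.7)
The plan is to show that all three ideals equal the $\bar A$-torsion submodule $T := \ker(\bar R \to \bar R \otimes_{\bar A} K)$ of $\bar R$, where $K$ denotes the fraction field of the domain $\bar A$. The preliminary step is to establish that $\bar R_K := \bar R \otimes_{\bar A} K$ is a finite direct product of fields. By Lemma~\ref{barRequidim}, heights are preserved under contraction from $\bar R$ to $\bar A$, so the primes of $\bar R_K$ correspond bijectively to the primes $\bar P \subset \bar R$ satisfying $\bar P \cap \bar A = 0$, namely the minimal primes $P_1,\ldots,P_s$ of $\bar R$. Hence $\bar R_K \cong \prod_{i=1}^s \bar R_{P_i}$, and by the $(R_1)$ conclusion of Lemma~\ref{barAtobarRetalecodimone}, each $\bar R_{P_i}$ is regular of dimension zero, hence a field. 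In particular $\bar R_K$ is a reduced, finite-dimensional $K$-algebra.

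Given this, I would verify $\nl(\bar R) = T$ as follows: the inclusion $\nl(\bar R) \subseteq T$ is automatic because $\bar R_K$ is reduced, and conversely, if $sr = 0$ for some nonzero $s \in \bar A$, then $s \notin P_i$ for every $i$ (since $P_i \cap \bar A = 0$), forcing $r \in P_i$, and so $r \in \bigcap_i P_i = \nl(\bar R)$. Next, for $j(\bar R) = T$, I would use equidimensionality of $\bar R$ (again from Lemma~\ref{barRequidim}) to argue that $\dim(\bar R/\ann(r)) = \dim \bar R$ if and only if $\ann(r)$ is contained in some minimal prime $P_i$, which, since $\bar R_{P_i}$ is a field, is in turn equivalent to $r/1 \neq 0$ in $\bar R_{P_i}$. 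Thus $r \in j(\bar R)$ if and only if $r$ maps to zero in every $\bar R_{P_i}$, that is, $r \in T$.

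Finally, for $\ker \sigma = T$: the containment $T \subseteq \ker \sigma$ is the easy direction, since any $f \in \Hom_{\bar A}(\bar R, \bar A)$ satisfies $s f(r) = f(sr) = 0$ when $sr = 0$, forcing $f(r) = 0$ as $s$ is a nonzerodivisor in the domain $\bar A$. For the reverse containment I would use flat base change: because $\bar R$ is finitely presented over the noetherian ring $\bar A$ and $K$ is flat over $\bar A$, there is a natural isomorphism $\Hom_{\bar A}(\bar R, \bar A) \otimes_{\bar A} K \cong \Hom_K(\bar R_K, K)$. If $r \notin T$, then its image $r_K$ is nonzero in the finite-dimensional $K$-vector space $\bar R_K$, so some $K$-linear functional $\psi$ satisfies $\psi(r_K) \neq 0$; writing $\psi = f \otimes (1/s)$ via the isomorphism produces $f \in \Hom_{\bar A}(\bar R, \bar A)$ and $0 \neq s \in \bar A$ with $f(r)/s \neq 0$ in $K$, hence $f(r) \neq 0$ in $\bar A$, giving $r \notin \ker \sigma$. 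I expect the main obstacle to be the structural preliminary --- the identification $\bar R_K \cong \prod_i \bar R_{P_i}$ together with each factor being a field --- but this falls out cleanly from the preceding lemmas, and the only other nontrivial input is the standard flat base-change formula for $\Hom$ applied to finitely presented modules.
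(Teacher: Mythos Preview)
Your proof is correct and uses the same ingredients as the paper's (the \'etale structure of $\bar R$ over $\bar A$ at height $\leq 1$ from Lemma~\ref{barAtobarRetalecodimone}, equidimensionality of $\bar R$ from Lemma~\ref{barRequidim}, and the fact that $\bar A$ is a domain); the only difference is organizational, as you route all three equalities through the $\bar A$-torsion submodule $T$, whereas the paper establishes the pairwise inclusions among $\nl(\bar R)$, $j(\bar R)$, and $\ker\sigma$ directly. The underlying computations coincide---for instance, your flat base-change argument for $\ker\sigma\subseteq T$ is exactly the paper's observation that $\sigma_{(0)\bar A}$ is an isomorphism because $\bar R_{(0)\bar A}$ is free over the field $\bar A_{(0)\bar A}$.
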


\begin{proof}  
Recall that $j(\bar R) = \{\bar r \in \bar R : \dim(\bar R/ \ann_{\bar R}(\bar r)) < \dim \bar R \}$ is an ideal of $\bar R$. 

First of all, to see that $\nl(\bar R)\subseteq j(\bar R)$, let $\bar r\in \nl(\bar R)$. By the previous lemma, 
the extension $\bar A_{(0)\bar A}\to \bar R_{(0)\bar A}$ is \'etale.  It is also module finite. 
Since $\bar A_{(0)\bar A}$ is a field, the ring
$\bar R_{(0)\bar A}$ is a finite product of fields; hence, it is reduced.  Thus, the image of $\bar r$ in
$\bar R_{(0)\bar A}$ is zero. In other words, 
we have $(\bar r\bar R)_{(0)\bar A}=\bar r\bar R_{(0)\bar A}=(0)$
and so $(0)\bar A\not\in\supp_{\bar A}(\bar r\bar R)$. It follows that
$\dim_{\bar A}(\bar r\bar R)<\dim(\bar A)$.  Now $\dim_{\bar R}(\bar r\bar R) = \dim_{\bar A}(\bar r\bar R)$ and $\dim \bar R =\dim \bar A$, hence $\bar r\in j(\bar R)$.

On the other hand, let $\bar r\in j(\bar R)$.  We need to show that $\bar r\in\bar P$ for each
prime ideal $\bar P\subset\bar R$.  If it is not, then $\bar r\not\in\bar{P_0}$, for some minimal prime $\bar {P_0}$ of $\bar R$.  The element $\bar r$ represents a unit in $\bar R_{\bar{P_0}}$. In particular, we have
$0\neq\bar r/1\in\bar R_{\bar{P_0}}$ and so $\bar{P_0}\in\supp_{\bar R}(\bar r\bar R)$. 
It follows that $\dim_{\bar R}(\bar r \bar R) = \dim \bar R$ since $\bar R$ is equidimensional.
This contradicts the fact that $\bar r\in j(\bar R)$.
Therefore, $j(\bar R) \subseteq \nl(\bar R)$.

We now show that $\ker \sigma=j(\bar R)$. 
The module $\bar R_{(0)\bar A}$ is free over $\bar A_{(0)\bar A}$, hence the localized map
$\sigma_{(0)\bar A}\colon \bar R_{(0)\bar A}\to(\ddual{\bar R}{\bar A})_{(0)\bar A}$
is an isomorphism.  Thus, $(\ker \sigma)_{(0)\bar A} \cong \ker(\sigma_{(0)\bar A}) = (0)$, so $\dim_{\bar A}(\ker \sigma)<\dim \bar A$.  Now $\dim_{\bar R}(\ker \sigma) = \dim_{\bar A}(\ker \sigma)$ and $\dim \bar R =\dim \bar A$, hence $\ker \sigma \subseteq j(\bar R)$. 

For the reverse containment, let $\bar r\in j(\bar R)$. Then 
$\dim_{\bar A}(\bar r\bar R)<\dim \bar A$ as above, which means that $\bar r\bar R_{(0)\bar A}=(0)$.
It follows that there exists a non-zero element $\bar b\in\bar A$ such that $\bar b\bar r=0$, and so
$\dim_{\bar A}(\bar r\bar A)<\dim \bar A$. 
For each $\psi\in\Hom_{\bar A}(\bar R,\bar A)$, 
$\dim_{\bar A}(\psi(\bar r\bar A))\leq\dim_{\bar A}(\bar r\bar A)<\dim \bar A$, since  $\bar r\bar A \twoheadrightarrow \psi(\bar r\bar A)$ via $\psi$. 
However, $\psi(\bar r\bar A)$ is an $\bar A$-submodule of $\bar A$; that is,
an ideal of $\bar A$. If $\psi(\bar r\bar A) \neq (0)$, then $\psi(\bar r\bar A) \hookrightarrow \psi(\bar r\bar A)_{(0)\bar A}$ because $\bar A$ is a domain; but this would mean that $\psi(\bar r\bar A)_{(0)\bar A} \neq 0$, hence $\dim_{\bar A}(\psi(\bar r\bar A)) = \dim \bar A$.   
This is a contradiction.  Therefore, $\psi(\bar r\bar A)=(0)$.
This says that $\psi(\bar r)=0$ for all $\psi\in\Hom_{\bar A}(\bar R,\bar A)$.
By definition, $\sigma(\bar r)\colon \Hom_{\bar A}(\bar R,\bar A)\to\bar A$ is the 
map $\sigma(\bar r)(\psi)=\psi(\bar r)=0$.  Thus, $\sigma(\bar r)=0$, which means that $\bar r\in\ker \sigma$, as desired.
\end{proof}

\begin{lemma}  \label{barRmodjR1} The ring $\bar R/j(\bar R)$ satisfies $(R_1)$, so its integral closure $(\bar R/j(\bar R))'$ 
in its total ring of quotients is its $S_2$-ification. 
\end{lemma}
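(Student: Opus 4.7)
The plan is to first establish the $(R_1)$ condition for $\bar R/j(\bar R)$ and then invoke Lemma~\ref{domainfact}. Observe that by Lemma~\ref{kernelbidualmap} we have $j(\bar R)=\nl(\bar R)$, so $\bar R/j(\bar R)$ is reduced. Moreover, Lemma~\ref{barRequidim} ensures that $\bar R$ is complete, local, and equidimensional, whence $\bar R/j(\bar R)$ inherits these properties, since minimal primes of $\bar R/\nl(\bar R)$ correspond bijectively to those of $\bar R$ and the quotients $\bar R/\bar P_0$ are unchanged. By the Cohen structure theorem $\bar R/j(\bar R)$ is a homomorphic image of a regular local ring, so it is catenary and admits a canonical module. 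Since it is reduced and equidimensional, Fact~\ref{fact1}(b) yields $j(\bar R/j(\bar R))=0$.

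For the $(R_1)$ condition, I would take a prime $\bar Q=\bar P/j(\bar R)$ of $\bar R/j(\bar R)$ with $\Ht_{\bar R/j(\bar R)}(\bar Q)\leq 1$ and establish the height identity $\Ht_{\bar R}(\bar P)=\Ht_{\bar R/j(\bar R)}(\bar Q)$ via the dimension formula for equidimensional catenary local rings. Specifically, for any minimal prime $\bar P_0\subseteq\bar P$ of $\bar R$, the ring $\bar R/\bar P_0$ is a catenary local domain of dimension $\dim\bar R$ (by equidimensionality), so a maximal chain of primes from $(0)$ to $\fm/\bar P_0$ passing through $\bar P/\bar P_0$ has total length $\dim\bar R$, giving $\Ht_{\bar R/\bar P_0}(\bar P/\bar P_0)+\dim(\bar R/\bar P)=\dim\bar R$. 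Taking the maximum over such $\bar P_0$ yields $\Ht_{\bar R}(\bar P)=\dim\bar R-\dim(\bar R/\bar P)$, and the analogous argument for $\bar R/j(\bar R)$ (which is also equidimensional catenary local of the same dimension) gives $\Ht_{\bar R/j(\bar R)}(\bar Q)=\dim\bar R-\dim(\bar R/\bar P)$. Hence $\Ht_{\bar R}(\bar P)\leq 1$, so by $(R_1)$ for $\bar R$ (Lemma~\ref{barAtobarRetalecodimone}) the localization $\bar R_{\bar P}$ is regular, hence a domain; it follows that $j(\bar R)_{\bar P}=\nl(\bar R)_{\bar P}=0$ and $(\bar R/j(\bar R))_{\bar Q}\cong\bar R_{\bar P}$ is regular.

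Having verified $(R_1)$, $j(\bar R/j(\bar R))=0$, and the existence of a canonical module, Lemma~\ref{domainfact} applies directly to $\bar R/j(\bar R)$ and yields that the integral closure $(\bar R/j(\bar R))'$ in its total ring of fractions coincides with its $S_2$-ification. The main technical point is the height comparison: one must know that passing to the quotient by $j(\bar R)=\nl(\bar R)$ does not drop heights of primes below the range in which $(R_1)$ is guaranteed for $\bar R$, and this is exactly where the equidimensionality from Lemma~\ref{barRequidim} combined with catenarity is used.
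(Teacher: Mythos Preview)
Your proof is correct and follows essentially the same strategy as the paper: establish $(R_1)$ for $\bar R/j(\bar R)$ by showing that a height-$\leq 1$ prime $\bar Q=\bar P/j(\bar R)$ corresponds to a height-$\leq 1$ prime $\bar P$ of $\bar R$, use Lemma~\ref{barAtobarRetalecodimone} to get $\bar R_{\bar P}$ regular, conclude $j(\bar R)_{\bar P}=0$ so that $(\bar R/j(\bar R))_{\bar Q}\cong\bar R_{\bar P}$, and then invoke Lemma~\ref{domainfact}. Two minor technical differences: the paper obtains the height equality in one line from $j(\bar R)=\nl(\bar R)$ (since $\Spec(\bar R/\nl(\bar R))\to\Spec(\bar R)$ is an order-isomorphism), whereas your catenarity/dimension-formula argument, while correct, is more than is needed; conversely, your argument that $j(\bar R)_{\bar P}=\nl(\bar R)_{\bar P}=\nl(\bar R_{\bar P})=0$ is slightly more direct than the paper's route through the canonical-module description $j(\bar R)_{\bar P}\cong j(\bar R_{\bar P})$.
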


\begin{proof}  Take $\tilde{P} = \bar P/j(\bar R)$ a height one prime of $\bar R/j(\bar R)$.  Then $\bar P$ is a height one prime of $\bar R$
since $j(\bar R)=\nl(\bar R)$, and $\big(\bar R/j(\bar R)\big)_{\tilde{P}} = \bar R_{\bar P}/(j(\bar R))_{\bar P}.$  Since $\bar R$ is complete and local, it has a canonical module $\omega_{\bar R}$.  Recalling Fact \ref{fact1}, $j(\bar R) = \ker(\bar R \to \Hom_{\bar R}(\omega_{\bar R}, \omega_{\bar R}))$, hence $j(\bar R)_{\bar Q} \cong j(\bar R_{\bar Q})$ for any prime ideal $\bar Q$ of $\bar R$.  
Note that this uses the fact that $\bar R$ is equidimensional; see Lemma~\ref{barRequidim} and~\cite[(2.2i)]{HH}.
In particular, since $\bar R$ satisfies the $(R_1)$ condition as per Lemma~\ref{barAtobarRetalecodimone},
we have $j(\bar R_{\bar P}) = (0)$ and hence $\big(\bar R/j(\bar R)\big)_{\tilde{P}} \cong \bar R_{\bar P}$ is regular.

Thus, $\bar R/j(\bar R)$ satisfies $(R_1)$.
Lemma~\ref{domainfact} implies that the integral closure of $\bar R/j(\bar R)$  in its total ring of quotients is the $S_2$-ification of $\bar R/j(\bar R)$, that is, of $\bar R$. 
\end{proof}

\begin{lemma}  \label{barRfree} The $\bar A$-module $\ddual{\bar R}{\bar A}$ is free of rank $e$ and thus satisfies $(S_2)$ over $\bar A$.
\end{lemma}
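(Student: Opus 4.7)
The plan is to exploit the $A$-module decomposition $R=\bigoplus_{i=0}^{e-1}\fa^{(i)}$ established in paragraph \ref{assumptions}. Because tensor product distributes over direct sums, base-changing to $\bar A$ gives an $\bar A$-module isomorphism
$$\bar R \;=\; R\otimes_A\bar A \;\cong\; \bigoplus_{i=0}^{e-1}\bigl(\fa^{(i)}\otimes_A\bar A\bigr).$$
Since the functor $\Hom_{\bar A}(-,\bar A)$ is additive and therefore commutes with finite direct sums, applying it twice yields
$$\ddual{\bar R}{\bar A} \;\cong\; \bigoplus_{i=0}^{e-1}\ddual{\bigl(\fa^{(i)}\otimes_A\bar A\bigr)}{\bar A}.$$
It therefore suffices to prove that each of the $e$ summands is isomorphic to $\bar A$.

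To establish this, I would invoke the well-defined group homomorphism $\Cl(A)\to\Cl(\bar A)$ sending $[\fb]\mapsto[\ddual{(\fb\otimes_A\bar A)}{\bar A}]$, which is available because $A\to\bar A$ has finite projective dimension by hypothesis. By Definition \ref{defn0102}, $[\fa^{(i)}]=i[\fa]$ in $\Cl(A)$, so the image of $[\fa^{(i)}]$ in $\Cl(\bar A)$ equals $i$ times the image of $[\fa]$. The assumption in paragraph \ref{assumptions} says that $[\fa]$ lies in the kernel of this map, so each $[\ddual{(\fa^{(i)}\otimes_A\bar A)}{\bar A}]$ is trivial in $\Cl(\bar A)$; equivalently, $\ddual{(\fa^{(i)}\otimes_A\bar A)}{\bar A}\cong\bar A$ as $\bar A$-modules for every $i=0,1,\dots,e-1$.

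Combining the two steps produces $\ddual{\bar R}{\bar A}\cong\bar A^{e}$, a free $\bar A$-module of rank $e$. Since $\bar A$ is normal, it satisfies $(S_2)$, and hence so does every free $\bar A$-module, completing the proof. I do not anticipate a substantive obstacle: the direct sum decomposition and the additivity of double dualization are formal, and the nontriviality assumption on $[\fa]$ is used only in its weakest form, namely that its image in $\Cl(\bar A)$ vanishes. The only point requiring a moment's care is recording that the well-definedness of the induced map on class groups (which packages the fact that $\ddual{(\fa^{(i)}\otimes_A\bar A)}{\bar A}$ really is a rank-one reflexive $\bar A$-module) relies on $A\to\bar A$ having finite projective dimension, an assumption present throughout the section.
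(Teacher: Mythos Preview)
Your proof is correct and follows essentially the same approach as the paper's own argument: decompose $\bar R$ as $\bigoplus_{i=0}^{e-1}(\fa^{(i)}\otimes_A\bar A)$, apply the double dual summand-by-summand, and use that $\gamma\colon\Cl(A)\to\Cl(\bar A)$ is a group homomorphism with $[\fa]\in\ker\gamma$ to conclude each $\ddual{(\fa^{(i)}\otimes_A\bar A)}{\bar A}\cong\bar A$. The paper's proof is a bit more terse and does not explicitly mention that $\Hom_{\bar A}(-,\bar A)$ commutes with finite direct sums, but the substance is identical.
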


\begin{proof}  Since $[\frak a] \in \ker(\Cl(A) \stackrel{\gamma}{\to} \Cl(\bar A))$, we have $\ddual{(\frak a \otimes_A \bar A)}{\bar A} \cong (\bar{\alpha})$, for some  $\bar{\alpha} \in \bar A$.  Thus, using the fact that $\gamma$ is a group homomorphism, $\ddual{(\frak a^{(t)} \otimes_A \bar A)}{\bar A} = \gamma([\frak a^{(t)}]) = \big(\gamma([\frak a]) \big)^t \cong (\bar{\alpha}^t)$, for all $t$; i.e.,
each component of $\ddual{\bar R}{\bar A}$ is a free $\bar A$-module.  Consequently, $\ddual{\bar R}{\bar A}$ is free of rank $e$ and satisfies the $(S_2)$ condition, since $\bar A$ is $(S_2)$.
\end{proof}

\begin{lemma} \label{lem130109a} 
For each prime $\tilde P$ in $(\bar R/j(\bar R))'$, we have $$\Ht(\tilde P\cap\bar A)=\Ht(\tilde P\cap(\bar R/j(\bar R)))=\Ht \tilde P.$$
Moreover, $(\bar R/j(\bar R))'$ satisfies $(S_2)$ as an $\bar A$-module and as a ring.
\end{lemma}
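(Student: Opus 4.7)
The plan is to first establish the height equalities and then use them to deduce both $(S_2)$ conditions in a uniform fashion.

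For the heights, I will realize $(\bar R/j(\bar R))'$ as the top of a chain of module-finite injections
\[
\bar A \hookrightarrow \bar R/j(\bar R) \hookrightarrow (\bar R/j(\bar R))'.
\]
The first inclusion is module-finite as a quotient of the module-finite extension $\bar A \hookrightarrow \bar R$, and injective because $\bar A \cap j(\bar R) = \bar A \cap \nl(\bar R) = 0$ in the domain $\bar A$ (using Lemma~\ref{kernelbidualmap}); the second is module-finite by Fact~\ref{fact2}. The equality $\Ht \tilde P = \Ht(\tilde P \cap (\bar R/j(\bar R)))$ will follow from \cite[Proposition 3.5(a)]{HH}, exactly as in the proof of Lemma~\ref{domainfact}. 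For $\Ht(\tilde P \cap (\bar R/j(\bar R))) = \Ht(\tilde P \cap \bar A)$, I will observe that $j(\bar R) = \nl(\bar R)$ sits inside every prime of $\bar R$, so passage to $\bar R/j(\bar R)$ induces a height-preserving bijection on $\Spec$; combining this with the equality $\Ht \bar P = \Ht(\bar P \cap \bar A)$ from Lemma~\ref{barRequidim} yields the claim. Composing the two gives the full chain of equalities.

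The ring $(S_2)$ property will then be immediate from Fact~\ref{fact2}, since $(\bar R/j(\bar R))'$ is the $S_2$-ification of $\bar R/j(\bar R)$ by Lemma~\ref{barRmodjR1}. For the $\bar A$-module $(S_2)$ condition, I will write $M = (\bar R/j(\bar R))'$, fix $\bar\p \in \supp_{\bar A} M$, and compute the dimension and depth of the $\bar A_{\bar\p}$-module $M_{\bar\p}$. Faithfulness of $M$ over the domain $\bar A$, which holds because $M$ contains the identity, yields $\dim_{\bar A_{\bar\p}} M_{\bar\p} = \dim \bar A_{\bar\p} = \Ht \bar\p$. For the depth, $M_{\bar\p}$ is a semi-local ring module-finite over $\bar A_{\bar\p}$ whose maximal ideals correspond to primes $\tilde P$ of $M$ with $\tilde P \cap \bar A = \bar\p$, so the standard depth comparison for module-finite extensions gives
\[
\dpth_{\bar A_{\bar\p}} M_{\bar\p} \;=\; \inf_{\tilde P \cap \bar A = \bar\p}\dpth M_{\tilde P}.
\]
By the ring $(S_2)$ property and the height equality $\Ht \tilde P = \Ht \bar\p$, each $\dpth M_{\tilde P}$ is bounded below by $\min(2,\Ht\bar\p) = \min(2,\dim_{\bar A_{\bar\p}} M_{\bar\p})$, giving the $\bar A$-module $(S_2)$ condition.

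The one technical subtlety I expect is the semi-local depth-comparison step; it is a classical fact but must be applied with the semi-local structure of $M_{\bar\p}$ in mind rather than the local version. Once it is in hand, the remainder of the argument is a direct chain of applications of results already assembled in the paper.
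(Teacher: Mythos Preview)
Your argument is correct. The height portion matches the paper's proof exactly: the equality $\Ht(\tilde P\cap(\bar R/j(\bar R)))=\Ht\tilde P$ comes from \cite[Proposition 3.5(a)]{HH}, and the remaining equality is deduced from Lemma~\ref{barRequidim} after noting that $j(\bar R)=\nl(\bar R)$ makes $\Spec(\bar R)\to\Spec(\bar R/j(\bar R))$ a height-preserving bijection.

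Where you diverge is in the $(S_2)$ conclusions. The paper handles both at once by citing \cite[Corollaire (5.7.11)]{Gr4-2}: knowing that $(\bar R/j(\bar R))'$ is $(S_2)$ as a $\bar R/j(\bar R)$-module (from the definition of $S_2$-ification), the height equalities let one transfer this to $(S_2)$ as an $\bar A$-module and as a ring in one stroke. You instead extract the ring $(S_2)$ directly from Fact~\ref{fact2} and then run the depth transfer by hand: faithfulness over the domain $\bar A$ gives $\dim_{\bar A_{\bar\p}}M_{\bar\p}=\Ht\bar\p$, and the semi-local depth identity $\dpth_{\bar A_{\bar\p}}M_{\bar\p}=\min_{\tilde P}\dpth M_{\tilde P}$ combined with the ring $(S_2)$ and the height equality closes the loop. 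Your route is more explicit and avoids the EGA reference, at the cost of unpacking what \cite[(5.7.11)]{Gr4-2} encapsulates; the paper's route is terser but relies on the reader knowing that result. Both are sound, and your acknowledged ``technical subtlety'' about the semi-local depth comparison is indeed standard (it follows, e.g., from $H^i_{\bar\p}(M_{\bar\p})\cong\bigoplus_{\tilde P}H^i_{\tilde P}(M_{\tilde P})$ for the finitely many maximal ideals $\tilde P$ of $M_{\bar\p}$).
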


\begin{proof}  
The equality $\Ht(\tilde P\cap(\bar R/j(\bar R)))=\Ht \tilde P$ is from~\cite[Proposition 3.5(a)]{HH}.
Lemma~\ref{barRequidim} implies that $\Ht(\tilde P\cap\bar A)=\Ht \tilde P$. Since $(\bar R/j(\bar R))'$ satisfies $(S_2)$ as an $\bar R/j(\bar R)$-module by Lemma~\ref{barRmodjR1},
it follows from~\cite[Corollaire (5.7.11)]{Gr4-2} that $(\bar R/j(\bar R))'$ satisfies $(S_2)$ as an $\bar A$-module and as a ring.
\end{proof}

\begin{lemma} \label{primedualiso} There are $\bar A$-isomorphisms $(\bar R/j(\bar R))' \cong \ddual{(\bar R/j(\bar R))}{\bar A}\cong\ddual{\bar R}{\bar A}$.
\end{lemma}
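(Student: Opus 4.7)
The plan is to prove the two isomorphisms separately, starting with the easier second one. For $\ddual{(\bar R/j(\bar R))}{\bar A}\cong\ddual{\bar R}{\bar A}$, I would apply $\Hom_{\bar A}(-,\bar A)$ to the short exact sequence $0\to j(\bar R)\to\bar R\to\bar R/j(\bar R)\to 0$. Since $j(\bar R)=\nl(\bar R)$ is $\bar A$-torsion (the argument in the proof of Lemma~\ref{kernelbidualmap} already shows $\nl(\bar R)_{(0)\bar A}=0$ because $\bar R_{(0)\bar A}$ is a product of fields), and since $\bar A$ is a domain, every $\bar A$-homomorphism $j(\bar R)\to\bar A$ vanishes, so $\dual{(j(\bar R))}{\bar A}=0$. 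The dualized sequence then yields an isomorphism $\dual{(\bar R/j(\bar R))}{\bar A}\xrightarrow{\sim}\dual{\bar R}{\bar A}$, and dualizing once more produces the desired isomorphism of double duals.

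For the first isomorphism $(\bar R/j(\bar R))'\cong\ddual{(\bar R/j(\bar R))}{\bar A}$, the strategy is to realize both sides as reflexive $\bar A$-modules extending $\bar R/j(\bar R)$ with cokernel supported in codimension at least two over $\bar A$, and then invoke Fact~\ref{AG} to identify them. First I would show that $(\bar R/j(\bar R))'$ is reflexive as an $\bar A$-module: by Lemma~\ref{lem130109a} it satisfies $(S_2)$ over $\bar A$, it is torsion-free because it embeds in the total ring of fractions of $\bar R/j(\bar R)$, and a finitely generated torsion-free $(S_2)$-module over a normal domain is reflexive. Next I would observe that the inclusion $\bar R/j(\bar R)\hookrightarrow(\bar R/j(\bar R))'$ is an isomorphism in codimension one over $\bar A$; this follows from the defining property of the $S_2$-ification (the cokernel has support of codimension $\geq 2$ in $\bar R/j(\bar R)$) together with the height equalities supplied by Lemma~\ref{lem130109a}.

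Having established these two points, dualizing the inclusion produces an $\bar A$-module map $\ddual{(\bar R/j(\bar R))}{\bar A}\to\ddual{((\bar R/j(\bar R))')}{\bar A}\cong(\bar R/j(\bar R))'$, with the last identification coming from the reflexivity just proved. By construction this map is an isomorphism in codimension one over $\bar A$, since at a prime $\bar\p$ of $\bar A$ of height at most one both source and target reduce to $(\bar R/j(\bar R))_{\bar\p}$ (the double dual commutes with such localizations on finitely generated modules). Because the source is reflexive and the target is torsion-free, Fact~\ref{AG} promotes the codimension-one isomorphism to a genuine isomorphism, completing the proof.

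The main obstacle I anticipate is reconciling the ring-theoretic $S_2$-ification condition that defines $(\bar R/j(\bar R))'$ with the $\bar A$-module reflexivity needed to invoke Fact~\ref{AG}. Lemma~\ref{lem130109a} is precisely the bridge: it provides the height equalities $\Ht(\tilde P\cap\bar A)=\Ht(\tilde P\cap(\bar R/j(\bar R)))=\Ht\tilde P$ that equate codimension-$\geq 2$ support statements over the two rings, and it confirms that $(\bar R/j(\bar R))'$ satisfies $(S_2)$ both as a ring and as an $\bar A$-module. With that lemma in hand the rest of the argument becomes a clean application of the reflexive-hull characterization of the $S_2$-ification of a torsion-free module over a normal domain.
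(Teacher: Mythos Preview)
Your proposal is correct and follows essentially the same strategy as the paper: establish that $(\bar R/j(\bar R))'$ is reflexive as an $\bar A$-module via Lemma~\ref{lem130109a}, show the relevant maps are isomorphisms in codimension one over $\bar A$, and invoke Fact~\ref{AG}. Your argument for the second isomorphism---observing that $j(\bar R)$ is $\bar A$-torsion so that $\dual{(j(\bar R))}{\bar A}=0$ and the single duals already agree---is a slight shortcut over the paper's uniform codimension-one treatment (which instead shows $j(\bar R)_{\bar\p}=0$ for $\Ht\bar\p\leq 1$ and applies Fact~\ref{AG} to both double-dual maps at once), but the overall architecture is the same.
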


\begin{proof}  Let $\bar \p$ be a prime of height less than or equal to one in $\bar A$. Recall that $\bar A_{\bar \p} \to \bar R_{\bar \p}$ is \'etale, by Lemma \ref{barAtobarRetalecodimone}, therefore, $\bar R_{\bar \p}$ is free over $\bar A_{\bar \p}$.  The maximal ideals of $\bar R_{\bar \p}$ are of the form $\bar P \bar R_{\bar \p}$, where $\bar P \cap \bar A = \bar \p$, thus $\Ht_{\bar R} \bar P = 1$ by Lemma~\ref{barRequidim}.  Since $(\bar R_{\bar \p})_{\bar P \bar R_{\bar \p}} \cong \bar R_{\bar P}$ and $\bar R$ satisfies the ($R_1$) condition, $\bar R_{\bar P}$ is a regular local ring.  Therefore, $\bar R_{\bar \p}$ is a regular ring.   Moreover, $j(\bar R_{\bar P}) = 0$ since $\bar R_{\bar P}$ is a domain; hence, $j(\bar R_{\bar \p})_{\bar P \bar R_{\bar \p}} = 0$ for all 
$\bar P \bar R_{\bar \p}$ (as in the proof of Lemma \ref{barRmodjR1}).  We conclude that $j(\bar R)_{\bar \p} = 0$.  
Consequently, $(\bar R/j(\bar R))_{\bar \p} = \bar R_{\bar \p}$.  Since $\bar R_{\bar \p}$ is regular, hence normal, we deduce that $((\bar R/j(\bar R))')_{\bar \p} = 
(\bar R_{\bar \p}/j(\bar R)_{\bar \p})' = \bar R_{\bar \p}$.

It follows that the $\bar A$-homomorphisms $\ddual{\bar R}{\bar A}\to\ddual{(\bar R/j(\bar R))}{\bar A} \to \ddual{((\bar R/j(\bar R))')}{\bar A}$ are isomorphisms in codimension one.  Note that $\bar R/j(\bar R)$ is complete, local, and reduced, since it was shown in Lemma \ref{kernelbidualmap} that $j(\bar R) = \nl(\bar R)$.  Therefore, $(\bar R/j(\bar R))'$ is finite over $\bar R/j(\bar R)$, (see, e.g., \cite[p.~263]{M}), and hence finitely generated over $\bar A$.  It follows that the maps $\ddual{\bar R}{\bar A}\to\ddual{(\bar R/j(\bar R))}{\bar A} \to \ddual{((\bar R/j(\bar R))')}{\bar A}$ are homomorphisms of reflexive $\bar A$-modules and  isomorphisms in codimension one.  Since reflexive implies torsion free (see e.g., \cite[Corollary 3.7]{EG}), these maps are isomorphisms, as per Fact~\ref{AG}.

Lemma~\ref{lem130109a} says that $(\bar R/j(\bar R))'$ satisfies the $(S_2)$ condition as an $\bar A$-module.  Hence it is reflexive as an $\bar A$-module, by \cite[Theorem 3.6]{EG}.  Thus, we have $(\bar R/j(\bar R))' \cong \ddual{((\bar R/j(\bar R))')}{\bar A} \cong \ddual{(\bar R/j(\bar R))}{\bar A}$ as $\bar A$-modules. 
\end{proof}

\begin{remark} The upshot of the previous lemma is that $(\bar R/j(\bar R))'$ is the $S_2$-ification of $\bar R$.  (See Definition~\ref{S2def}.)  Therefore, $(\bar R/j(\bar R))'$ is local, by assumption.
\end{remark}

\begin{lemma} \label{barAtodualbarRetalecodimone} The map $\bar A \to (\bar R/j(\bar R))'$ is \'etale in codimension one. 
\end{lemma}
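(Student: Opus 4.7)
The plan is to combine two already-established ingredients: the codimension-one identification $((\bar R/j(\bar R))')_{\bar\p}\cong\bar R_{\bar\p}$ that appears inside the proof of Lemma~\ref{primedualiso}, and the \'etale-in-codimension-one property of $\bar A\to\bar R$ from Lemma~\ref{barAtobarRetalecodimone}. The underlying idea is that since $T:=(\bar R/j(\bar R))'$ agrees with $\bar R$ after localizing at any height $\leq 1$ prime of $\bar A$, the \'etale-in-codimension-one property for $\bar A\to\bar R$ should transfer essentially automatically to $\bar A\to T$, modulo tracking of primes.

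To carry this out, fix a prime $\tilde P\subset T$ with $\Ht\tilde P\leq 1$ and set $\bar\p=\tilde P\cap\bar A$. By Lemma~\ref{lem130109a}, $\Ht\bar\p=\Ht\tilde P\leq 1$. As observed in the proof of Lemma~\ref{primedualiso}, for such $\bar\p$ we have $j(\bar R)_{\bar\p}=0$ and $T_{\bar\p}\cong\bar R_{\bar\p}$ as $\bar A_{\bar\p}$-algebras. The prime $\tilde PT_{\bar\p}$ then corresponds under this isomorphism to $\bar P\bar R_{\bar\p}$ for a unique prime $\bar P$ of $\bar R$; since the isomorphism is one of $\bar A_{\bar\p}$-algebras, contractions to $\bar A$ are preserved, so $\bar P\cap\bar A=\bar\p$. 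By Lemma~\ref{barRequidim} applied to this $\bar P$, we obtain $\Ht\bar P=\Ht\bar\p\leq 1$.

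Localizing once more at $\bar P\bar R_{\bar\p}$ identifies $T_{\tilde P}\cong(\bar R_{\bar\p})_{\bar P\bar R_{\bar\p}}=\bar R_{\bar P}$ as $\bar A_{\bar\p}$-algebras. Lemma~\ref{barAtobarRetalecodimone} directly supplies that $\bar A_{\bar\p}\to\bar R_{\bar P}$ is \'etale, so $\bar A_{\bar\p}\to T_{\tilde P}$ is \'etale, which is exactly what \'etale in codimension one requires at the prime $\tilde P$. The only minor obstacle is the bookkeeping needed to identify $\bar P$ and verify that $\bar P\cap\bar A=\bar\p$ (rather than a proper subprime), but this is routine tracking of prime ideals through the ring isomorphism and the subsequent localization and presents no essential difficulty.
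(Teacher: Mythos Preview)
Your proof is correct and follows essentially the same approach as the paper: both use Lemma~\ref{lem130109a} to bound $\Ht\bar\p$, the codimension-one identification $T_{\bar\p}\cong\bar R_{\bar\p}$ from the proof of Lemma~\ref{primedualiso}, and the \'etale property from Lemma~\ref{barAtobarRetalecodimone}. The only difference is cosmetic: the paper first shows $\bar A_{\bar\p}\to T_{\bar\p}$ is \'etale and then simply localizes at $\tilde P$, whereas you track the corresponding prime $\bar P\subset\bar R$ explicitly (invoking Lemma~\ref{barRequidim}) and appeal to $\bar A_{\bar\p}\to\bar R_{\bar P}$ being \'etale---a harmless but slightly longer detour.
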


\begin{proof}  Recall the composition $\bar A \to \bar R \to \bar R/j(\bar R) \to (\bar R/j(\bar R))'$.  Let $\bar \p$ be a prime ideal of $\bar A$ with height less than or equal to one.  By Lemma \ref{barAtobarRetalecodimone}, $\bar A_{\bar \p} \to \bar R_{\bar \p}$ is \'etale.  
Next, $((\bar R/j(\bar R))')_{\bar \p} = (\bar R_{\bar \p}/j(\bar R_{\bar \p}))' = \bar R_{\bar \p} \cong (\bar R/j(\bar R))_{\bar \p}$ since $j(\bar R)_{\bar \p}=j(\bar R_{\bar \p}) = 0$; see the proof of Lemma \ref{primedualiso}.  Hence the composition $\bar A_{\bar \p} \to ((\bar R/j(\bar R))')_{\bar \p}$ is \'etale.

Now, let $\tilde P$ be a prime ideal of $(\bar R/j(\bar R))'$ with height less than or equal to one, and set $\bar\fp=\tilde P\cap\bar A$.  Lemma~\ref{lem130109a}
implies that $\Ht \bar\fp \leq 1$, so the map $\bar A_{\bar \p} \to ((\bar R/j(\bar R))')_{\tilde P}$ is \'etale, by the previous paragraph.
\end{proof}

\begin{lemma}  \label{normalsmap}
The ring $(\bar R/j(\bar R))'$ is a normal domain.
\end{lemma}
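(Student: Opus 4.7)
The plan is to exploit the fact, recorded in the remark following Lemma~\ref{primedualiso}, that $(\bar R/j(\bar R))'$ coincides with the $S_2$-ification of $\bar R$. Because Lemma~\ref{barRequidim} allows Fact~\ref{HH} to be applied with $T=\bar R$, the standing hypothesis of Theorem~\ref{thmone} guarantees that this $S_2$-ification is local. Since the integral closure of a reduced complete local ring in its total quotient ring is a finite direct product of normal domains, this localness will force the product to have only a single factor, yielding a normal domain.

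In more detail: first I would invoke Lemma~\ref{kernelbidualmap} to obtain $j(\bar R)=\nl(\bar R)$, so that $\bar R/j(\bar R)$ is reduced. Its minimal primes $\bar\p_1,\ldots,\bar\p_s$ are in bijection with those of $\bar R$, and its total ring of quotients decomposes as the product of fields $\prod_{i=1}^s K_i$, where $K_i$ is the fraction field of $(\bar R/j(\bar R))/\bar\p_i$.

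Next, $\bar R/j(\bar R)$ inherits from $\bar R$ (see Lemma~\ref{Rlocal}) the property of being complete and local, and therefore is a Nagata ring. Consequently, its integral closure $(\bar R/j(\bar R))'$ in $\prod_i K_i$ is module-finite and, by the classical decomposition of the normalization of a reduced ring, splits as
$$(\bar R/j(\bar R))' \cong \prod_{i=1}^s \bigl((\bar R/j(\bar R))/\bar\p_i\bigr)',$$
a finite direct product of normal domains. Since the $S_2$-ification of $\bar R$ is local by hypothesis, and a finite direct product of nonzero rings is local only when $s=1$, we conclude that $(\bar R/j(\bar R))'$ is a normal domain.

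I expect no substantial obstacle: the identification of $(\bar R/j(\bar R))'$ with the $S_2$-ification of $\bar R$, together with the fact that this $S_2$-ification is local, has already been established in the preceding material, and the decomposition of the integral closure of a reduced Nagata ring in its total quotient ring is classical. The only thing that needs care is the bookkeeping with minimal primes and the verification that $\bar R/j(\bar R)$ is indeed Nagata, both of which are routine given the completeness of $\bar R$.
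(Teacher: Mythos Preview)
Your argument is correct and reaches the same endpoint as the paper---both conclude that $(\bar R/j(\bar R))'$ is a finite product of normal domains, and then invoke the standing hypothesis that the $S_2$-ification of $\bar R$ is local to force the product to be a single factor. The difference lies in how that product structure is obtained.

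The paper verifies Serre's criterion directly for $(\bar R/j(\bar R))'$: it uses Lemma~\ref{lem130109a} to get $(S_2)$ as a ring, and then Lemma~\ref{barAtodualbarRetalecodimone} (\'etale in codimension one over $\bar A$) together with regularity of $\bar A_{\bar\p}$ to deduce $(R_1)$. Serre's criterion then yields a finite product of normal domains. Your route is more structural: since $j(\bar R)=\nl(\bar R)$, the ring $\bar R/j(\bar R)$ is reduced, and the classical decomposition of the integral closure of a reduced Noetherian ring in its total quotient ring immediately writes $(\bar R/j(\bar R))'$ as $\prod_i\bigl((\bar R/j(\bar R))/\bar\p_i\bigr)'$, a product of normal domains. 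This bypasses the explicit appeal to Lemma~\ref{barAtodualbarRetalecodimone} and the $(S_2)$ portion of Lemma~\ref{lem130109a} at this step. On the other hand, the paper's approach makes the regularity in codimension one of $(\bar R/j(\bar R))'$ visible, which fits the overall narrative leading into Lemma~\ref{barAtobarRetale}. Either way, the localness of the $S_2$-ification (already recorded in the remark after Lemma~\ref{primedualiso}) finishes the argument.
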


\begin{proof}  
Lemma~\ref{lem130109a} implies that $(\bar R/j(\bar R))'$ is $(S_2)$ as a ring. We claim that it is also $(R_1)$ as a ring.
To this end, let $\tilde P$ be a height one prime of $(\bar R/j(\bar R))'$, and set $\bar\fp=\tilde P\cap\bar A$. 
Then we have $\Ht \ol\fp=1$, again by Lemma~\ref{lem130109a}, so $\bar A_{\bar\fp}$ is regular.
From
Lemma~\ref{barAtodualbarRetalecodimone}, we know that $((\bar R/j(\bar R))')_{\tilde P}$ is \'etale over $\bar A_{\bar\fp}$.
Since $\bar A_{\bar\fp}$ is regular, this implies that $((\bar R/j(\bar R))')_{\tilde P}$ is regular.

Using Serre's criterion for normality, it follows that $(\bar R/j(\bar R))'$ is a finite direct product of normal domains; see~\cite[p.~64]{M}.
However, since $(\bar R/j(\bar R))'$ is local, by assumption, it can not decompose into a non-trivial product of domains.  Hence it is a normal domain.
\end{proof}

\begin{lemma}  \label{barAtobarRetale}
The extension $\bar A \hookrightarrow (\bar R/j(\bar R))'$ is \'etale. 
\end{lemma}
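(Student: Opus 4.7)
The plan is to invoke the purity of branch locus statement recorded in Fact \ref{purity130108a}, since at this point nearly every hypothesis has been verified in an earlier lemma. I first collect the structural data on $(\bar R/j(\bar R))'$: by Lemma \ref{primedualiso} there is an $\bar A$-module isomorphism $(\bar R/j(\bar R))' \cong \ddual{\bar R}{\bar A}$, and by Lemma \ref{barRfree} the right-hand side is free of rank $e$ over $\bar A$. In particular, $(\bar R/j(\bar R))'$ is a module finite, free $\bar A$-algebra, so flatness is automatic.

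Next I verify the remaining hypotheses of Fact \ref{purity130108a}. The ring $\bar A$ is local by construction (since $A$ is local and $I$ is a prime ideal), and it is a normal domain by the standing hypotheses of \ref{assumptions}. The ring $(\bar R/j(\bar R))'$ is a normal domain by Lemma \ref{normalsmap}, and it is local by the remark following Lemma \ref{primedualiso}, which identifies it with the $S_2$-ification of $\bar R$ that is local by the assumption on $\bar R/IR$ via Fact \ref{HH}. The injectivity of $\bar A \hookrightarrow (\bar R/j(\bar R))'$ follows from the fact that $\bar A \hookrightarrow \bar R$ splits as an $\bar A$-module map (identifying $\bar A$ with the degree zero summand, as noted in the proof of Lemma \ref{barRequidim}) together with the containment of $\bar A$ in $\bar R/j(\bar R)$ modulo the nilradical $j(\bar R) = \nl(\bar R)$ from Lemma \ref{kernelbidualmap}.

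Finally, Lemma \ref{barAtodualbarRetalecodimone} asserts that $\bar A \to (\bar R/j(\bar R))'$ is étale in codimension one; in particular, it is unramified in codimension one. Applying Fact \ref{purity130108a} with the rôle of $R$ played by $\bar A$ and the rôle of $S$ played by $(\bar R/j(\bar R))'$ now yields that $(\bar R/j(\bar R))'$ is unramified over $\bar A$. Combining this with the flatness coming from freeness, we conclude that $\bar A \hookrightarrow (\bar R/j(\bar R))'$ is étale.

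There is essentially no hard step here: the work has been done in the preceding sequence of lemmas, and the only thing to watch is to line up the hypotheses of Fact \ref{purity130108a} cleanly, in particular the verification that $(\bar R/j(\bar R))'$ is local (which uses the hypothesis of Theorem~\ref{thmone} about Fact~\ref{HH}) and that it is actually free, not merely reflexive, as an $\bar A$-module.
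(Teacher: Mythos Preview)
Your proof is correct and follows essentially the same approach as the paper: invoke Lemma~\ref{normalsmap} for normality, Lemmas~\ref{barRfree} and~\ref{primedualiso} for freeness, Lemma~\ref{barAtodualbarRetalecodimone} for \'etale in codimension one, and then apply Fact~\ref{purity130108a}. You are somewhat more explicit than the paper about the locality of $(\bar R/j(\bar R))'$ and the injectivity of the map, but these are details the paper's proof takes for granted rather than a different line of argument.
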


\begin{proof}
Lemma~\ref{normalsmap} implies that $(\bar R/j(\bar R))'$ is a normal domain.
The extension $\bar A \to (\bar R/j(\bar R))'$ between normal domains is module finite, free, and \'etale in codimension one by Lemmas~\ref{barRfree}, \ref{primedualiso},
and~\ref{barAtodualbarRetalecodimone}. Thus, it is \'etale by purity of branch locus, specifically, Fact~\ref{purity130108a} applies since $(\bar R/j(\bar R))'$ is a free $\bar A$ module, and hence projective over $\bar A$.
\end{proof}

{\it Conclusion of proof of Theorem \ref{thmone}.}
Let $S$ denote $(\bar R/j(\bar R))'$.  Since the extension $\bar A \hookrightarrow S$ is \'etale and local, the induced field extension $\bar A/\bar \fm \to S/\bar \fm S$ is separable and algebraic.  Since $\bar A/\bar \fm$ is separably closed, this induced map is an isomorphism.  Recall that $S$ is free over $\bar A$ of rank $e$.  Consequently, there are isomorphisms $$\bar A/\bar \fm \cong S/\bar \fm S \cong (\bar A/\bar \fm)^e.$$
It follows that $e=1$, but this contradicts the assumptions in paragraph \ref{assumptions}. \qed

\section{Proof of Theorem \ref{thmtwo}}
\label{sec130110a}

\begin{chunk} \label{newassumptions}
Let $(A, \frak m)$ be an excellent, local, normal domain and $I$ a prime ideal of $A$ with finite projective dimension such that:
\begin{enumerate}
\item[(i)] $\bar A = A/I$ is normal; 
\item[(ii)] $I$ is a complete intersection on the punctured spectrum of $A$; and
\item[(iii)] $\mu(I) \leq \dim A-2$.
\end{enumerate}
Let $e >1$ be an integer which represents a unit in $A$.
Let $\frak a$ be a non-zero reflexive ideal of $A$ such that $[\ddual{(\frak a \otimes_A \bar A)}{\bar A}]$ is trivial in $\Cl(\bar A)$, and suppose that the order of $[\frak a]$ in $\Cl(A)$ is $e$.  Thus, $\frak a^{(e)} = aA$ for some $a \in \frak a$.  Set $K$ to be the fraction field of $A$.  
\end{chunk}

The theorem is trivial if $I= (0)$, therefore assume $I$ is non-zero.  If $\dim A \leq 2$, then the $(R_1)$ condition on $\bar A$ forces the factor ring to be regular, and hence $A$ must itself be regular by~\cite[Theorem 6.1(1)]{AFHa}.  Consequently, $\Cl(A) = 0$, which again provides a trivial result.  Therefore, 
assume that $\dim A \geq 3$.  We make a series of reductions to reduce to the case of Theorem \ref{thmone}, and as in the previous proof, ultimately obtain our conclusion by way of contradiction.

The prime ideal $I$ has finite projective dimension, therefore
$$\pd_{A_I}(A_I/IA_I)\leq\pd_A \bar A<\infty.$$
It follows that the localization $A_I$
is regular and hence a unique factorization domain.  This implies
that $\fa A_I=bA_I$ for some $b$ in $\fa$, and hence
$$b^eA_I=(bA_I)^{(e)}=(\fa A_I)^{(e)}=\fa^{(e)}A_I=aA_I.$$

\begin{lemma}  \label{Acomplete}
$A$ can be assumed to be complete.
\end{lemma}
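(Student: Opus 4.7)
The plan is to pass to the $\fm$-adic completion and verify that every piece of data in paragraph~\ref{newassumptions}, including the supposed obstruction $[\fa]$, transfers faithfully, so that proving Theorem~\ref{thmtwo} in the complete case suffices.

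Write $\hat A$ for the $\fm$-adic completion of $A$, set $\hat I=I\hat A$, and let $\hat\fa=\ddual{(\fa\otimes_A\hat A)}{\hat A}$. Because $A$ is excellent and normal, $\hat A$ is a complete, local, normal domain, and because $\bar A$ is excellent and normal, $\widehat{\bar A}\cong\hat A/\hat I$ is a normal domain, so $\hat I$ is a prime ideal of $\hat A$. Flatness of $A\to\hat A$ gives $\pd_{\hat A}\widehat{\bar A}\leq\pd_A\bar A<\infty$, and faithful flatness together with Nakayama's lemma yields $\mu(\hat I)=\mu(I)\leq\dim A-2=\dim\hat A-2$. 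For the complete intersection hypothesis, observe that any non-maximal prime $P$ of $\hat A$ contracts to a non-maximal prime $\fp=P\cap A$, since $\fm\hat A$ is the unique prime of $\hat A$ lying over $\fm$. By hypothesis (ii), $I_\fp$ is generated by a regular sequence in $A_\fp$; its image generates $(\hat I)_P$ in $\hat A_P$ and remains regular by flatness of $A_\fp\to\hat A_P$.

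For the divisorial data, the assignment $[\fa]\mapsto[\hat\fa]$ defines a homomorphism $\Cl(A)\to\Cl(\hat A)$ by \cite[Theorem A]{SSW}, and naturality produces a commutative square relating it to the analogous map $\Cl(\bar A)\to\Cl(\widehat{\bar A})$. Consequently $[\hat\fa]$ lies in $\ker(\Cl(\hat A)\to\Cl(\widehat{\bar A}))$. Since $\fa^{(e)}=aA$ passes to $\hat\fa^{(e)}\cong a\hat A\cong\hat A$, the order $e'$ of $[\hat\fa]$ divides $e$ and is therefore still a unit in $\hat A$.

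The main obstacle is verifying $e'>1$, that is, that $[\hat\fa]$ does not become trivial in the completion. This is the classical injectivity of the completion map on divisor class groups for analytically normal local domains (see for instance Fossum~\cite{F}); equivalently, a rank-one reflexive module whose reflexive hull over $\hat A$ is free must itself be free, by faithfully flat descent. Granting this, $e'=e$, and $[\hat\fa]$ is a genuine order-$e$ element of $\ker(\Cl(\hat A)\to\Cl(\widehat{\bar A}))$. Thus, if the conclusion of Theorem~\ref{thmtwo} were known in the complete case, applying it to $(\hat A,\hat I,\hat\fa)$ would contradict the existence of $[\fa]$. Hence it suffices to prove Theorem~\ref{thmtwo} under the additional assumption that $A$ is complete.
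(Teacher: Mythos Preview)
Your argument is correct and follows essentially the same route as the paper's proof: transfer all hypotheses to $\hat A$ via flatness/locality, set up the commutative square of class-group maps, and use injectivity of $\Cl(A)\to\Cl(\hat A)$ to keep the order of the image equal to $e$. The paper records this injectivity by drawing the vertical arrows as $\hookrightarrow$ and then invoking ``properties of group monomorphisms''; you are more explicit in flagging this point and citing a source, which is a virtue rather than a deviation.
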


\begin{proof}  
Because $A$ is an excellent local normal domain, the completion $\hat{A}$ is a complete local normal domain by \cite[Corollary 37.6]{N}. 
Next, $\pd_{\hat{A}} I \hat{A} < \infty$ since that map $A \to \hat{A}$ is flat.  Moreover, because the map is also local, $\mu_{\hat{A}}(I \hat{A}) = \mu_A(I) \leq \dim A -2 = \dim \hat{A} - 2$. We need to show that $I \hat{A}$ is a complete intersection on the punctured spectrum of $\hat{A}$, denoted $\Spec^{\circ}(\hat{A})$.

Let $P \in \Spec^{\circ}(\hat{A})$ and set $\frak p = P \cap A$.  Since the closed fibre $\hat{A}/\frak m \hat{A}$ is isomorphic to $A/\frak m$, the fact that $P$ is not maximal implies that $\frak p \neq \frak m$.  Thus, the ideal $I A_{\frak p}$ is either $A_{\fp}$ or generated by an $A_{\fp}$-regular sequence.  If $I A_{\fp}=A_{\fp}$, then $I \hat{A}_P = I A_{\fp} \cdot \hat{A}_P = \hat{A}_P$.  If $I A_{\frak p}$ is generated by an $A_{\fp}$-regular sequence, then $I \hat{A}_P = I A_{\fp} \cdot \hat{A}_P$ is generated over $\hat{A}_P$ by the same sequence, which is $\hat{A}_P$-regular since the induced map $A_{\frak p} \to \hat{A}_P$ is flat and local.

The left-most diagram below is commutative and each map has finite flat dimension:
\begin{equation} 
\begin{split}
\label{cd1}
\xymatrix{
A \ar[r] \ar[d]_{\rho} & \bar A \ar[d] & & \Cl(A) \ar[r] \ar@{^{(}->}[d]_{\Cl(\rho)} & \Cl(\bar A) \ar@{^{(}->}[d]  \\
\hat{A} \ar[r] & \abarhat & & \Cl(\hat{A}) \ar[r] & \Cl(\widehat{\bar A}). }
\end{split}
\end{equation}

It follows that the composition $A \to \abarhat$ also has finite flat dimension, so \cite[Theorem 1.14]{SWS} implies that the second diagram above also commutes.

Suppose the result holds on $\Cl(\hat{A}) \to \Cl(\widehat{\bar A})$.  Since $[\frak a] \in \ker(\Cl(A) \to \Cl(\bar A))$, it follows that 
$\Cl(\rho)([\frak a]) \in \ker(\Cl(\hat{A}) \to \Cl(\widehat{\bar A}))$.  By properties of group monomorphisms, the order of $\Cl(\rho)([\frak a])$ is $e$,
which is a unit in $\Cl(\hat{A})$.  The condition $e>1$ contradicts the complete case.  
\end{proof}

\begin{lemma}  \label{rootsofunity}
$A$ can be assumed to contain a primitive $e$-th root of unity.
\end{lemma}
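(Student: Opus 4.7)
The strategy is to adjoin a primitive $e$-th root of unity by passing to a finite \'etale local extension $A \hookrightarrow A'$, transferring all hypotheses of Theorem~\ref{thmtwo} from $(A,I)$ to $(A', IA')$, and arguing that the existence of an order-$e$ element of $\ker(\Cl(A) \to \Cl(\bar A))$ yields one in $\ker(\Cl(A') \to \Cl(A'/IA'))$. To build $A'$, set $k = A/\fm$ and note that $k_0 := k(\zeta_e)$ is a finite separable extension of $k$ since $e$ is a unit and hence $X^e - 1$ is separable. Writing $k_0 = k[\bar T]/(\bar f)$ for a monic irreducible separable $\bar f$ and lifting to a monic $f \in A[T]$, Hensel's lemma (applicable because $A$ is complete) identifies $A' := A[T]/(f)$ as a finite \'etale local $A$-algebra with residue field $k_0$. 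A second Hensel argument applied to $\Phi_e \in A'[T]$---whose reduction modulo the maximal ideal of $A'$ has the primitive $e$-th roots of unity as simple roots---lifts a primitive $e$-th root of unity into $A'$. Then $A'$ is a complete local normal domain: \'etale over the normal domain $A$ gives normality, finiteness over the complete $A$ gives completeness, and locality forces the total ring of fractions $A' \otimes_A K$ to be a field, so $A'$ is a domain.

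Next I would verify that the hypotheses (i)--(iii) transfer. The quotient $\bar{A'} := A' \otimes_A \bar A$ is \'etale over $\bar A$, hence a complete local normal domain by the same reasoning. Flatness of $A \to A'$ preserves finite projective dimension, so $\pd_{A'}(A'/IA') < \infty$. Since $A \hookrightarrow A'$ is local and finite free, $\mu(IA') = \mu(I)$ and $\dim A' = \dim A$, giving $\mu(IA') \leq \dim A' - 2$. For (ii), if $P \subset A'$ is non-maximal, then $\fp := P \cap A \neq \fm$, so any regular sequence in $A_\fp$ generating $I_\fp$ remains regular in the flat extension $A'_P$ and generates $(IA')_P$. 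The integer $e$ is visibly still a unit in $A'$.

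The main obstacle is preserving the order of $[\fa]$ under the induced map $\iota \colon \Cl(A) \to \Cl(A')$. The norm map provides $\operatorname{Nm} \circ \iota = n$, where $n := [A':A] = [k_0:k]$ divides $\phi(e)$, so $\ker \iota$ is killed by $n$. Unfortunately $\gcd(\phi(e), e)$ need not equal $1$ (for example when $e$ is divisible by a square), so the order could \emph{a priori} drop. I would resolve this by first reducing to the case $e = p$ prime: any class of order $e$ in $\ker(\Cl(A) \to \Cl(\bar A))$ has its $(e/p)$-th power of order $p$ in the same kernel for each prime $p \mid e$, so it suffices to prove Theorem~\ref{thmtwo} for prime $e$. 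With $e = p$ prime, $n \mid p-1$ is coprime to $p$, so $\ker \iota$ contains no $p$-torsion and $\iota([\fa])$ still has order $p$. Commutativity of the natural square places $\iota([\fa])$ in $\ker(\Cl(A') \to \Cl(\bar{A'}))$, so the hypothetical failure of the theorem for $(A,I,p)$ transfers to $(A',IA',p)$ with $\zeta_p \in A'$, completing the reduction.
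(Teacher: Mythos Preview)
Your argument is correct, but it diverges from the paper's in two places worth noting.

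\emph{Construction of the extension.} The paper adjoins a primitive $e$-th root $\zeta$ lying in an algebraic closure of the fraction field $K$, forming $A[\zeta]\cong A[T]/\Phi_e(T)$ (with $\Phi_e$ the minimal polynomial of $\zeta$ over $K$); since this ring need not be local, it then localizes at a maximal ideal $Q$ and completes. Your construction via Hensel---lifting the minimal polynomial of $\zeta_e$ over the residue field---produces a local \'etale extension $A'$ in one step, which is tidier.

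\emph{Order preservation.} Here the paper takes the shorter path: the map $A\to\widehat{A[\zeta]_Q}$ is flat and local, hence faithfully flat, and for a faithfully flat extension of normal (Krull) domains the induced map on divisor class groups is injective. This is what the hooked arrows in the paper's diagrams encode, and it preserves the order of $[\fa]$ for arbitrary $e$ with no further work. Your route---reducing first to $e=p$ prime and then invoking the norm to kill $p$-torsion in $\ker\iota$---is valid, but the reduction and the norm are both unnecessary: your own $A\to A'$ is already faithfully flat (finite free and local), so $\iota$ is injective outright. You may wish to replace the norm paragraph with this one-line observation; it streamlines the argument and avoids having to thread the ``$e$ is prime'' assumption through the remaining reductions.
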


\begin{proof}  Suppose that $A$ does not contain a primitive $e$-th root of unity and let $\zeta$ be a primitive $e$-th root of unity in the algebraic closure of $K$, the fraction field of $A$.  
Note that $\zeta$ exists because $e$ is a unit in $A$.
By assumption, $A$ is a complete local normal domain.

As per \cite[Proposition VI.1]{Ra}, the extension $A \to A[T]/(T^e -1)$ is \'etale.  Let $\Phi_e(T) \in K[T]$ be the minimal polynomial of $\zeta$ over $K$.  Since $A$ is integrally closed
and the coefficients of $\Phi_e(T)$ are integral over $A$, it follows that $\Phi_e(T) \in A[T]$.  Using the Division Algorithm in $A[T]$, we have $A[\zeta] \cong A[T]/\Phi_e(T)$, and $\Phi_e(T)$ divides $(T^e - 1)$ in $A[T]$.  Set d = deg $\Phi_e(T)$.  Note that  $\displaystyle{A[T]/\Phi_e(T) \cong \oplus_{i=0}^{d-1} A}$ as $A$-modules, hence the extension $A \to A[T]/\Phi_e(T)$ is faithfully flat.  The maps $A \to A[T]/(T^e -1)$ and $A[T]/(T^e -1) \to A[T]/\Phi_e(T)$ are both unramified, thus the composition  is unramified; i.e., the extension $A \to A[T]/\Phi_e(T)$ is \'etale.

The ring $A[\zeta]$ is excellent since excellence is preserved under finitely-generated extensions.  Moreover, $A[\zeta]$ is a domain since it is a subring of $K[\zeta]$.
Since $A$ is a normal domain and the finite extension $A \to A[\zeta]$ is \'etale, $(A[\zeta])_Q$ is a normal domain for each prime $Q$ of $A[\zeta]$ by \cite[Theorem 23.9]{M}.

\newcommand{\squab}{\widehat{A[\zeta]_Q}}
\newcommand{\quab}{A[\zeta]_Q}
\newcommand{\uab}{A[\zeta]}

Let $Q$ be a maximal ideal of $A[\zeta]$.  Since $A \to A[\zeta]$ is finite, hence $Q \cap A$ is maximal; that is, $Q \cap A = \frak m$.  
We next note that the ring $\squab$ and the ideal $I \squab$ satisfy the hypothesis of the theorem.  Indeed, since $\quab$ is an excellent local normal domain, it follows that $\squab$ is a complete normal local domain.

The maps in the sequence $A \to \uab \to \quab \to \squab$ are all flat, hence the composition $A \to \squab$ is flat.  It follows that the extension $I \squab$ has finite projective dimension over $\squab$.  It remains to show that $\squab/I \squab$ is a normal domain.  To this end, note that $I \quab$ has finite projective dimension over $\quab$.  The map $A \to \quab$ is flat and unramified because $A \to \uab$ is flat and unramified.  Therefore, $A/I \to \quab/I \quab$ is flat and unramified.  The fact that $A/I$ is normal implies that $\quab/I \quab$ is normal; since $\quab/I \quab$ is local, it is also an excellent local normal domain.  Hence its completion $\squab/I \squab$ a local normal domain.

As in Lemma \ref{Acomplete}, we have $\mu_{\squab}(I \squab) \leq \dim \squab -2$ and the fact that the map $A \to \squab$ is flat and local, with closed fibre a field, implies that $I \squab$ is a complete intersection on $\Spec^\circ\left(\squab \right)$.
Likewise, as in Lemma \ref{Acomplete}, we have a commutative diagram
\begin{equation} \label{cd2}
\begin{split}
\xymatrix{
\Cl(A) \ar[r] \ar@{^{(}->}[d] & \Cl(\bar A) \ar@{^{(}->}[d]  \\
\Cl \left(\squab \right) \ar[r] & \Cl \left(\squab/I \squab \right) }
\end{split}
\end{equation}
and the same argument shows that we may replace $A$ with $\squab$ to assume that $A$ contains a primitive $e$-th root of unity.
\end{proof}

\begin{lemma}  \label{ksepclosed}
$A/\m$ can be assumed to be separably closed.
\end{lemma}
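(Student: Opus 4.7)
The plan is to mimic Lemmas~\ref{Acomplete} and~\ref{rootsofunity} and pass to a faithfully flat local extension $A\to A^*$ that enlarges the residue field to its separable closure while preserving every hypothesis of the theorem and the order of $[\fa]$. Concretely, I would let $A^{\mathrm{sh}}$ denote a strict Henselization of $(A,\m)$, realized as a filtered colimit of pointed essentially \'etale local $A$-algebras, and set $A^*=\widehat{A^{\mathrm{sh}}}$. Because $A$ is complete (hence excellent) and excellence is preserved by strict Henselization, $A^{\mathrm{sh}}$ is an excellent normal local domain, hence analytically normal; thus $A^*$ is a complete local normal domain with maximal ideal $\m^*=\m A^*$ and residue field $(A/\m)^{\mathrm{sep}}$, and it still contains a primitive $e$-th root of unity because $A$ does.

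Next I would verify that the hypotheses on $I$ transfer to $IA^*$. Flat base change yields $\pd_{A^*}(IA^*)=\pd_A(I)<\infty$. The quotient $A^*/IA^*$ is obtained from $\bar A$ by the same strict-Henselization-then-completion recipe, so the argument of the previous paragraph applied to $\bar A$ shows it is a complete local normal domain; in particular $IA^*$ is a prime ideal of $A^*$. Faithful flatness gives $\mu(IA^*)=\mu(I)\leq\dim A-2=\dim A^*-2$. For any $P\in\Spec^{\circ}(A^*)$, the contraction $\p:=P\cap A$ is distinct from $\m$ (else $P\supseteq\m A^*=\m^*$ would force $P=\m^*$), so $IA_{\p}$ is either the unit ideal or generated by an $A_{\p}$-regular sequence, and this property passes to $IA^*_P$ via the flat local map $A_{\p}\to A^*_P$.

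To transfer the torsion element, I would set up the commutative square with rows $\Cl(A)\to\Cl(\bar A)$ and $\Cl(A^*)\to\Cl(A^*/IA^*)$, whose vertical maps are induced by $A\to A^*$ and $\bar A \to A^*/IA^*$. As in Lemma~\ref{Acomplete}, these vertical maps are injective: each essentially \'etale stage induces an injection on divisor class groups via~\cite[Theorem 1.14]{SWS} (being flat local between normal domains with regular fibers), filtered colimits preserve injectivity, and the final $\m^{\mathrm{sh}}$-adic completion also induces an injection by the same reference, exactly as in the argument for Lemma~\ref{Acomplete}. Thus the image of $[\fa]$ in $\Cl(A^*)$ is an element of order $e$ lying in $\ker(\Cl(A^*)\to\Cl(A^*/IA^*))$, and we may replace $A$ by $A^*$.

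The main technical obstacle is precisely controlling $\Cl$-injectivity across the filtered colimit forming $A^{\mathrm{sh}}$ and across the subsequent completion simultaneously, and then pasting these injections together with their quotient counterparts; one must carefully invoke~\cite[Theorem 1.14]{SWS} or an analogous result at each stage so that the $\Cl$ maps on the numerator and denominator sides of $\bar A = A/I$ compose compatibly and remain injective. Once the reduction is complete, Theorem~\ref{thmone} applies to $(A^*, IA^*, [\fa A^*])$ and produces the desired contradiction with $e>1$.
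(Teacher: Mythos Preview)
Your proposal is correct and arrives at the same reduction, but the construction of the extension differs from the paper's. The paper does not pass through the strict Henselization; instead it invokes Grothendieck's existence result \cite[Proposition (0.10.3.1)]{Gr3-1} to obtain in one stroke a flat local homomorphism $\rho\colon(A,\m)\to(B,\n)$ with $\n=\m B$, $B$ complete, and $B/\n\cong\ksep$, and then shows that $\rho$ is a \emph{regular} morphism by the chain ``$\sf k\to\ksep$ is $0$-smooth $\Rightarrow$ $B$ is $\m B$-smooth over $A$ $\Rightarrow$ (Andr\'e) $\rho$ is regular.'' Normality of $B$ and of $B/IB$ then follow directly from \cite[Theorem 32.2]{M}, and the transfer of the remaining hypotheses and the $\Cl$-diagram proceed exactly as in Lemma~\ref{Acomplete}.

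Your route via $A^*=\widehat{A^{\mathrm{sh}}}$ is a legitimate alternative: it produces (up to isomorphism) the same ring $B$, but you verify its properties stage-by-stage through the filtered system of pointed \'etale local extensions and then the completion. The trade-off is that you need the nontrivial fact that strict Henselization preserves excellence (so that the final completion is again normal), together with a limit argument for injectivity of $\Cl$ along the colimit; the paper's approach replaces these with Andr\'e's theorem on formal smoothness implying regularity. Your identification $A^{\mathrm{sh}}/IA^{\mathrm{sh}}\cong(\bar A)^{\mathrm{sh}}$ is correct (finite \'etale covers of a Henselian local ring are classified by the residue field, compatibly with passage to $A/I$), which is what makes the quotient side go through. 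Either toolkit suffices; the paper's has the virtue of a single construction and a single regularity check, while yours is more explicit but leans on preservation-of-excellence results that are external to the paper's reference list.
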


\newcommand{\ksep}{\sf k^{\text{sep}}}

\begin{proof}  
Set $\sf k=A/\m$ and let $\ksep$ be a separable closure of $\sf k$; (i.e., the set of all separable elements in a fixed algebraic closure).
This means that $\sf k\subseteq\ksep$ is a separable algebraic extension,
and $\ksep$ has no nontrivial separable algebraic extensions.
Grothendieck~\cite[Proposition (0.10.3.1)]{Gr3-1} shows that there is a flat local ring homomorphism $\rho\colon (A, \m)\to(B,\n)$ such that $\n=\m B$, the ring $B$ is complete,
and $B/\n\cong \ksep$. 

It follows that the extension $\rho$ is regular,
in the terminology of Matsumura~\cite[pp.\ 255--256]{M}. 
To see this, observe that the extension $\sf k\to\ksep$ is 0-smooth by~\cite[Theorem 26.9]{M}.
Using~\cite[Theorem 28.10]{M} it follows that $B$ is $\m B$-smooth over $A$,
that is, that $B$ is $\n$-smooth over $A$. Now, apply the theorem of M.~Andr\'e
from~\cite[Lemme II.57]{A} to conclude that the extension $\rho$ is regular.  (See also \cite[p.~260]{M}.)

From~\cite[Theorem 32.2]{M} it follows that $B$ is a normal domain. 
The same reasoning implies that the ring $B/IB$ is a normal domain.
The fact that the extension $\rho$ is flat and local 
implies that $\pd_{B}(B/IB)=\pd_A(\bar A)<\infty$.

Again as in Lemma \ref{Acomplete}, we have $\mu_{B}(I B) \leq \dim B -2$, and the fact that the map $\rho$ is flat and local, with closed fibre a field, implies that $I B$ is a complete intersection on $\Spec^\circ(B)$.

Now argue as in the proof of Lemma~\ref{Acomplete}: there is a commutative diagram of divisor class groups as in \eqref{cd1} and \eqref{cd2}, but with bottom row $\Cl(B) \to \Cl(B/IB)$.  As $[\fa]\in\ker(\Cl(A) \to \Cl(\bar A))$ has order $e$ in $\Cl(A)$, the element $\Cl(\rho)([\fa])\in\ker(\Cl(B) \to \Cl(B/IB))$ has order $e$ as well.  If the theorem holds for $B\to B/IB$, then we have a contradiction.
\end{proof}

The three lemmas below give the explicit details, implicit in~\cite[proof of Proposition 3.9(c)]{HH}, as to the fact that if the $S_2$-ification of $\bar R$ is not local, then there exists a prime ideal $\tilde{P}$ of $\bar R$ of height at least two such that the punctured spectrum of $\bar R_{\tilde{P}}$ is disconnected.  (It should be noted that all assumptions in these lemmas are as stated and do not depend upon previously set notation in (\ref{newassumptions}).)

\begin{lemma}\label{lem1}
Let $B$ be a complete local equidimensional ring. Assume that the $S_2$-ification of $B$ is not local.
Then there exist ideals $K_1,K_2\subseteq B$ such that 
\begin{enumerate}[\rm(1)]
\item \label{item1}
$\Ht(K_1+K_2)\geq 2$, 
\item \label{item2}
$K_1\cap K_2\subseteq\nl(B)$, and 
\item \label{item3}
$K_j\not\subseteq\nl(B)$ for $j=1,2$.
\end{enumerate}
\end{lemma}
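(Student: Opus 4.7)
The plan is to deduce the lemma directly from the equivalences in Fact~\ref{HH}. Since $B$ is complete, local, and equidimensional and its $S_2$-ification is not local, condition (iii) in Fact~\ref{HH} fails for $T = B$; hence condition (iv) also fails. This produces an ideal $J \subseteq B$ with $\Ht J \geq 2$ for which the open set $X := \Spec(B) \setminus V(J)$ is disconnected, so we may write $X = U_1 \sqcup U_2$ with $U_1, U_2$ nonempty and both open in $X$. Since $X$ is open in $\Spec(B)$, each $U_j$ is open in $\Spec(B)$, and so $U_j = \Spec(B) \setminus V(K_j)$ for some ideal $K_j$ of $B$ (e.g., take $K_j$ to be the intersection of the primes in the closed complement of $U_j$).

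The second step is the purely formal translation from the topological statement back to ideals. From $U_1 \cup U_2 = X$ we get
\[
\Spec(B) \setminus V(K_1 + K_2) = \Spec(B) \setminus (V(K_1) \cap V(K_2)) = U_1 \cup U_2 = \Spec(B) \setminus V(J),
\]
so $\sqrt{K_1 + K_2} = \sqrt{J}$ and consequently $\Ht(K_1 + K_2) = \Ht J \geq 2$, giving (\ref{item1}). From $U_1 \cap U_2 = \emptyset$ we get $V(K_1 \cap K_2) = V(K_1) \cup V(K_2) = \Spec(B)$, which forces $K_1 \cap K_2 \subseteq \nl(B)$, giving (\ref{item2}). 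Finally, the assumption that each $U_j$ is nonempty means $V(K_j) \neq \Spec(B)$, equivalently $K_j \not\subseteq \nl(B)$, which is (\ref{item3}).

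There is essentially no serious obstacle here; the only potential subtlety is making sure the two nonempty pieces of the disconnection are genuinely open subsets of $\Spec(B)$ (not merely of $X$) so that they are cut out by ideals of $B$, but this is automatic because $X$ is itself open in $\Spec(B)$. Everything else is a standard dictionary between unions/intersections of Zariski closed sets and sums/intersections of ideals.
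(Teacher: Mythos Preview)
Your proof is correct and follows essentially the same approach as the paper: use the failure of condition (iv) in Fact~\ref{HH} to obtain a disconnected open set $\Spec(B)\setminus V(J)$, write the two pieces as complements of closed sets $V(K_j)$, and translate the set-theoretic relations into the three ideal-theoretic conditions. The only minor difference is that you invoke ``open in an open is open'' directly to produce the $K_j$, whereas the paper writes $U_j = [\Spec(B)\setminus V(J)]\cap[\Spec(B)\setminus V(I_j)]$ and sets $K_j = JI_j$; both routes yield the same result.
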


\begin{proof}
Since the $S_2$-ification of $B$ is not local, we know from [14, Theorem 3.6] that there is an ideal $J\subseteq B$ such that
$\Ht J \geq 2$ and $\Spec(B)- V(J)$ is disconnected. Note that we are using the subspace topology on 
$\Spec(B)- V(J)$ induced from the Zariski topology on $\Spec(B)$. It follows that there are non-empty disjoint open 
subsets $U_1,U_2\subsetneq \Spec(B)- V(J)$ such that $\Spec(B)- V(J)=U_1\cup U_2$.
Since each $U_j$ is open in $\Spec(B)- V(J)$, it is of the form $U_j=[\Spec(B)- V(J)]\cap W_j$ for some open subset
$W_j\subseteq\Spec(B)$.
Thus, there are ideals $I_j\subseteq B$ such that $W_j=\Spec(B)- V(I_j)$, and it follows that
\begin{align*}
U_j
&=[\Spec(B)- V(J)]\cap[\Spec(B)- V(I_j)]\\
&=\Spec(B)- [V(J)\cup V(I_j)]\\
&=\Spec(B)- V(JI_j).
\end{align*}
For $j=1,2$, set $K_j=JI_j$.
This implies that $U_j=\Spec(B)- V(K_j)$ for $j=1,2$.

The condition $\Spec(B)- V(J)=U_1\cup U_2$  implies that
\begin{align*}
\Spec(B)- V(J)
&=[\Spec(B)- V(K_1)]\cup[\Spec(B)- V(K_2)]\\
&=\Spec(B)- [V(K_1)\cap V(K_2)]\\
&=\Spec(B)- V(K_1+K_2).
\end{align*}
So we have
\begin{align*}
\Ht(K_1+K_2)
&=\min\{\Ht Q \mid Q\in V(K_1+K_2)\}\\
&=\min\{\Ht Q \mid Q\in V(J)\}\\
&=\Ht J \\
&\geq 2.
\end{align*}
This explains condition~\eqref{item1} from the statement of the lemma.

Next, we use the fact that $U_1$ and $U_2$ are disjoint:
\begin{align*}
\emptyset
&=U_1\cap U_2\\
&=[\Spec(B)- V(K_1)]\cap[\Spec(B)- V(K_2)]\\
&=\Spec(B)- [V(K_1)\cup V(K_2)]\\
&=\Spec(B)- V(K_1\cap K_2)
\end{align*}
so that $V(K_1\cap K_2)=\Spec(B)$.
It follows that, for each $Q\in\Spec(B)$ we have $K_1\cap K_2\subseteq Q$.
It follows that $K_1\cap K_2\subseteq\nl(B)$, i.e., we have condition~\eqref{item2} from the statement of the lemma.

For condition~\eqref{item3}, we argue by contradiction. Suppose that $K_j\subseteq \nl(B)$.
This implies that $V(K_j)=\Spec(B)$, so
$U_j=\Spec(B)- V(K_j)=\emptyset$.
This contradicts our choice of $U_j$. Thus, condition~\eqref{item3} is satisfied.
\end{proof}

\begin{lemma}\label{lem2}
Let $B$ be a complete local equidimensional ring. Assume that the $S_2$-ification of $B$ is not local.
Then there exist ideals $L_1,L_2\subseteq B$ such that 
\begin{enumerate}[\rm(i)]
\item \label{item4}
$\Ht(L_1+L_2)\geq 2$, 
\item \label{item5}
$L_1\cap L_2=\nl(B)$,  
\item \label{item6}
$L_j\not\subseteq\nl(B)$ for $j=1,2$, and
\item \label{item7}
$L_j=P_{j,1}\cap\cdots\cap P_{j,t_j}$  for $j=1,2$, such that $t_j\geq 1$ and each $P_{j,k}\in\min(B)$.
\end{enumerate}
\end{lemma}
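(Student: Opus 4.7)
The plan is to build $L_1$ and $L_2$ by intersecting suitable subsets of the (finitely many) minimal primes of $B$, with those subsets determined by the containment behavior of the ideals $K_1, K_2$ produced by Lemma~\ref{lem1}. Set
\[
S_j = \{P \in \min(B) : K_j \subseteq P\} \quad (j = 1, 2), \qquad L_j = \bigcap_{P \in S_j} P.
\]
The preliminary combinatorial step is to check that $\{S_1, S_2\}$ covers $\min(B)$ and that each $S_j$ is a proper, non-empty subset of $\min(B)$. The covering holds because $K_1 K_2 \subseteq K_1 \cap K_2 \subseteq \nl(B) \subseteq P$ for every $P \in \min(B)$, so at least one of the $K_j$ must lie in $P$. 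Properness of $S_j$ is equivalent to the condition $K_j \not\subseteq \nl(B) = \bigcap_{P \in \min(B)} P$, which is item~\eqref{item3} of Lemma~\ref{lem1}; non-emptiness of $S_j$ then follows from properness of $S_{3-j}$ combined with the covering.

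From this point conditions~\eqref{item5} and~\eqref{item7} are immediate. Indeed $L_j$ is by construction a non-empty finite intersection of minimal primes, giving~\eqref{item7}, and
\[
L_1 \cap L_2 = \bigcap_{P \in S_1 \cup S_2} P = \bigcap_{P \in \min(B)} P = \nl(B),
\]
giving~\eqref{item5}. For~\eqref{item4}, the inclusion $K_j \subseteq L_j$ that is built into the construction yields $K_1 + K_2 \subseteq L_1 + L_2$, and hence $\Ht(L_1 + L_2) \geq \Ht(K_1 + K_2) \geq 2$ by item~\eqref{item1}.

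The only remaining condition, and the one that calls for a small extra argument, is~\eqref{item6}. I would proceed by contradiction: pick $Q \in \min(B) \setminus S_j$, which is non-empty by properness of $S_j$, and suppose $L_j \subseteq Q$. Since $L_j$ is a finite intersection of primes in $S_j$, prime avoidance produces some $P \in S_j$ with $P \subseteq Q$; but $P$ and $Q$ are both minimal primes, forcing $P = Q \in S_j$, a contradiction. Hence $L_j \not\subseteq Q$, and since $\nl(B) \subseteq Q$ we conclude $L_j \not\subseteq \nl(B)$. I do not anticipate any serious obstacle: once the partition-like statement $S_1 \cup S_2 = \min(B)$ with proper, non-empty parts is in hand, the entire verification is bookkeeping on the minimal primes of $B$.
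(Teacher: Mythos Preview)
Your proof is correct and follows essentially the same approach as the paper: both define $L_j$ as the intersection of those minimal primes of $B$ that contain $K_j$, and the verifications of (i)--(iv) proceed along the same lines. Your formulation is slightly more direct, as you bypass the paper's preliminary step of replacing $K_j$ by $\rad K_j$ and writing out an explicit prime decomposition before selecting the minimal primes.
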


\begin{proof}
Let $K_1,K_2\subseteq B$ be as in Lemma~\ref{lem1}.
It follows that
\begin{enumerate}[\rm(a)]
\item \label{item8}
$\Ht(\rad{K_1}+\rad{K_2})\geq\Ht(K_1+K_2)\geq 2$, 
\item \label{item9}
$\rad{K_1}\cap \rad{K_2}=\rad({K_1\cap K_2})\subseteq\rad{\nl(B)}=\nl(B)$,  
\item \label{item10}
$\rad{K_j}\not\subseteq\nl(B)$ for $j=1,2$, and
\item \label{item11}
each $\rad{K_j}$ is an intersection of  primes of $B$.
\end{enumerate}
Thus, we may replace $K_j$ with $\rad{K_j}$ to assume that each $K_j$ 
is an intersection of  primes of $B$.

Claim: each $K_j$ is contained in a minimal prime of $B$. 
We prove this by contradiction. 
By symmetry, suppose that $K_1$ is not contained in any minimal prime of $B$.
Then for each $Q\in\min(B)$, we have
$$K_1K_2\subseteq K_1\cap K_2\subseteq\nl(B)\subseteq Q.$$
Since $Q$ is prime, we have $K_j\subseteq Q$ for some $j=1,2$.
But $K_1\not\subseteq Q$ by assumption, so we must have $K_2\subseteq Q$.
Since $Q$ was chosen arbitrarily from $\min(B)$, we conclude that
$K_2$ is contained in the intersection of the minimal primes of $B$,
that is, $K_2\subseteq\nl(B)$.
This contradicts condition~\eqref{item3} from Lemma~\ref{lem1}.
Thus, the claim is established.

By condition~\eqref{item11} above, for $j=1,2$ we can write $K_j=P_{j,1}\cap\cdots\cap P_{j,n_j}$
where each $P_{j,k}$ is prime.
Re-order the $P_{j,k}$ if necessary to assume that $P_{j,1},\ldots,P_{j,t_j}\in\min(B)$
and $P_{j,t_j+1},\ldots,P_{j,n_j}\notin\min(B)$.
Note that the claim above implies that $t_j\geq 1$.
For $j=1,2$ set $L_j=P_{j,1}\cap\cdots\cap P_{j,t_j}\supseteq K_j$.
By definition of $L_j$, we have condition~\eqref{item7} from the statement of the lemma.
Furthermore, the condition $L_j\supseteq K_j$ implies that
$$\Ht(L_1+L_2)\geq\Ht(K_1+K_2)\geq 2$$
and $L_j\not\subseteq\nl(B)$,
so conditions~\eqref{item4} and~\eqref{item6} are satisfied.

We conclude the proof by verifying condition~\eqref{item5}. Since each $L_j$ is an intersection of  primes of $B$,
we have $L_j\supseteq \nl(B)$, and hence $L_1\cap L_2\supseteq\nl(B)$.
To show the reverse containment, let $Q\in\min(B)$; it suffices to show that
$L_1\cap L_2\subseteq Q$. We know that
\begin{align*}
P_{1,1}\cap\cdots\cap P_{1,n_1}\cap P_{2,1}\cap\cdots\cap P_{2,n_2}
&=K_1\cap K_2
\subseteq\nl(B)\subseteq Q.
\end{align*}
Thus, we have $P_{j,k}\subseteq Q$ for some $j,k$. Since $Q$ is minimal, we must have $P_{j,k}=Q$ so $k\leq t_j$.
From this, we have $L_j=P_{j,1}\cap\cdots\cap P_{j,t_j}\subseteq P_{j,k}=Q$, as desired.
\end{proof}

\begin{lemma}\label{prop1}
Let $B$ be a complete local equidimensional ring. Assume that the $S_2$-ification of $B$ is not local.
Then there is a prime ideal $P$ of $B$ such that $\Ht P \geq 2$ and $\Spec^\circ(B_P)$ is disconnected.
\end{lemma}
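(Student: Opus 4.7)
The plan is to use Lemma \ref{lem2} to produce the ideals $L_1,L_2\subseteq B$ and then localize at a well-chosen prime $P$ containing $L_1+L_2$, turning the global topological disconnectedness encoded by $L_1$ and $L_2$ into disconnectedness of $\Spec^\circ(B_P)$.

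First, I would choose $P$ to be a minimal prime over $L_1+L_2$; since the height of an ideal equals the minimum of the heights of its minimal primes, I can arrange $\Ht P=\Ht(L_1+L_2)\geq 2$ by condition~\eqref{item4} of Lemma~\ref{lem2}. Localizing at $P$, I would then consider the two closed sets
\[
C_j=V(L_jB_P)\cap\Spec^\circ(B_P),\qquad j=1,2.
\]
The standard ideal-theoretic identities $V(\fa)\cap V(\fb)=V(\fa+\fb)$ and $V(\fa)\cup V(\fb)=V(\fa\cap\fb)$, together with condition~\eqref{item5} and the fact that nilpotents localize (so $\nl(B)B_P=\nl(B_P)$), give
\[
V(L_1B_P)\cup V(L_2B_P)=V(\nl(B_P))=\Spec(B_P),
\]
while minimality of $P$ over $L_1+L_2$ yields
\[
V(L_1B_P)\cap V(L_2B_P)=V((L_1+L_2)B_P)=\{PB_P\}.
\]
Restricting to the punctured spectrum, $C_1$ and $C_2$ are closed, disjoint, and cover $\Spec^\circ(B_P)$.

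The remaining task, which I expect to be the only subtle step, is to verify that $C_1$ and $C_2$ are each non-empty, so that the decomposition is genuinely a disconnection. Here I would invoke condition~\eqref{item7}: write $L_j=P_{j,1}\cap\cdots\cap P_{j,t_j}$ with each $P_{j,k}\in\Min(B)$ and $t_j\geq 1$. Since $L_j\subseteq L_1+L_2\subseteq P$ and $P$ is prime, at least one $P_{j,k}$ lies in $P$. Because $P_{j,k}$ is minimal we have $\Ht P_{j,k}=0<2\leq\Ht P$, so $P_{j,k}\subsetneq P$, and hence $P_{j,k}B_P$ is a prime of $B_P$ lying in $\Spec^\circ(B_P)$ that contains $L_jB_P$ (the latter because $L_j\subseteq P_{j,k}$). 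Thus $P_{j,k}B_P\in C_j$ and each $C_j$ is non-empty, completing the proof that $\Spec^\circ(B_P)$ is disconnected.

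The potential obstacle is the combinatorial bookkeeping around minimal primes: one must use condition~\eqref{item7} to know both that $L_j$ is contained in \emph{some} minimal prime of $B$ (to get $C_j\neq\emptyset$) and that every such minimal prime of $B$ contained in $P$ is strictly below $P$ (to keep the witness out of the closed point). Both of these are immediate from $\Ht P\geq 2$ and the fact that the $P_{j,k}$ are minimal, so no serious difficulty arises; the argument reduces the problem neatly to the topological decomposition already extracted in Lemma~\ref{lem2}.
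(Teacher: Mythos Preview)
Your proposal is correct and follows essentially the same approach as the paper: choose $P$ minimal over $L_1+L_2$, show that the closed sets $V(L_jB_P)\cap\Spec^\circ(B_P)$ cover the punctured spectrum via condition~\eqref{item5}, are disjoint by minimality of $P$, and are non-empty via condition~\eqref{item7}. The only cosmetic difference is that the paper verifies the covering and disjointness elementwise (taking a prime $QB_P$ and checking which $L_j$ it contains), whereas you phrase the same computations via the identities $V(\fa)\cup V(\fb)=V(\fa\cap\fb)$ and $V(\fa)\cap V(\fb)=V(\fa+\fb)$.
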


\begin{proof}
Let $L_1,L_2$ be as in Lemma~\ref{lem2}, and let $P$ be  minimal in $V(L_1+L_2)$.
It follows that $\Ht P \geq\Ht(L_1+L_2)\geq 2$.
We show that $\Spec^\circ(B_P)$ is disconnected.
For $j=1,2$ we set 
$$V^\circ(L_jB_P)=V(L_jB_P)\cap\Spec^\circ(B_P)=V(L_jB_P)-\{PB_P\}.$$
Since we are using the subspace topology on 
$\Spec^\circ(B_P)$ induced from the Zariski topology on $\Spec(B_P)$,
the sets $V^\circ(L_jB_P)$ are closed in $\Spec^\circ(B_P)$.
We show that the closed sets $V^\circ(L_jB_P)$ give a disconnection of $\Spec^\circ(B_P)$.

Claim 1: $\Spec^\circ(B_P)=V^\circ(L_1B_P)\cup V^\circ(L_2B_P)$.
The containment $\supseteq$ is by definition of $V^\circ(L_jB_P)$.
For the reverse containment, let $QB_P\in\Spec^\circ(B_P)$.
Thus, we have $Q\in\Spec(B)$ and thus
$L_1L_2\subseteq L_1\cap L_2=\nl(B)\subseteq Q$, by condition~\eqref{item5} of Lemma~\ref{lem2}.
It follows that $L_j\subseteq Q$ for some $j$,
hence $L_jB_P\subseteq QB_P$, so 
$$QB_P\in V(L_jB_P)\cap \Spec^\circ(B_P)=V^\circ(L_jB_P)\subseteq V^\circ(L_1B_P)\cup V^\circ(L_2B_P)$$
as desired.

Claim 2: $V^\circ(L_1B_P)\cap V^\circ(L_2B_P)=\emptyset$.
By way of contradiction, suppose that $V^\circ(L_1B_P)\cap V^\circ(L_2B_P)\neq\emptyset$,
and let $QB_P\in V^\circ(L_1B_P)\cap V^\circ(L_2B_P)$.
It follows that $Q\subsetneq P$ and
$L_jB_P\subseteq QB_P$ for $j=1,2$.
It follows that
$$L_j\subseteq B\cap L_jB_P\subseteq B\cap QB_P=Q\subsetneq P$$
for $j=1,2$. (Given an ideal $I\leq B$, we use the notation $B\cap IB_P$ to denote the contraction of $IB_P$ along the 
natural map $B\to B_P$.)
From this, we have $L_1+L_2\subseteq Q\subsetneq P$, contradicting the fact that $P$ is minimal in $V(L_1+L_2)$.

To complete the proof, it suffices to show that $V^\circ(L_jB_P)\neq\emptyset$ for $j=1,2$.
For this, it suffices to show that there is a prime $P_{j,k}$ in $B$ such that $L_j\subseteq P_{j,k}\subsetneq P$.
Using condition~\eqref{item7} of Lemma~\ref{lem2}, we have
\begin{align*}
P_{j,1}\cap\cdots\cap P_{j,t_j}
&=L_j\subseteq L_1+L_2\subseteq P
\end{align*}
so we find that $P_{j,k}\subseteq P$ for some $k$.
By construction, we have 
$$L_j=P_{j,1}\cap\cdots\cap P_{j,t_j}\subseteq P_{j,k}\subseteq P.$$
Moreover, we have $\Ht(P_{j,k})=0<2=\Ht P$, so $P_{j,k}\subsetneq P$, as desired.
\end{proof}

{\it Conclusion of proof of Theorem \ref{thmtwo}.}  We have reduced the theorem to the case where $A$ is complete with separably closed residue field and contains a primitive $e$-th root of unity.  Set $R$ to be the truncated symbolic Rees algebra as in equation \eqref{eq121019a}.  It only remains to show that $\bar R=R/IR$ satisfies any of the equivalent conditions in Fact \ref{HH}.  The result will then follow from Theorem \ref{thmone}.

We will show that the $S_2$-ification of $\bar R$ is local.
To this end, note that $R$ is a local complete domain, hence equidimensional; see Lemma \ref{Rlocal}.  Set $n = \dim A = \dim R$, where the equality is from the fact that $R$ is finite over $A$.  

By way of contradiction, suppose the $S_2$-ification of $\bar R$ is not local.  Lemma~\ref{prop1} provides a prime ideal  $\bar P=P/IR\subset\bar R$ of height at least two such that $\Spec^{\circ}(\bar R_{\bar P})$ is disconnected. 
Now $R_P$ has a canonical module (since  it is a homomorphic image of a  regular local ring).  Moreover, it is $(S_2)$, therefore $\H_{PR_P}^{\dim R_P}(R_P)$ is indecomposable by \cite[Theorem 3.7]{HH}.  

\underline{Case 1.}  If $P$ is maximal in $R$, then $\bar R_{\bar P} = \bar R$, and $\mu(IR_P) \leq \mu(I) \leq n-2 = \dim R - 2 = \dim R_P - 2$, by condition (ii) of the hypotheses.  Thus, $\H_P^n(R)$ is indecomposable.  By \cite[Theorem 3.3]{HH}, $\Spec^{\circ}(\bar R_{\bar P})=\Spec^{\circ}(\bar R)$ is connected, contradicting our choice of $\bar P$.

\newcommand{\x}{\mathbf x}
\newcommand{\y}{\mathbf y}

\underline{Case 2.} If $P$ is not maximal in $R$, then $\frak p = P \cap A$ is not maximal in $A$ since $R$ is finite over $A$. Furthermore, we have $\frak p \supseteq IR \cap A \supseteq I$ since $P \supseteq IR$.   Therefore $I_{\fp}$ is generated by an $A_{\fp}$-regular sequence $\x=x_1,\ldots,x_c\in I_{\fp}$ by condition (iii) of the hypotheses.   

We claim that $\x$ is part of a system of parameters for $R_P$. (Then \cite[Theorem 3.9(c)]{HH} implies that $\Spec^{\circ}(\bar R_{\bar P})$ is connected, 
again contradicting our choice of $\bar P$.) Let $\x'=x_1,\ldots,x_c,\ldots,x_d\in \fp_{\fp}$ be a system of parameters for $A_{\p}$. 
It follows that $A_{\fp}/(\x')$ has finite length. Since $R_{\fp}$ is  finitely generated over $A_{\p}$, it follows that $R_{\fp}/(\x')$ has finite length,
that is, $R_{\fp}/(\x')$ is artinian. The ring $R_{P}/(\x')$ is a localization of $R_{\fp}/(\x')$, so it is also artinian.
Thus, the fact that the sequence $\x'\in P_P$ has length $d=\dim(A_{\fp})=\dim(R_P)$ by Lemma~\ref{AtoRetalecodimone}
implies that $\x'$ is a system of parameters for $R_P$, as claimed.

Thus, the $S_2$-ification of $\bar R$ is local.  The result now follows from Theorem \ref{thmone}.\qed

\section{Corollaries and an Example}
\label{sec130110b}
We begin this section by partially recovering~\cite[Theorem 1.2]{GW}, as  described in the discussion before Theorem \ref{thmtwo} in the introduction.

\begin{corollary}  \label{cor0401} Let $(A, \frak m)$ be an excellent, local, normal domain and $I$ a prime ideal of $A$ generated by an $A$-regular sequence such that $\bar A = A/I$ is normal.  For any integer $e > 1$ which represents a unit in $A$, the kernel of the homomorphism $\Cl(A) \to \Cl(\bar A)$ contains no element of order $e$.
\end{corollary}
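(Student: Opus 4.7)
Set $c=\mu(I)$ and $d=\dim A$. Since $I$ is generated by an $A$-regular sequence $x_1,\ldots,x_c$, we have $\Ht I=c$ and $\dim\bar A=d-c$; the Koszul complex on this sequence is a finite free resolution of $\bar A$, so $\pd_A\bar A<\infty$ and hence $\pd_A I<\infty$. The plan is to split on the value of $d-c$, invoking Theorem~\ref{thmtwo} in the main case and observing that $A$ is already regular in the two boundary cases.

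In the main case $d-c\geq 2$, I will verify the three hypotheses of Theorem~\ref{thmtwo}: condition~(i) is given by assumption; condition~(iii) is exactly the inequality $c\leq d-2$; and for condition~(ii), for any prime $\fp\neq\fm$ with $I\subseteq\fp$ the sequence $x_1,\ldots,x_c$ localizes to an $A_\fp$-regular sequence generating $I_\fp$ (regular sequences are preserved by the flat map $A\to A_\fp$), while if $I\not\subseteq\fp$ then trivially $I_\fp=A_\fp$. Theorem~\ref{thmtwo} then yields the conclusion.

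For the boundary cases $d-c\in\{0,1\}$, my plan is to argue that $A$ itself must be regular, so that $\Cl(A)=0$ and the corollary holds vacuously. If $d-c=0$, then the prime $I$ has height $d=\dim A$, forcing $I=\fm$; hence $\fm$ is generated by a regular sequence of length $d$ and $A$ is regular by definition. If $d-c=1$, then $\bar A$ is a normal local domain of dimension one, hence a DVR; lifting a uniformizer $\bar y\in\bar A$ to $y\in\fm$ gives $\fm=I+(y)=(x_1,\ldots,x_c,y)$, so $\mu(\fm)\leq c+1=d$ and $A$ is again regular. The only step requiring care is the case split itself---bridging the gap between Theorem~\ref{thmtwo}, which demands $\mu(I)\leq\dim A-2$, and the corollary, which imposes no such bound---but the elementary regularity arguments above dispose of the excluded range, so no serious obstacle is anticipated.
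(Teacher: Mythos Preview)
Your proof is correct and follows essentially the same strategy as the paper: split on whether $\dim\bar A\geq 2$ or $\dim\bar A\leq 1$, apply Theorem~\ref{thmtwo} in the former case, and show $A$ is regular (hence $\Cl(A)=0$) in the latter. The only minor difference is that the paper handles the low-dimensional cases in one stroke by invoking the descent result~\cite[Theorem 6.1(1)]{AFHa} to pass from regularity of $\bar A$ to regularity of $A$, whereas you give a direct elementary count of generators of $\fm$; both arguments are valid.
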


\begin{proof} 
\underline{Case 1.} $\dim(A/I)\leq 1$. In this case, the fact that $A/I$ is normal (hence, it satisfies $(R_1)$) implies that $A/I$ is regular. Since $I$ is generated by an $A$-regular sequence, it follows from~\cite[Theorem 6.1(1)]{AFHa} that
$A$ is regular, so the result follows in this case. 

\underline{Case 2.} $\dim(A/I)\geq 2$. In this case, we have $\mu(I)=\dim(A)-\dim(A/I)\leq\dim(A)-2$ since $I$ is generated by an $A$-regular sequence.
Thus, the desired conclusion follows from Theorem \ref{thmtwo} in this case.
\end{proof}

The next two results follow from Theorem \ref{thmtwo}, as in \cite{GW}.

\begin{corollary} \label{cor0402}
(See \cite[Corollary 1.3]{GW}) Under the same assumptions as in Theorem \ref{thmtwo}, with the additional hypothesis that $A$ is $\mathbb Q$-algebra, the kernel of the homomorphism $\Cl(A) \to \Cl(\bar A)$ is torsion free.
\end{corollary}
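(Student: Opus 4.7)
The plan is to derive this corollary as an immediate consequence of Theorem~\ref{thmtwo} by exploiting the hypothesis that $A$ is a $\mathbb{Q}$-algebra. The key observation is that in a $\mathbb{Q}$-algebra every positive integer $e>1$ is automatically a unit, so the hypothesis of Theorem~\ref{thmtwo} on ``$e$ representing a unit in $A$'' becomes vacuous and the conclusion applies uniformly to every integer $e>1$.

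Concretely, I would argue by contradiction. Suppose that $\ker(\Cl(A)\to\Cl(\bar A))$ contains a nontrivial torsion element $[\fa]$, and let $e$ be its order in $\Cl(A)$, so $e>1$. Because $A$ contains $\mathbb{Q}$, the integer $e$ is a unit in $A$. All the remaining hypotheses of Theorem~\ref{thmtwo} hold by assumption, and therefore Theorem~\ref{thmtwo} yields that $\ker(\Cl(A)\to\Cl(\bar A))$ contains no element of order $e$. This contradicts the existence of $[\fa]$, so no nontrivial torsion element can exist and $\ker(\Cl(A)\to\Cl(\bar A))$ is torsion free.

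There is essentially no obstacle here: the entire content of the corollary is packaged inside Theorem~\ref{thmtwo}, and the $\mathbb{Q}$-algebra hypothesis simply removes the unit restriction on the exponent $e$. The proof in the paper should amount to one short paragraph citing Theorem~\ref{thmtwo}.
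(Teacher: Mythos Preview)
Your proposal is correct and matches the paper's approach exactly: the paper does not even write out a proof, merely stating that the result ``follows from Theorem~\ref{thmtwo}, as in \cite{GW},'' which is precisely the argument you give. The $\mathbb{Q}$-algebra hypothesis makes every integer $e>1$ a unit, so Theorem~\ref{thmtwo} rules out torsion elements of every possible order.
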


\begin{corollary} \label{cor0403}
(See \cite[Corollary 1.4]{GW}) If the local hypothesis on $A$ is removed in Theorem \ref{thmtwo}, and instead it is assumed that $\operatorname{Pic}(A)=0$, and $I$ is a prime ideal of $A$ in the Jacobson radical of $A$ with finite projective dimension satisfying the three conditions of Theorem \ref{thmtwo}, then for any integer $e > 1$ representing a unit in $A$, the kernel of the homomorphism $\Cl(A) \to \Cl(\bar A)$ contains no element of order $e$.
\end{corollary}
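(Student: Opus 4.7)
The plan is to reduce to the local setting of Theorem~\ref{thmtwo} by localizing at a well-chosen maximal ideal of $A$. Suppose for contradiction that $[\fa] \in \ker(\Cl(A) \to \Cl(\bar A))$ has order $e$.

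The first step is to locate a maximal ideal $\fm$ of $A$ and a prime divisor $p \mid e$ such that the localized class $[\fa A_\fm]$ has order divisible by $p$ in $\Cl(A_\fm)$. To this end, I would set $E$ to be the least common multiple of the orders of $[\fa A_\fm]$ over all maximal ideals $\fm$ of $A$. Each such order divides $e$, so $E \mid e$. Conversely, the reflexive rank one module $\fa^{(E)}$ is locally principal at every maximal ideal, hence represents a class in $\Pic(A)$, which vanishes by hypothesis. Thus $[\fa^{(E)}] = 0$ in $\Cl(A)$, forcing $e \mid E$. Hence $E = e$, and for every prime $p \mid e$ some maximal ideal $\fm$ has $p$ dividing $n := \mathrm{ord}_{\Cl(A_\fm)}([\fa A_\fm])$. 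Fix such a pair; then $[\fa^{(n/p)} A_\fm]$ has order exactly $p$ in $\Cl(A_\fm)$, and functoriality of the induced map on class groups places it in the kernel of $\Cl(A_\fm) \to \Cl(\bar A_\fm)$.

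Next I would verify that the triple $(A_\fm, I_\fm, p)$ satisfies the hypotheses of Theorem~\ref{thmtwo}. The ring $A_\fm$ is an excellent local normal domain; the ideal $I_\fm$ is a prime of finite projective dimension with normal quotient $\bar A_\fm$; the integer $p$ is a unit in $A_\fm$ since $p \mid e$; and condition (ii) descends to $A_\fm$ because each non-maximal prime of $A_\fm$ corresponds to a non-maximal prime $\fp \subsetneq \fm$ of $A$, using $I \subseteq \fm$ since $I$ lies in the Jacobson radical. Condition (iii) transfers to $A_\fm$ when read pointwise at each maximal ideal containing $I$. Applying Theorem~\ref{thmtwo} then rules out the existence of the constructed order-$p$ class, yielding the desired contradiction.

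The main obstacle is the first step: leveraging $\Pic(A) = 0$ to guarantee a maximal ideal at which localization does not destroy the $p$-torsion of $[\fa]$ for some prime $p \mid e$. Once that is secured, the reduction to the local case is essentially formal, modulo careful interpretation of condition (iii) of Theorem~\ref{thmtwo} at each maximal ideal containing $I$.
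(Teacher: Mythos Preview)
Your proposal is correct and follows the standard localization argument that the paper intends (it defers the proof to \cite[Corollary~1.4]{GW}): use $\Pic(A)=0$ to find a maximal ideal $\fm$ where the torsion survives, then apply Theorem~\ref{thmtwo} to $A_\fm$. Your caveat about condition~(iii) is well-placed---the non-local statement should be read as requiring $\mu(I_\fm)\leq\dim A_\fm-2$ at each maximal ideal $\fm\supseteq I$, and with that reading the reduction goes through exactly as you outline.
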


We conclude with an example where the hypotheses of Theorem \ref{thmtwo} are satisfied by an ideal $I$ that is not generated by an $A$-regular sequence.

\begin{example}\label{ex120710a}
Let $(A, \fm)$ be an excellent normal 
\newcommand{\depth}{\operatorname{depth}}
local integral domain, and let the positive integer $e$ represent a unit in A. Assume that $A$ has an $A$-regular sequence $f_1,\ldots, f_6 \in \fm$
such that $A/(f_1,\ldots, f_6)A$ is a normal domain. (Examples of such rings are constructed in~\cite[Example 3.4]{SWS}.) In particular, we have $6 \leq \depth A \leq \dim A$. Localize at a minimal prime of the ideal $J = (f_1,\ldots, f_6)A$, if necessary, to assume that $J$ is $\fm$-primary. It follows that $A$ is Cohen-Macaulay of dimension 6. Arrange the sequence $f_1,\ldots, f_6$ in a $2\times 3$ matrix $F=\left(\begin{smallmatrix}f_1&f_2&f_3\\f_4&f_5&f_6\end{smallmatrix}\right)$, and consider the ideal $I=I_2(F)$ generated by the $2\times 2$ minors $d_1=f_1f_5-f_2f_4$, $d_2=f_2f_6-f_3f_5$, $d_3=f_1f_6-f_3f_4$. 

We claim that $A$ and $I$ satisfy the hypotheses of Theorem~\ref{thmtwo}. The proof of~\cite[Proposition 3.1]{SWS} shows that $I$ is a height-2 prime ideal of $A$ of finite projective dimension generated by $3$ elements (so $\mu(I)<\dim A-2$) such that $A/I$ is a normal domain. Thus, we need only show that $I$ is a complete intersection on the punctured spectrum. Let $\fp\in\Spec^{\circ}(A)$. We need to assume that $I\subseteq\fp$ and show that $IA_{\fp}$ is generated by an $A_{\fp}$-regular sequence.

Since $J$ is $\fm$-primary and $\fp\neq \fm$, we have $J\not\subseteq\fp$, so some generator of $J$ is not in $\fp$. By symmetry, assume that the generator $f_1$ is not in $\fp$. It follows that $f_1$ is a unit in $A_\fp$. Thus, a routine computation shows that in $A_{\fp}$ we have
$$d_2=f_2f_6-f_3f_5=\frac{f_2}{f_1}d_3-\frac{f_3}{f_1}d_1$$
so we conclude that $IA_{\p}=(d_1,d_3)A_{\p}$. Since $A_{\p}$ is Cohen-Macaulay and $\Ht(IA_{\fp})=2$, it follows that the sequence
$d_1,d_3$ is $A_{\p}$-regular (e.g., by~\cite[Theorem 17.4]{M}), as desired.
\end{example}

\section*{Acknowledgments}

The authors would like to thank Phillip Griffith for helpful conversations about \cite[Theorem 1.2]{GW}.


\providecommand{\bysame}{\leavevmode\hbox to3em{\hrulefill}\thinspace}
\providecommand{\MR}{\relax\ifhmode\unskip\space\fi MR }
\providecommand{\MRhref}[2]{%
  \href{http://www.ams.org/mathscinet-getitem?mr=#1}{#2}
}
\providecommand{\href}[2]{#2}

\end{document}